\newtheorem{theorem}{Theorem}[section]
\newtheorem{lemma}[theorem]{Lemma}
\newtheorem{example}[theorem]{Example}
\newtheorem{examples}[theorem]{Examples}
\numberwithin{equation}{section}
\def\vvvert{\hbox{\ensuremath{|\hspace{-0.16em}|\hspace{-0.16em}|}}}
\newcommand{\Norm}[3][{\vphantom 1}]%
  {\vvvert #2 \vvvert_{#3}^{#1}}
\newcommand {\Y}{{\mathcal Y}}
\newcommand {\kB}{{\mathcal B}}
\newcommand {\kC}{{\mathcal C}}
\newcommand {\kO}{{\mathcal O}}
\newcommand {\kL}{{\mathcal L}}
\newcommand {\kZ}{{\mathcal Z}}
\newcommand {\kH}{{\mathcal H}}
\newcommand {\kU}{{\mathcal U}}
\newcommand {\kY}{{\mathcal Y}}
\newcommand {\kX}{{\mathcal X}}
\newcommand {\1}{{\mathbbm{1}}}
\newcommand{\kCb}{\kC_{\operatorname{b}}}
\newcommand{\interior}{\operatorname{int}}
\newcommand {\D}{{\mathrm D}}
\renewcommand{\d}{{\mathrm d}}
\renewcommand{\i}{{\mathrm i}}
\newcommand {\e}{{\mathrm e}}
\newcommand {\spec}{\operatorname{spec}}
\renewcommand{\a}{{\mathsf a}}
\renewcommand{\b}{{\mathsf b}}
\renewcommand{\c}{{\mathsf c}}
\newcommand  {\R}{{\mathbb R}}
\newcommand  {\N}{{\mathbb N}}
\newcommand  {\C}{{\mathbb C}}
\newcommand  {\Z}{{\mathbb Z}}
\renewcommand{\P}{{\mathbb P}}
\newcommand  {\Q}{{\mathbb Q}}
\newcommand{\ip}[3][{\vphantom 1}]{\langle #2 \rangle_{#3}^{#1}}
\newcommand {\id}{\operatorname{id}}
\newcommand{\norm}[3][{\vphantom 1}]{\lVert #2 \rVert_{#3}^{#1}}
\begin{document}
\title[]
{Exponential Runge Kutta time semidiscetizations with low regularity initial data}

\author[C. Wulff]{Claudia Wulff}
\address[C. Wulff]%
{Department of Mathematics \\
 University of Surrey \\
 Guildford GU2 7XH \\
 UK}
\email{c.wulff@surrey.ac.uk}

\date{\today}

\begin{abstract}
  We  apply exponential Runge Kutta time discretizations
   to  semilinear evolution equations $ \frac {\d U}{\d t}=AU+B(U)$ posed
  on a Hilbert space $\kY$. Here $A$ is normal and generates a
  strongly continuous semigroup, and   $B$ is  assumed to be a 
smooth nonlinearity
  from $\kY_\ell = D(A^\ell)$ to itself, and  
$\ell \in I \subseteq [0,L]$, $L \geq 
  0$, $0,L \in I$.  In particular the  semilinear wave equation and nonlinear
  Schr\"odinger equation with periodic boundary
  conditions or posed on $\R^d$ fit into this framework. 
  We prove convergence  of order $O(h^{\min(\ell,p)})$
 for non-smooth initial data $U^0\in \kY_\ell$,  where $\ell >0$, for a method of classical order $p$. 
  We show in an example of an exponential Euler discretization of a linear evolution equation that our 
  estimates are sharp,  and corroborate this in numerical experiments for a semilinear wave equation. 
To prove our result we  Galerkin truncate the semiflow and numerical method and   balance the Galerkin truncation  error with the  error of the time discretization of the 
   projected system.
  We also extend these results to exponential Rosenbrock methods.

\noindent
{\sc Keywords:} Semilinear evolution equations, exponential integrator semidiscretizations in
time, fractional order of convergence. exponential Rosenbrock methods.\\
{\sc AMS subject classification:} 65J08, 65J15, 65M12, 65M15.
\end{abstract}

\maketitle

%\tableofcontents
%%%%%%%%%%%%%%%%%%%%%%%%%%%%%%%

\section{Introduction}
\label{sec:intro}
 
We analyze the convergence of exponential Runge Kutta  time
semidiscretizations of the semilinear evolution equation
\begin{equation}
  \frac{\d U}{\d t} = AU + B(U)
  \label{eqn:see}
\end{equation}
for low regularity initial data $U(0)=U^0$.   As in \cite{Wulff2016} we assume that (\ref{eqn:see}) is
posed on a Hilbert space $\kY$, $A$ is a normal linear operator  
that generates a strongly continuous semigroup, and that $B$ is smooth on a scale
 of Hilbert spaces $\{\kY_\ell\}_{\ell \in I}$, $I \subseteq [0,L]$, $0,L \in I$,
 see  condition (B)
below.  Here $\kY_\ell=D(A^\ell)\subseteq \kY$,
$\ell \geq 0$.
This condition is for example satisfied for the  semilinear wave equation and the nonlinear Schr\"odinger
equation in periodic domains or the full space with smooth nonlinearities, but, for $\ell>0$, poses additional restrictions in the case of other boundary conditions, see   \cite{Wulff2016}.

 Existence of the semiflow of (\ref{eqn:see}) is shown in \cite{Pazy}.  We discretize \eqref{eqn:see} in time by a possibly implicitly defined exponential Runge Kutta method of the class considered in  \cite{Owren} which, as we show,  is well-defined on $\kY$. At the end of the paper, in Section \ref{s.exp_Rosenbrock}, we also study exponential Rosenbrock methods as introduced in  \cite{SchweitzerHochbruck}.

 Given a time $T>0$  we prove an  order of convergence $\kO(h^\ell)$
   in the $\kY$ norm for the  time-semidiscretization up to time $T$  for any 
 solution  $U(t)$ of \eqref{eqn:see} with a given $\kY_\ell$ bound,  $\ell \in I$,
 for  $0<\ell \leq p$.  Here $\ell>0$ is such
that  $\ell-k \in I$ for $k=1,\ldots , \lfloor \ell \rfloor$ (the greatest integer $\leq \ell$),  and 
 $p$ is the order of the exponential integrator, i.e., the   order of the integrator if $A$ in \eqref{eqn:see}  is a bounded operator (e.g., if $\dim \kY<\infty$ so that  \eqref{eqn:see} is an ODE).
% For $\ell\geq p$ we prove  full order of convergence $\kO(h^p)$.
We show in an example of a linear evolution equation that this estimate is sharp (Example \ref{ex.EstSharp}) and  for a semilinear wave equation we demonstrate
numerical evidence as well, cf. Figure \ref{fig:expEuler_swe} below.
 
 We follow the  same strategy as in \cite{Wulff2016} where we proved an order of convergence $O(h^{\ell p/(p+1)})$ for A-stable Runge Kutta time semidiscretizations applied to semilinear evolution equations \eqref{eqn:see} for initial data in $Y_\ell$, $0<\ell<p+1$: our approach is to apply a spectral
Galerkin truncation to the  evolution equation
\eqref{eqn:see} and to   estimate   the  error of the time discretization of the projected evolution equation in terms
of the accuracy of the projection. We then balance  this error with the projection error to obtain an estimate
for the error of the time semi-discretization.

Related results in the literature are as follows:    in \cite{Hochbruck} full order
of convergence is shown for explicit exponential Runge Kutta methods (and other classes of explicit exponential integrators) in the case of  sufficiently smooth solutions  $t \to U(t)$ and/or nonlinearities $t \to B(U(t))$ and sectorial operators  $A$ under suitable order conditions.  
%  Exponential  Rosenbrock methods were   introduced in \cite{SchweitzerHochbruck}.
 In \cite{Ostermann_expBseries} order conditions for 
 smooth solutions of exponential Runge Kutta and
exponential Rosenbrock methods are derived. In \cite{Schweitzer} the author considers the exponential Euler Rosenbrock method
applied to a parabolic PDE for non-smooth initial data and proves (in general) fractional  order of convergence under certain smoothness assumptions of derivatives of the nonlinearity evaluated at the continuous solution. 
In \cite{Gauckler2015} the author studies trigonometric integrators applied to the
semilinear wave equation with  polynomial or analytic nonlinearity
on $S^1 = \R/\Z$ and proves error estimates for non-smooth initial data under
some conditions on the filter functions of the method. 
In \cite{OstermannBoussinesq2019} the authors carry out a coodinate transformation on the ``good'' Boussinesq equation that transforms it into a semilinear PDE with bounded linear part and design exponential integrators for the transformed PDE which, due to the special structure of the PDE, yields
higher convergence estimates than our results for non-smooth initial data in the original coordinates. 

 The order of convergence of splitting methods applied to semilinear evolution equations is studied in \cite{LubichNSE}, \cite{ConvAnalysisHigherOrdSplitSchroe2012},  \cite{Ostermann2014},c, see also references therein. In  \cite{LubichNSE} a  second order Strang splitting is applied to the nonlinear Schr\"odinger equation on $\R^3$  and convergence in the Sobolev  norm $\kH_2$  of order $1$ and in the $\kL_2$ norm of order $2$ is shown for initial data in the Sobolev space $\kH_4$. In \cite{ConvAnalysisHigherOrdSplitSchroe2012} the author studies convergence of  high order time splitting methods  with pseudospectral space discretizations of nonlinear Schr\"odinger equations where the nonlinearity has the form $B(U)U$
 and obtains full order in time convergence and high order spatial convergence for smooth initial data. In her analysis she uses   fractional order spaces $\kY_\ell$ as we do. 
 In  \cite{Ostermann2014} a Strang splitting is applied to the Vlasov-Poisson equation and full order of convergence is shown for smooth initial data.  In \cite{Gauckler2017} a Lie-Trotter time and Fourier space discretization is applied to the Zakharov system and convergence of order $1$ in time is proved under a CFL condition.

%%%%%%%%%%%%%%%%%%%%%%%%%%%%%%%%%%%%%%%%%%%%%%%%%%%%%%%%%%%%%%%%%%%%%%%%%%%%%%%%%%%%%%%%%%%%%%%%%%%

\section{Semilinear PDEs on a scale of Hilbert spaces}
\label{sec:hpde}
 
In this section we list our assumptions on  the semilinear evolution equation \eqref{eqn:see}.  These are the same as in \cite{Wulff2016}.
 %We review results from \cite{Pazy,OW10A} on the local well-posedness and regularity of solutions of  \eqref{eqn:see}  and give examples.

 We make the following assumptions
on the semilinear evolution equation \eqref{eqn:see}:

\begin{enumerate}
\item[(A1)]
 $A$ is a normal linear operator on $\kY$ that generates a
  strongly continuous semigroup of linear operators $e^{tA}$ on $\kY$
  in the sense of \cite{Pazy}.
\end{enumerate}
We define   $\kY_\ell = D(A^\ell )$, $\ell \geq
0$, $\kY_0=\kY$. 
For $m>0$ 
we define $\P_m$ to be the spectral projection of $A$ to $\spec(A) \cap \kB^m_\C(0)$,
 and let
$\Q_m=\id-\P_m$.
Here for a normed    space $\kX$ we let
\begin{equation*}
  \kB^R_{\kX}(U^0)=\{U\in\kX:\norm{U-U^0}{\kX}\leq R\}.
\end{equation*}
% %be the closed ball of radius $R$ around $U^0$ in $\kX$.
 We endow $\kY_\ell$ with the inner
product
\begin{equation}
  \ip{U_1,U_2 }{\kY_\ell}=\ip{\P_1 U_1,\P_1 U_2}{\kY} + \ip{|A|^\ell\Q_1 U_1,|A|^\ell\Q_1 U_2}{\kY},
  \label{eqn:inner_prod}
\end{equation}

In the following for $\ell \in \R$ let $\lfloor\ell\rfloor = \max\{n \in \N_0: n \leq \ell\}$ and  $\lceil \ell \rceil= \min\{n \in \N_0: n \geq \ell\}$.
Moreover for $R>0$ and $\ell\geq 0$ we abbreviate
$
 \kB_\ell^{R} = \kB_{\kY_\ell}^R(0).
$
We make the following assumption for the nonlinearity $B(U)$ of
\eqref{eqn:see}.

\begin{enumerate}
\item[(B)]
There exists  $L\geq 0$, $I \subseteq [0,L]$,
$0,L \in I$,
$N\in \N$, $N > \lceil L\rceil $,  such that $B\in
  \kCb^{N- \lceil \ell \rceil }(\kB^R_\ell;\kY_\ell)$ for
all  $\ell\in I$ and $R>0$.
  \label{enum:B}
\end{enumerate}
Here  for Banach spaces $\kX$, $\kZ$,  $\kU \subseteq \kX$, we denote by
 $\kCb^k(\kU,\kZ)$ the set of $k$ times
continuously differentiable functions $F \colon \interior\kU \to \kZ$ 
 such that $F$ and its derivatives $\D^i F$ are bounded as 
maps from  the interior $\interior\kU$ of $\kU$ to
the  space of $i$-multilinear bounded maps from $\kX$ to $\kZ$
 and extend continuously to the boundary of $\interior\kU$ for $i\leq k$.
We set $\kCb(\kU,\kZ)= \kCb^0(\kU,\kZ)$.
%Note that if $\dim \kX=\infty$, there are examples of continuous functions $F:\kU\to \kZ$ where $\kU$ is closed and bounded, which do not lie in $\kCb(\kU,\kZ)$, see e.g. \cite[Remark 2.3]{OW10A}.

We denote the supremum  of
$B:\kB_\ell^R\to\kY_\ell$ as $M_\ell[R]$ and the supremum of its   $k$th derivative as
$M_\ell^{(k)}[R]$ and set $M'_\ell[R]= M^{(1)}_\ell[R]$, $M[R] =M_0[R] $ and $M'[R] =M'_0[R]$.  
Moreover we define
\begin{equation}\label{e.I}
I^-:= \{ \ell \in I, \ell -k\in I, k =1,\ldots, \lfloor\ell \rfloor\}.
\end{equation}
We write
$\Phi^t(U^0)\equiv\Phi(U^0,t)\equiv U(t)$ for the solution of  (\ref{eqn:see})
 with initial value   $U(0)=U^0\in\kY$ which exists on some time interval $[0,T]$, $T>0$ by \cite{Pazy}. 
The following  theorem \cite[Theorem 2.2]{Wulff2016}
provides additional regularity of the semiflow $\Phi^t$ under our assumptions.

\begin{theorem}[Regularity of the semiflow]\label{t.semiflow} 
  Assume   (A1) and (B). Let $R>0$. Then
  there is $T_*>0$ such that there
  exists a semiflow $\Phi$  of  \eqref{eqn:see}
which satisfies
\begin{subequations}
  \begin{equation}
    \Phi^t\in \kCb^{N}(\kB_0^{R/2};\kB^R_0) 
   \label{e.PhiUDeriv}
  \end{equation}
with uniform bounds in  $t\in [0,T_*]$. Moreover if $\ell \in I^-$, 
 $k\in \N_0$, 
$k \leq  \ell$,  then
\begin{equation}
 \Phi(U) \in \kCb^k([0,T_*];\kB^R_0) 
 \label{eqn:semiflow_regt}
\end{equation}
with uniform bounds in $U \in \kB_\ell^{R/2}$.
\label{e.semiflow_reg}
\end{subequations}
  The bounds on $T_*$ and   $\Phi$    depend only on $R$,  
  $\omega$ from \eqref{eqn:semigroup_bound}, and the bounds  afforded by
  assumption (B) on balls of radius $R$.
\end{theorem}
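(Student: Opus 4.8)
The plan is to work from the mild (Duhamel) formulation
\begin{equation}
  U(t) = e^{tA} U^0 + \int_0^t e^{(t-s)A} B(U(s))\, \d s
  \label{e.duhamel_plan}
\end{equation}
and to solve it by a contraction argument. Since $A$ is normal and generates a strongly continuous semigroup, the bound $\norm{e^{tA}}{\kY}\leq e^{\omega t}$ from \eqref{eqn:semigroup_bound} holds and $e^{tA}$ commutes with $|A|^\ell$, so $e^{tA}$ restricts to a bounded semigroup on each $\kY_\ell$. By assumption (B) with $N\geq 1$, $B$ is bounded (with constant $M[R]$) and Lipschitz (with constant $M'[R]$) on the convex ball $\kB_0^R$. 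First I would fix $R>0$ and view the right-hand side of \eqref{e.duhamel_plan} as a map $\mathcal{F}[U^0]$ acting on $C([0,T_*];\kB_0^R)$. For $T_*$ small enough, depending only on $R$, $\omega$ and the bounds from (B), this map sends the ball of radius $R$ about $t\mapsto e^{tA}U^0$ into itself and is a contraction, yielding a unique mild solution; uniqueness then gives the semiflow property, and $\Phi^t(\kB_0^{R/2})\subseteq\kB_0^R$ for $t\in[0,T_*]$.

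To obtain the spatial regularity \eqref{e.PhiUDeriv}, I would invoke smooth dependence of the fixed point on the initial datum. The map $\mathcal{F}$ depends on $U^0$ only through the linear term $e^{tA}U^0$, and $B\in\kCb^{N}(\kB_0^R;\kY)$, so the uniform contraction principle with parameters shows that the solution operator $U^0\mapsto\Phi^\cdot(U^0)$ is $\kCb^{N}$ from $\kB_0^{R/2}$ into $C([0,T_*];\kB_0^R)$, its derivatives solving the successively differentiated Duhamel equations. Evaluating at fixed $t$ gives $\Phi^t\in\kCb^{N}(\kB_0^{R/2};\kB_0^R)$ with bounds uniform in $t\in[0,T_*]$.

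The main work, and the expected obstacle, is the temporal regularity \eqref{eqn:semiflow_regt}. I would first establish invariance of the scale: rerunning the contraction in $C([0,T_*];\kB_\ell^R)$, using that $e^{tA}$ is bounded on $\kY_\ell$ and that $B$ maps $\kY_\ell$ into itself by (B), shows that $U^0\in\kB_\ell^{R/2}$ forces $U(t)\in\kB_\ell^R$ uniformly. For $U^0\in D(A)$ the mild solution is a classical solution, so $\frac{\d U}{\d t}=AU+B(U)$ holds in $\kY$. Now I would bootstrap: if $\ell\in I^-$ and $U(t)\in\kY_\ell$, then $AU(t)\in\kY_{\ell-1}$ and $B(U(t))\in\kY_\ell\subseteq\kY_{\ell-1}$, so $\frac{\d U}{\d t}\in\kY_{\ell-1}$; differentiating once more gives $\frac{\d^2 U}{\d t^2}=A\frac{\d U}{\d t}+\D B(U)\frac{\d U}{\d t}\in\kY_{\ell-2}$, where $\D B(U)$ is bounded on $\kY_{\ell-1}$ by (B). Iterating yields $\frac{\d^k U}{\d t^k}\in\kY_{\ell-k}\subseteq\kY_0$ for all integers $k\leq\ell$, with uniform bounds.

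The delicate point is the bookkeeping of the Leibniz (Fa\`a di Bruno) expansion of $\frac{\d^{k-1}}{\d t^{k-1}}B(U)$ against the loss of one spatial order per time derivative: computing $\frac{\d^k U}{\d t^k}$ calls on derivatives of $B$ up to order $k-1\leq\lfloor\ell\rfloor$ evaluated on intermediate levels $\kY_{\ell-j}$, $j=1,\ldots,\lfloor\ell\rfloor$. One must check that each such level lies in $I$ and that $B$ there has enough derivatives, i.e.\ $N-\lceil\ell-j\rceil\geq k-1$; the first is exactly the defining property $\ell\in I^-$, and the second follows from $N>\lceil L\rceil\geq\lceil\ell\rceil$. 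Continuity in $t$ of each derivative is inherited from the strong continuity of $e^{tA}$ and the continuity of $B$ and its derivatives, giving $\Phi(U)\in\kCb^k([0,T_*];\kB_0^R)$ uniformly in $U\in\kB_\ell^{R/2}$.
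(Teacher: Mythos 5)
The paper itself gives no proof of Theorem \ref{t.semiflow}; it is imported verbatim from \cite[Theorem 2.2]{Wulff2016}, and your argument --- contraction in the Duhamel formulation, smooth dependence on the datum via the uniform contraction principle (as in the appendix of \cite{OW10A}), invariance of the scale $\kY_\ell$, and the bootstrap $\partial_t^k U = A\,\partial_t^{k-1}U + \partial_t^{k-1}\bigl(B(U)\bigr)$ trading one spatial order per time derivative, with $\ell\in I^-$ guaranteeing that the intermediate levels lie in $I$ and $N>\lceil L\rceil$ supplying enough derivatives of $B$ --- is exactly the proof given there and mirrors the arguments this paper does spell out for the numerical method (Theorem \ref{thm:num_method_regularity}) and for Lemma \ref{l.phimDerivs}. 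Your proposal is correct and takes essentially the same route.
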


Examples of PDEs satisfying assumptions (A) and (B) are the semilinear wave equation and nonlinear Schr\"odinger equation with periodic, Neumann and Dirichlet boundary
  conditions as discussed in \cite{Wulff2016}.
%\fbox{include shortened examples}

Assumption (A1) implies that 
\begin{equation}
\Re(\spec(A))\leq \omega, \quad  \norm{e^{tA}}{\kY\to\kY}\leq   e^{\omega t},
  \label{eqn:semigroup_bound}
\end{equation}
 for some $\omega \geq 0$, see \cite{Pazy}. 
Let us decompose $A$ as $A_{\rm skew} = \frac12(A-A^*)$, $A_{\rm sym}= \frac12(A+A^*)$.
Then $A_{\rm skew}$ and $A_{\rm sym}$ commute, $A=A_{\rm skew}+  A_{\rm sym}$, $A_{\rm skew}$ is a skew symmetric operator and
$A_{\rm sym}$ is self-adjoint  and its spectrum is bounded from above by \eqref{eqn:semigroup_bound}:    $ {\rm spec}(A_{\rm sym}) \leq \omega$.  Let $\P^\pm_{\rm sym}$ the spectral projection
of $A_{\rm sym}$ to $\R^\pm$  
and $A^\pm_{\rm sym}:= \P^\pm_{\rm sym} A_{\rm sym}$.
From now on we assume without loss of generality that $\omega=0$ in \eqref{eqn:semigroup_bound} 
by adding $A_{\rm sym}^+ U$ to $B(U)$ and replacing $A$ by $A_{\rm skew} + A^-_{\rm sym}$.
 
\begin{enumerate}
\item[(A2)] $A$ satisfies \eqref{eqn:semigroup_bound} with $\omega=0$.
\end{enumerate}

%%%%%%%%%%%%%%%%%%%%%%%%%%

\section{Exponential Runge Kutta methods}
As in \cite{Owren} we consider numerical methods of the form
\begin{subequations}
\label{e.expInt}
\begin{align}
W &= \exp(h \c A) U^0\1 + h  \a (h A) B(W ),
\label{e.Pi}\\
U^1 &= \exp(h  A) U^0  + h  \b^T(hA) B(W). \label{e.psi}
\end{align}
\end{subequations}
which are called exponential Runge Kutta methods.
Here we define
\begin{equation}
  U \1 = 
  \begin{pmatrix}
    U \\
    \vdots \\
    U
  \end{pmatrix}\in\kY^s ~~\mbox{ for }~~U \in \kY,\quad
  W =
  \begin{pmatrix}
    W^1 \\
    \vdots \\
    W^s
  \end{pmatrix},\quad B(W) =
  \begin{pmatrix}
    B(W^1) \\
    \vdots \\
    B(W^s)
  \end{pmatrix}.
  \label{e.defB(W)}
\end{equation}
$W^1,\ldots,W^s$ are the stage vectors, $0\leq \c_1\leq \c_2\leq \ldots \leq \c_s\leq 1$  and we denote
\begin{equation*}
  (\a W)^i=\sum_{j=1}^s\a_{ij}W^j,\quad \b^TW = \sum_{i=1}^s \b_iW^i, \quad
    \c W = \left( \begin{array}{c}   \c_1  W^1\\
 \c_2 W^2\\
\vdots\\
\c_s  W^s
\end{array} \right).
\end{equation*}
We define
\begin{equation}\label{e.normW}
\|W\|_{\kY_\ell^s} := \max_{j=1,\ldots,s} \|W^i\|_{\kY_\ell}.
\end{equation}

\begin{examples}\rm \label{ex.expRK} \
\begin{enumerate}
\item For  the {\em exponential Euler method}
\[
U^1  = \exp(h  A) U^0  + h \varphi_1(h  A)  B(U^0)
\]
 we have $s=1$, $\b(z) = \varphi_1(z)$,  $\a=\c=0$ and $\varphi_1(z) =\frac{e^{z}-1}{z} $, see e.g.~\cite{Hochbruck}. 
 %This method solves systems with $B$ independent of $U$ exactly.
\item   For  the {\em Euler-Larson method} 
$
U^1  = \exp(h  A) U^0  + h \exp(h  A)  B(U^0)
$
we have  $s=1$, $\b(z) = \exp(z)$,  $\a=\c=0$.
\item For the 
{\em implicit Lawson-Euler method}  
$
U^1  = \exp(h  A) U^0  + h    B(U^1)
$
we have $s=1$, $\b(z) = 1$,  $\a=\c=1$.
\end{enumerate}
\end{examples}

 We assume the following:
 \begin{enumerate}
 \item[(EXP)]   $\a: \C^-_0 \to {\rm Mat}_\C(s,s)$, $\b: \C^-_0 \to \C^s$ and their derivatives are analytic and bounded with bounds   $M_\a^{(k)} =\sup_{z \in  \C^-_0} \|\a^{(k)}(z)\|_\infty$, $k\in \N_0$, $ M_\a =M_\a^{(0)} $, 
$M_\a'=M_\a^{(1)}$,  and $M_\b^{(k)}= \sup_{z \in  \C^-_0} \|\b^{(k)}(z)\|_1$, $M_\b^{(0)}= M_\b$, $M_\b'= M_\b^{(1)}$.
 \end{enumerate}
 
 Assumption (EXP)  is true for the  examples above. Note that $\a(hA)$ and $\b(hA)$ are well defined by functional calculus because $A$ is a normal operator.

%%%%%%%%%%%%%%%%%%%%%%%%%%%%%%%%%%%%%%

\section{Regularity of the exponential Runge-Kutta method}
To prove regularity of the exponential Runge-Kutta method we use the following estimate which follow from \eqref{eqn:inner_prod}: 
\begin{equation}
 \label{e.Y_ellCompare}
\|A^\ell\|_{\kY_\ell \to \kY} \leq 1~~\mbox{for}~~\ell\geq 0 \quad\mbox{and}\quad \| U\|_{\kY_\ell} \leq \| U\|_{\kY_n} ~~\mbox{for}~~ 0\leq \ell\leq n
\end{equation}

We also need the following lemma:

\begin{lemma}[Bounds on the $h$ derivatives of $\a(hA)$ and $\b(hA)$]\label{l.ahA}
 Assume (EXP), (A1) and (A2).  Then for all $\ell\geq 0$, $k\in \N_0$, $\ell\geq k$, $h\geq 0$, $W\in \kY^s_\ell$, 
\[ h \mapsto \a(hA) W \in \kCb^k(\R_0^+; \kY^s_{\ell-k}),\quad  h \mapsto \b(hA) W \in \kCb^k(\R_0^+; \kY_{\ell-k})
\]
and 
\begin{subequations}
\begin{align}
\|\partial_h^k ( \a(hA)) \|_{\kY^s_\ell\to \kY^s_{\ell-k}}\leq M_\a^{(k)}, \quad |\partial_h^k ( \b(hA)) \|_{\kY^s_\ell\to \kY_{\ell-k}}\leq M_\b^{(k)},
\\\
   \|\partial^k_h( h\a(hA) ) W\|_{\kY^s_{\ell-k}}  \leq
k M_\a^{(k-1)} \|W\|_{\kY^s_{\ell-1}}+   h M_\a^{(k)} \|W\|_{\kY^s_\ell} \leq M_{h,\a}^{(k)} \|W\|_{\kY^s_\ell}, \label{e.hahA}\\
 \|\partial^k_h( h\b(hA) ) W\|_{\kY_{\ell-k}}  \leq
k M_\b^{(k-1)} \|W\|_{\kY^s_{\ell-1}}+   h M_\b^{(k)} \|W\|_{\kY^s_\ell} \leq M_{h,\b}^{(k)} \|W\|_{\kY^s_\ell},\label{e.hbhA}
\end{align}
\end{subequations}
with  $M_\a^{(k)} $, $M_\b^{(k)} $ from (EXP) and $M_{h,\a}^{(k)} = k M_\a^{(k-1)}+ h M_\a^{(k)}$,   $M_{h,\b}^{(k)} = k M_\b^{(k-1)}+ h M_\b^{(k)}$.
\end{lemma}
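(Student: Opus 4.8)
The plan is to reduce everything to scalar spectral multiplier estimates via the functional calculus for the normal operator $A$, whose spectral measure $E$ is supported in $\C^-_0$ by (A1)--(A2). The starting point is the chain rule for the functional calculus: writing $\a(hA)$ entrywise as $\int \a(h\lambda)\,dE(\lambda)$, the scalar identity $\partial_h^k \a(h\lambda) = \lambda^k \a^{(k)}(h\lambda)$ suggests
\begin{equation*}
\partial_h^k(\a(hA)) = A^k\,\a^{(k)}(hA),\qquad \partial_h^k(\b(hA)) = A^k\,\b^{(k)}(hA),
\end{equation*}
and the first task is to make this rigorous together with the claimed regularity and the loss of exactly $k$ derivatives.

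First I would prove, by induction on $j=0,1,\dots,k$, that for $W\in\kY^s_\ell$ the map $h\mapsto A^j\a^{(j)}(hA)W$ is well defined into $\kY^s_{\ell-j}$ and differentiable into $\kY^s_{\ell-j-1}$ with derivative $A^{j+1}\a^{(j+1)}(hA)W$. The difference quotient minus the candidate derivative is the image under the functional calculus of the scalar multiplier
\begin{equation*}
g_\delta(\lambda) = \lambda^j\Big(\tfrac{\a^{(j)}((h+\delta)\lambda)-\a^{(j)}(h\lambda)}{\delta} - \lambda\,\a^{(j+1)}(h\lambda)\Big),
\end{equation*}
which tends to $0$ pointwise in $\lambda$ and, writing the difference quotient in integral form and using (EXP), obeys $|g_\delta(\lambda)|\le 2M_\a^{(j+1)}|\lambda|^{j+1}$. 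Measured in $\kY_{\ell-j-1}$ the high-frequency contribution is $\int_{|\lambda|>1}|\lambda|^{2(\ell-j-1)}|g_\delta(\lambda)|^2\,d\mu_W(\lambda)$ with $\mu_W=\|E(\cdot)W\|^2_\kY$, whose integrand is dominated by $4(M_\a^{(j+1)})^2|\lambda|^{2\ell}$; since $W\in\kY_\ell$ this dominating function is integrable, so dominated convergence forces the limit to vanish (the low-frequency part, where $|\lambda|\le1$, is handled identically and is easier). The same argument with $\a^{(k)}$ in place of the difference quotient gives continuity of the $k$th derivative, establishing the $\kCb^k$ statements and the identity $\partial_h^k(\a(hA))=A^k\a^{(k)}(hA)$; the case of $\b$ is identical.

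Next comes the operator-norm bound, which is the algebraic core. For a single component and $V = A^k\a^{(k)}(hA)U$, I split $\|V\|_{\kY_{\ell-k}}^2 = \|\P_1 V\|_\kY^2 + \||A|^{\ell-k}\Q_1 V\|_\kY^2$ as in \eqref{eqn:inner_prod}. Since all operators here are functions of $A$ they commute with $\P_1,\Q_1$ and with $|A|^{\ell}$; on the high-frequency range the spectral identity $\||A|^{\ell-k}A^k V'\|_\kY = \||A|^{\ell}V'\|_\kY$ (valid because $|\lambda|^{\ell-k}|\lambda^k| = |\lambda|^\ell$) converts the weight $|A|^{\ell-k}A^k$ into $|A|^\ell$, while on the low-frequency range $\|A^k\P_1\|_{\kY\to\kY}\le 1$; in both cases $\a^{(k)}(hA)$ then contributes at most $M_\a^{(k)}$ by the functional calculus. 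Recombining the two pieces yields $\|V\|_{\kY_{\ell-k}}\le M_\a^{(k)}\|U\|_{\kY_\ell}$, and passing from one component to the $s\times s$ matrix $\a^{(k)}$ is handled entrywise using the induced $\infty$-norm (resp. the $\ell^1$-norm for the covector $\b^{(k)}$) that enters the definition of $M_\a^{(k)}$ and $M_\b^{(k)}$ together with the max-norm \eqref{e.normW} on $\kY^s_\ell$.

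Finally, \eqref{e.hahA}--\eqref{e.hbhA} follow from the Leibniz rule, which since $h$ contributes only through its zeroth and first derivatives gives $\partial_h^k(h\,\a(hA)) = h\,A^k\a^{(k)}(hA) + k\,A^{k-1}\a^{(k-1)}(hA)$. Applying the norm bound just proved to the first summand gives $h\,M_\a^{(k)}\|W\|_{\kY^s_\ell}$, while the second summand, of derivative order $k-1$, is measured in $\kY^s_{\ell-k}=\kY^s_{(\ell-1)-(k-1)}$, so the same bound applied with regularity index $\ell-1$ in place of $\ell$ yields $k\,M_\a^{(k-1)}\|W\|_{\kY^s_{\ell-1}}$; this is the first inequality in \eqref{e.hahA}, and the second follows from $\|W\|_{\kY^s_{\ell-1}}\le\|W\|_{\kY^s_\ell}$ in \eqref{e.Y_ellCompare}. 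The computation for $\b$ is verbatim. I expect the main obstacle to be the first step: justifying differentiation under the functional calculus and, in particular, tracking the loss of exactly $k$ derivatives, which is what forces the dominated-convergence argument to be run in the weighted high-frequency spectral integral rather than in operator norm.
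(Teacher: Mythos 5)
Your proposal is correct and follows essentially the same route as the paper: the identity $\partial_h^k(\a(hA))=A^k\a^{(k)}(hA)$ via the functional calculus, the operator bound $\|A^k\a^{(k)}(hA)\|_{\kY_\ell^s\to\kY_{\ell-k}^s}\le M_\a^{(k)}$ from the spectral form of the $\kY_\ell$ norms (the paper cites \eqref{e.Y_ellCompare}), and the Leibniz formula $\partial_h^k(h\a(hA))=kA^{k-1}\a^{(k-1)}(hA)+hA^k\a^{(k)}(hA)$. The only difference is that you justify differentiation under the spectral integral by a dominated-convergence argument where the paper instead appeals to smoothness of the truncations $\a(hA)\P_m$; this is a matter of detail, not of method.
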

\begin{proof}
We define $\P_m W$ such that $(\P_m W)^i= \P_m W^i$, $i=1,\ldots, s$.
The fact that $\a(hA):\kY^s_\ell\to\kY^s_\ell$ and $\b(hA):\kY^s_\ell\to \kY_\ell$ are bounded and strongly continuous
in $h$  follows from the smoothness of $\a(hA) \P_m$ and $\b(hA) \P_m$  in $h$ which  is guaranteed by assumptions (EXP), (A1) and holds
for all $m>0$.  Moreover 
$\partial_h^k \a(hA) = A^k \a^{(k)}(hA)$ and so by \eqref{e.Y_ellCompare} we have
$\|\partial_h^k ( \a(hA)) \|_{\kY^s_\ell\to \kY^s_{\ell-k}}\leq M_\a^{(k)}$.
The same holds for $\b(hA)$.
We have  $ \partial_h( h\a(hA) ) = \a(hA) + hA \a'(hA)$
  and iteratively
  \[
   \partial_h^k( h\a(hA) ) = kA^{k-1} \a^{(k-1)}(hA) +hA^k \a^{(k)}(hA)
  \]
  and so \eqref{e.hahA} holds. The same applies for $\b(hA)$ 
which proves \eqref{e.hbhA}.
\end{proof}

\begin{theorem}[Regularity of numerical method]
  Assume   (A1), (A2) and (B), and apply an exponential integrator $\Psi$
 satisfying condition (EXP) to (\ref{eqn:see}).  Let $R>0$.
 Then there is $h_*>0$ such that   
  \begin{subequations}
\begin{equation}\label{eqn:glob-U-num_method_regularity}
    W^i(\cdot,h), \Psi(\cdot,h) \in \kCb^{N} (\kB_0^{R/2};\kB^R_0)
  \end{equation}
 for $i=1,\ldots, s$, 
with uniform bounds in  $h \in [0,h_*]$.
 Moreover, for
  $\ell \in I^-$, $k\in \N_0 $, $k \leq  \ell$,
 $i=1,\ldots, s$,
\begin{equation}\label{eqn:glob-num_method_regularity}
    W^i(U,\cdot), \Psi(U,\cdot) \in \kCb^k( [0,h_*];\kB_0^R)
  \end{equation}
\end{subequations}
  with uniform bounds in $U \in\kB^r_\ell$. 
  The bounds on  $\Psi$, $W$  and  $h_*$ 
  depend only on  $R$,
   the bounds  from   (B)   for $B$ and its derivatives on balls of radius $R$ and
 the bounds of the constants of the numerical method from (EXP).
  \label{thm:num_method_regularity}
\end{theorem}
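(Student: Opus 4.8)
The plan is to follow the proof of Theorem~\ref{t.semiflow}, treating the two assertions separately. Assertion \eqref{eqn:glob-U-num_method_regularity} (smoothness in the initial value) will come from a parametrized contraction argument for the stage equation \eqref{e.Pi}, while assertion \eqref{eqn:glob-num_method_regularity} (smoothness in $h$) will be proved by induction on $k$, the point being that by Lemma~\ref{l.ahA} each $h$-derivative of the coefficient operators $h\a(hA)$, $h\b^T(hA)$ costs exactly one power of $A$, i.e.\ one order on the scale $\{\kY_\ell\}$.

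First I would establish well-definedness and fix $h_*$. Writing the right-hand side of \eqref{e.Pi} as $F(W,U,h)=\exp(h\c A)U\1+h\a(hA)B(W)$, assumption (A2) gives $\|\exp(h\c A)\|_{\kY\to\kY}\le1$ and Lemma~\ref{l.ahA} (with $\ell=k=0$) gives $\|h\a(hA)\|_{\kY_0^s\to\kY_0^s}\le hM_\a$. Hence for $U\in\kB_0^{R/2}$ the map $F(\cdot,U,h)$ carries the closed ball $\{\|W\|_{\kY_0^s}\le R\}$ into itself once $hM_\a M[R]\le R/2$ and is a contraction there with constant $hM_\a M'[R]$; choosing $h_*$ so that $h_*M_\a M'[R]\le\tfrac12$ (and shrinking it so $\Psi\in\kB_0^R$) yields, by the Banach fixed point theorem applied uniformly in $(U,h)$, a unique stage vector $W=W(U,h)$ with $W^i\in\kB_0^R$ and $\Psi=\exp(hA)U+h\b^T(hA)B(W)\in\kB_0^R$. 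For \eqref{eqn:glob-U-num_method_regularity}, (B) with $\ell=0$ makes $F$ of class $\kCb^N$ in $(W,U)$ with bounds uniform in $h\in[0,h_*]$, while $\|\partial_W F\|_{\kY_0^s\to\kY_0^s}=\|h\a(hA)\D B(W)\|\le hM_\a M'[R]<1$; the uniform contraction principle then gives $W(\cdot,h)\in\kCb^N(\kB_0^{R/2};\kB_0^R)$ with bounds independent of $h$, and the same for $\Psi$ by composition of $\kCb^N$ maps.

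For \eqref{eqn:glob-num_method_regularity}, fix $\ell\in I^-$ and $U\in\kB_\ell^r$, and prove by induction on $b=0,\dots,k$ that $\partial_h^bW\in\kY_{\ell-b}^s$ and $\partial_h^bB(W)\in\kY_{\ell-b}^s$ with uniform bounds. The base case follows by running the contraction on $\kY_\ell^s$, legitimate since $\ell\in I$ makes $\a(hA)$ preserve $\kY_\ell^s$ and $B$ map $\kB_\ell^R$ boundedly into $\kY_\ell^s$. For the step I differentiate $W=\exp(h\c A)U\1+h\a(hA)B(W)$ exactly $k$ times, apply Leibniz, and isolate the single term containing $\partial_h^kW$, obtaining
\begin{equation*}
\bigl(\id-h\a(hA)\D B(W)\bigr)\partial_h^kW=(\c A)^k\exp(h\c A)U\1+\sum_{j=0}^{k-1}\binom{k}{j}\partial_h^{k-j}\bigl(h\a(hA)\bigr)\partial_h^jB(W)+h\a(hA)\tilde r_k,
\end{equation*}
where $\tilde r_k$ collects the Fa\`a di Bruno terms $\D^mB(W)[\partial_h^{j_1}W,\dots,\partial_h^{j_m}W]$ of $\partial_h^kB(W)$ with $\sum j_i=k$ and all $j_i<k$. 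The left operator is boundedly invertible on $\kY_{\ell-k}^s$ by a Neumann series, since $\ell-k\in I$ and $\|h\a(hA)\D B(W)\|_{\kY_{\ell-k}^s\to\kY_{\ell-k}^s}\le hM_\a M'_{\ell-k}[R]<1$ for $h\le h_*$. The first right-hand term lies in $\kY_{\ell-k}^s$, bounded by $\|U\|_{\kY_\ell}$; each Leibniz term has $\partial_h^jB(W)\in\kY_{\ell-j}^s$ by hypothesis, so \eqref{e.hahA} lowers $\partial_h^{k-j}(h\a(hA))$ applied to it into $\kY_{\ell-k}^s$; and $\tilde r_k\in\kY_{\ell-k}^s$ since its arguments $\partial_h^{j_i}W\in\kY_{\ell-j_i}^s\subseteq\kY_{\ell-k}^s$. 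This gives $\partial_h^kW\in\kY_{\ell-k}^s\subseteq\kY_0^s$, and then $\partial_h^kB(W)=\D B(W)\partial_h^kW+\tilde r_k\in\kY_{\ell-k}^s$ closes the induction. The identical Leibniz expansion of $\Psi$, now using \eqref{e.hbhA} and carrying no implicit term, gives $\partial_h^k\Psi\in\kY_{\ell-k}\subseteq\kY_0$.

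The delicate point, and the genuine difference from Theorem~\ref{t.semiflow}, is controlling $\partial_h^jB(W)$: one must check that $\D^mB(W)[\partial_h^{j_1}W,\dots,\partial_h^{j_m}W]$ (with $\sum j_i=j$, $m\le j$) can be estimated in $\kY_{\ell-j}$. This succeeds only if one bounds $\D^mB(W)$ in the \emph{target} space $\kY_{\ell-j}$ itself, rather than in the higher space where its arguments naturally live: then $\ell-j\in I$ because $j\le\lfloor\ell\rfloor$, every factor $\partial_h^{j_i}W\in\kY_{\ell-j_i}$ embeds into $\kY_{\ell-j}$ because $j_i\le j$, and the required order of $B$ satisfies $m+\lceil\ell-j\rceil=m+\lceil\ell\rceil-j\le\lceil\ell\rceil\le\lceil L\rceil<N$, which stays within the differentiability budget of (B). I expect this coupled bookkeeping of regularity loss against the smoothness budget $N$ to be the main obstacle; the hypotheses $k\le\ell$, $\ell\in I^-$ and $N>\lceil L\rceil$ are exactly what force every index to be non-negative, lie in $I$, and stay below $N$.
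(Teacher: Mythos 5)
Your proposal is correct and follows essentially the same route as the paper: a parametrized contraction argument for the stage equation \eqref{e.Pi} giving \eqref{eqn:glob-U-num_method_regularity} via the uniform contraction principle, and then, for \eqref{eqn:glob-num_method_regularity}, running the fixed-point argument on the scale $\kY_{\ell-j}$ (using $\ell\in I^-$) and obtaining the $h$-derivatives of $W$ by implicit differentiation of the fixed-point equation, with Lemma~\ref{l.ahA} supplying the loss of one order per $h$-derivative of the coefficient operators. The only difference is one of presentation: the paper delegates the justification of the implicit differentiation to the appendix of \cite{OW10A}, whereas you carry it out explicitly (Neumann-series inversion of $\id-h\a(hA)\D B(W)$ on $\kY^s_{\ell-k}$ and the Fa\`a di Bruno bookkeeping against the smoothness budget $N$), which is consistent with the cited argument.
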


\begin{proof} 
If the numerical method is explicit (i.e., $\a_{ij}=0$ for $i\leq j$) then we choose $h_*>0$ such that $h_* \max(M_\a, M_\b) M[R]\leq \frac{R}2$ noting \eqref{e.expInt} and (A2).
If the method is implicit, then, similarly
as in \cite{Wulff2016}, we   compute $W$ as
  fixed point of the map $\Pi:\kB^R_{\kY^s}(0) \times\kB^{R/2}_\kY(0) \times [0,h_*]\to
   \kY^s$, given by
 \begin{equation}
  \label{e.Pi-W}
  \Pi(W,U,h)  =\exp(h \c A) \1 U + h  \a (h A) B(W )
 \end{equation}
  using \eqref{e.Pi}.
 For $U \in \kB^{R/2}_\kY(0)$, $W \in \kB^R_{\kY^s}(0)$ we get from \eqref{e.normW}, (A2) and (B)
that
 \begin{align}
    \|\Pi(W,U,h) \|_{\kY^s} &\leq
\|\exp(h \c A) \1 U\|_{\kY^s} + h   \| \a (h A) \|_{\kY^s\to \kY^s} M[R] \notag\\
    &\leq    R/2 + h M_\a M[R]
    \leq R \label{e.Pi-Fix}
  \end{align}
  for $h\in [0,h_*]$ and $h_*$  as above.
So $\Pi$ maps $\kB^{R}_{\kY^s}(0)$ to itself.
Next choose $h_*$ such that also  $2h_*M_\a M'[R]\leq 1$. Then
$\|\D_W\Pi(W,U,h)\|_{\kY^s \to \kY^s} \leq hM_\a M'[R]\leq 1/2$  for $W\in \kB^R_{\kY^s}(0)$,   $h\in [0,h_*]$,  and  $\Pi$ is a
  contraction. Moreover $\Pi$ is continuous in $h$ by assumption (A1) and Lemma \ref{l.ahA}.
  Therefore, by (B),  $W\in
  \kCb(\kB^{R/2}_{\kY}(0) \times [0,h_*];\kB^{R}_{\kY^s}(0))$
with $N$ derivatives in $U$. 
 This proves 
\eqref{eqn:glob-U-num_method_regularity} and also
\eqref{eqn:glob-num_method_regularity}  in the case $k=0$  for $W$. 
If $k\neq 0$ then, since  $\ell \in I^-$,  the above argument also
  holds if $\kY$ is replaced by $\kY_{\ell-j}$, $j=0,\ldots,
  k$. Hence  there is some $h_*>0$  such that  $W^i  \in
  \kCb(\kB^{R/2}_{\ell-j}\times[0,h_*];\kB^R_{\ell-j})$, $j=0,\ldots,
  k$, $i=1,\ldots, s$.  The map $\Pi$ from   \eqref{e.Pi-W} belongs to the class of contraction mappings studied in \cite[Appendix]{OW10A}. Hence by the results of \cite[Appendix]{OW10A}  for $U \in\kB^{R/2}_{\ell}$ the 
$h$ derivatives up to order $k$ of $W$ can then be obtained by
  implicit differentiation of $\Pi(W,U,h)=W(U,h)$. Moreover the $h$ derivatives of $\Psi$ can be obtained  by differentiating \eqref{e.psi}, using Lemma \ref{l.ahA}.
\end{proof}
 
%%%%%%%%%%%%%%%%%%%%%%%%%%%%%%%%%%%

\section{Galerkin truncation of the exponential Runge Kutta method}

In this section we  truncate the semiflow $\Phi^t$ of
\eqref{eqn:see} and the numerical method $\Psi^h$ defined by
\eqref{e.expInt}   to a Galerkin subspace of
$\kY$ and study the truncation error.  Let $\phi_m^t(u^0_m)=u_m(t)$ be the flow of 
 the
projected  evolution equation
\begin{align}
  \frac{ \d u_m}{\d t}&= Au_m + B_m(u_m) \label{eqn:see_proj}
\end{align}
where $B_m(U) =\P_m B(\P_m U)$,
$A_m = \P_m A$ and define $\Phi^t_m :=\phi^t_m\circ \P_m$.
\eqref{eqn:inner_prod} implies the followig estimates which are crucial for our analysis: for $\ell \geq 0$, $k\geq 0$, $m\geq 1$,
\begin{equation}
  \norm{A^\ell\P_m U}{\kY}\leq m^\ell\norm{\P_m U}{\kY},\quad
 \|\P_m \|_{ \kY_\ell\to \kY_{\ell+k}} \leq m^k,
\quad \norm{\Q_m U}{\kY}\leq m^{-\ell}\norm{U}{\kY_\ell}.
  \label{eqn:proj_est}
\end{equation}

As shown in  \cite{Wulff2016} the following holds:

\begin{lemma}[Projection error for the semiflow]\label{l.see_proj_err}
 Assume  (A1), (A2) and (B) and  let  
 $\ell>0$. Then Theorem \ref{t.semiflow} applies to the  Galerkin truncated semiflow of  \eqref{eqn:see} uniformly in $m \geq 0$.
  Moreover for fixed $\delta>0$ and $T>0$
    and  for all $U^0$ with
 \begin{subequations}
 \begin{equation}\label{e.PhiGalErrorCond}
    \|\Phi^t(U^0)\|_{\kY_\ell} \leq R, ~~t \in [0,T]
  \end{equation}
  there is $m_*\geq 0$ such that for $m\geq m_*$ we have
  $\Phi_m^t(U^0) \in \kB^{R+\delta}_0$ for $t\in [0,T]$, and
  \begin{equation}\label{e.globPhiGalError}
    \| \Phi^t(U^0)-\Phi^t_m(U^0) \|_\kY = m^{-\ell}R  \e^{t M'}= \kO(m^{-\ell})
  \end{equation}
  for $m\geq m_*$ and
$t\in [0,T]$, where $M'=M'_0[R+\delta]$.
\end{subequations}
  Here $m_* $ and the order constant  depend only on $\delta$, $R$, $T$,
  \eqref{eqn:semigroup_bound} and the bounds afforded by
 (B) on balls of radius $R+\delta$.
\end{lemma}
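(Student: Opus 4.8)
The plan is to first verify that the projected system \eqref{eqn:see_proj} satisfies the hypotheses of Theorem~\ref{t.semiflow} with constants independent of $m$, and then to control $\Phi^t-\Phi_m^t$ by a Gronwall argument based on a variation-of-constants representation and the projection estimates \eqref{eqn:proj_est}. For the first claim I would note that the linear part of \eqref{eqn:see_proj} is still $A$, so (A1) and (A2) are untouched, while $B_m(U)=\P_m B(\P_m U)$. Since $\P_m$ is the spectral projection of the normal operator $A$, it commutes with $\e^{tA}$ and with $|A|^\ell$, hence is an orthogonal projection in each $\kY_\ell$ with $\|\P_m\|_{\kY_\ell\to\kY_\ell}\le1$; thus $\P_m\kB^R_\ell\subseteq\kB^R_\ell$ and, by the chain rule, $\D^k B_m(U)=\P_m\,\D^k B(\P_m U)[\P_m\,\cdot,\dots,\P_m\,\cdot]$, so that the $\kCb$-bounds of $B_m$ are dominated by those of $B$ uniformly in $m$. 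Hence (B) holds for \eqref{eqn:see_proj} with the same constants and Theorem~\ref{t.semiflow} applies with $T_*$ and all bounds independent of $m$.

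For the error estimate I would start from the variation-of-constants formulas
\begin{align*}
\Phi^t(U^0)&=\e^{tA}U^0+\int_0^t\e^{(t-s)A}B(\Phi^s(U^0))\,\d s,\\
\Phi_m^t(U^0)&=\e^{tA}\P_m U^0+\int_0^t\e^{(t-s)A}\P_m B(\Phi_m^s(U^0))\,\d s,
\end{align*}
where the second uses $\P_m\e^{tA}=\e^{tA}\P_m$ and $\P_m B(\P_m u)=\P_m B(u)$ for $u$ in the range of $\P_m$. I then split the error as $\Phi^t-\Phi_m^t=\Q_m\Phi^t+\big(\P_m\Phi^t-\Phi_m^t\big)$. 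The first term is handled by the last estimate in \eqref{eqn:proj_est} together with \eqref{e.PhiGalErrorCond}, giving $\|\Q_m\Phi^t(U^0)\|_\kY\le m^{-\ell}\|\Phi^t(U^0)\|_{\kY_\ell}\le m^{-\ell}R$. Applying $\P_m$ to the first formula and subtracting the second shows that $d_m(t):=\P_m\Phi^t(U^0)-\Phi_m^t(U^0)$ has zero initial value and satisfies $d_m(t)=\int_0^t\e^{(t-s)A}\P_m\big(B(\Phi^s(U^0))-B(\Phi_m^s(U^0))\big)\,\d s$.

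Using the contractivity of $\e^{tA}$ and $\P_m$ on $\kY$ afforded by (A2), the mean value inequality with Lipschitz constant $M'=M'_0[R+\delta]$, and $\|\Phi^s-\Phi_m^s\|_\kY\le m^{-\ell}R+\|d_m(s)\|_\kY$, I obtain $\|d_m(t)\|_\kY\le M'm^{-\ell}R\,t+M'\int_0^t\|d_m(s)\|_\kY\,\d s$. Gronwall's inequality then yields $\|d_m(t)\|_\kY\le m^{-\ell}R(\e^{M't}-1)$, and adding the bound on $\Q_m\Phi^t$ gives the claimed estimate $\|\Phi^t(U^0)-\Phi_m^t(U^0)\|_\kY\le m^{-\ell}R\,\e^{M't}$.

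The hard part — and the reason the threshold $m_*$ is needed — is that the Lipschitz step is only valid while both $\Phi^s(U^0)$ and $\Phi_m^s(U^0)$ stay in the ball $\kB^{R+\delta}_0$ on which $M'_0[R+\delta]$ bounds $\D B$. For $\Phi^s(U^0)$ this follows from \eqref{e.Y_ellCompare} and \eqref{e.PhiGalErrorCond}, but membership of $\Phi_m^s(U^0)$ must be proved, so I would close the argument by continuation: on the maximal subinterval $[0,T_m]\subseteq[0,T]$ on which $\Phi_m^s(U^0)\in\kB^{R+\delta}_0$ the estimate above holds and gives $\|\Phi_m^t(U^0)\|_\kY\le R+m^{-\ell}R\,\e^{M'T}$. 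Choosing $m_*$ so large that $m^{-\ell}R\,\e^{M'T}<\delta$ for all $m\ge m_*$ keeps $\Phi_m^t(U^0)$ strictly inside $\kB^{R+\delta}_0$; combined with the $m$-independent local existence time $T_*$ from the first part this prevents the truncated solution from leaving the ball or ceasing to exist before $T$, so $T_m=T$ and the estimate holds on all of $[0,T]$. The delicate point is exactly this coupling between the a priori ball bound needed to run Gronwall and the error estimate used to establish that bound; the remaining steps are a routine combination of \eqref{eqn:proj_est} with the regularity from Theorem~\ref{t.semiflow}.
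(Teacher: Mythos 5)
The paper gives no proof of this lemma (it is quoted from \cite{Wulff2016}), but your argument is correct and is exactly the standard one used there: split the error as $\Q_m\Phi^t+(\P_m\Phi^t-\Phi_m^t)$, bound the first piece by $m^{-\ell}R$ via \eqref{eqn:proj_est}, run Gronwall on the second via the variation-of-constants formula, and close the a priori ball bound on $\Phi_m^t$ by a continuation argument that fixes $m_*$. Your identification of the bootstrap between the ball bound and the error estimate as the one delicate point is also the right emphasis.
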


Let $\psi^h_m(u_m^0)$ be the exponential Runge-Kutta integrator  $\Psi$ applied to the
projected semilinear evolution equation \eqref{eqn:see_proj} and  $w_m=w_m(u^0_m,h)$ its stage vector and  set $W_m(\cdot, h) =w_m(\P_m \cdot, h)$,
$\Psi^h_m=\psi^h_m\circ\P_m$. 
Similar to Lemma  
\ref{l.see_proj_err}, we have the following result:
 .

\begin{lemma}[Projection error of numerical method]
\label{l.reg-proj-num-method}
  Assume  (A1), (A2), (B), and   (EXP).  Let $R>0$.
 Then there is $h_*>0 $ such that Theorem \ref{thm:num_method_regularity} applies to  the stage vector $W_m$ and numerical method $\Psi_m$ of the projected
system \eqref{eqn:see_proj}  
with uniform bounds in   $m\geq 0$. 
Moreover,  if  $\ell \in I$, $\ell>0$,  then for
$m\geq 0$ we get 
\begin{subequations}
  \begin{equation}\label{e.wwm}
    \sup_{ \substack{  U\in\kB^{R/2}_\ell
        \\ h\in[0,h_*]  }    }
    \norm{W(U,h)-W_m(U,h)}{\kY^s}=\kO(m^{-\ell})
  \end{equation}
and
 \begin{equation}\label{e.psipsim}
    \sup_{ \substack{  U\in \kB^{R/2}_\ell
        \\ h\in[0,h_*]  }    }
    \norm{\Psi(U,h)-\Psi_m(U,h)}{\kY}=\kO(m^{-\ell}).
  \end{equation}
\end{subequations}
 The bounds on   $h_*$,  $\Psi_m$ and $W_m$   and the order constants 
depend only on $R$, the bounds  from   (B) on $B$ and its derivatives on balls of radius $R$ and
 on the bounds of the constants of the numerical method from (EXP).
\end{lemma}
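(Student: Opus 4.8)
The plan is to mirror the proof of Lemma \ref{l.see_proj_err} for the semiflow, exploiting that $W$ and $W_m$ (respectively $\Psi$ and $\Psi_m$) are defined by the same fixed-point equation \eqref{e.Pi-W}, differing only through the replacement of $B$ by the truncated nonlinearity $B_m(U)=\P_m B(\P_m U)$ and of $A$ by $A_m=\P_m A$. The key observation is that, since $\a(hA)$ and $\b(hA)$ act by functional calculus and $\P_m$ is a spectral projection commuting with $A$, we have $\P_m\a(hA)=\a(hA_m)\P_m$ and likewise for $\b$, so the projected method is genuinely the exact method applied to \eqref{eqn:see_proj}. The first step is therefore to record that Theorem \ref{thm:num_method_regularity} applies uniformly in $m$ to $W_m$ and $\Psi_m$, which follows because the bounds $M_\a$, $M_\b$, $M[R]$, $M'[R]$ controlling the contraction constant and the ball-invariance in \eqref{e.Pi-Fix} are unaffected by the projection (indeed $\|\P_m\|_{\kY\to\kY}\leq 1$ and $B_m$ satisfies the same bounds from (B) as $B$).

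Next I would estimate $\norm{W(U,h)-W_m(U,h)}{\kY^s}$ directly from the fixed-point equations. Writing $E:=W-W_m$, subtract the two copies of \eqref{e.Pi-W} to obtain
\begin{equation*}
E = h\bigl[\a(hA)B(W)-\a(hA)\P_m B(\P_m W_m)\bigr] + \bigl[\exp(h\c A)U\1-\exp(h\c A)\P_m U\1\bigr].
\end{equation*}
The bracket coming from the initial data is $\exp(h\c A)\Q_m U\1$, which by \eqref{eqn:proj_est} (the $\Q_m$ estimate) and (A2) is bounded by $\kO(m^{-\ell})\norm{U}{\kY_\ell^s}$. For the nonlinearity term I would split
\begin{equation*}
\a(hA)B(W)-\a(hA)\P_m B(\P_m W_m)=\a(hA)\Q_m B(W)+\a(hA)\P_m\bigl[B(W)-B(\P_m W_m)\bigr].
\end{equation*}
The first piece again gains a factor $m^{-\ell}$ from $\Q_m$ acting on $B(W)\in\kY_\ell^s$ (using that $W$ takes values in $\kY_\ell$ by the regularity theorem and the bounds from (B)). The second piece is controlled by the Lipschitz bound $M'[R]$ of $B$ applied to $\norm{W-\P_m W_m}{\kY^s}\leq\norm{\Q_m W}{\kY^s}+\norm{E}{\kY^s}$, where $\norm{\Q_m W}{\kY^s}=\kO(m^{-\ell})$ by \eqref{eqn:proj_est}. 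Collecting terms yields $\norm{E}{\kY^s}\leq \kO(m^{-\ell}) + hM_\a M'[R]\norm{E}{\kY^s}$, and since $h_*$ was chosen so that $hM_\a M'[R]\leq 1/2$, the error term can be absorbed, giving \eqref{e.wwm}. Finally, \eqref{e.psipsim} follows by inserting this bound into \eqref{e.psi}: subtracting the two update formulas produces the analogous $\exp(hA)\Q_m U$ term, a $\b(hA)\Q_m B(W)$ term, and a $\b(hA)\P_m[B(W)-B(\P_m W_m)]$ term, each of which is $\kO(m^{-\ell})$ by the same three mechanisms together with the just-proved estimate on $E$.

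The main obstacle I anticipate is bookkeeping the three distinct sources of the $m^{-\ell}$ gain and verifying that each truncation genuinely lands in $\kY_\ell$ with a uniform bound so that the $\Q_m$ estimate in \eqref{eqn:proj_est} can be invoked; this is exactly where assumption (B) at regularity level $\ell$, and the fact that $W(U,h)\in\kY_\ell$ with bounds uniform in $h\in[0,h_*]$ (from Theorem \ref{thm:num_method_regularity} applied on the scale of spaces), must be combined carefully. The implicit case also requires checking that the contraction argument closes simultaneously for the exact and projected methods with the same $h_*$, but since the contraction constant depends only on the (projection-independent) quantities $M_\a$, $M'[R]$, this is immediate once the uniformity in $m$ in the first part is established.
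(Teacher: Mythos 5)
Your proposal is correct and follows essentially the same route as the paper: the same three-term decomposition of $W-W_m$ (the $\exp(\c hA)\Q_m U$ term, the $\a(hA)\Q_m B(W)$ term using $B(W)\in\kY_\ell^s$, and the Lipschitz term absorbed via $h_*M_\a M'[R]\le 1/2$), followed by inserting the stage-vector bound into \eqref{e.psi}. The only cosmetic difference is that the paper uses $\P_m W_m=W_m$ directly rather than your extra (harmless) $\Q_m W$ term.
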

  
 \begin{proof}
 The fact that Theorem \ref{thm:num_method_regularity} applies to the projected system uniformly in $m\geq 0$ is immediate from its proof. Moreover using (\ref{e.Pi})  we find
  \begin{align}
    \norm{W(U,h)-W_m(U,h)}{\kY^s} &\leq
    \norm{\exp(\c hA)}{\kY^s\to\kY^s}\norm{\Q_mU}{\kY}  + 
\norm{h \a (hA)  \Q_mB(W)}{\kY^s}  \notag\\
    & \quad 
+h\norm{\a(hA)}{\kY^s\to\kY^s} \norm{\P_m(B(W)-B(W_m)))}{\kY^s}  \notag\\
    &\leq   \norm{\Q_m U}{\kY}  + h M_\a   m^{-\ell}M_\ell[R]
     \notag \\
& \quad+ h  M_\a\norm{\P_m B(W(U,h))-\P_m B(W_m(U,h))}{\kY^s}
 \notag   \\
    & \leq  
    \norm{U}{\kY_\ell}m^{-\ell}+  hM_\a m^{-\ell}M_\ell[R]
   \notag  \\
    & \qquad+ h  M_\a M' \norm{W(U,h)-W_m(U,h)}{\kY^s}. \label{e.WWm-est}
  \end{align}
Here $M' = M'_0[R]$ and  
 we used (A1), (A2), (EXP)  and  \eqref{eqn:proj_est}.
 Hence
 \[
 \sup_{ \substack{  U\in\kB^{R/2}_\ell
        \\ h\in[0,h_*]  }    }
        \norm{W(U,h)-W_m(U,h)}{\kY^s} \leq   \frac{ m^{-\ell}R/2+  h_*M_\a m^{-\ell}M_\ell[R]}{1-h_*  M_\a M' }
 \]
 proving \eqref{e.wwm}. 
  For the numerical method using \eqref{e.psi}, Lemma \ref{l.ahA}, (A1), (A2) and \eqref{eqn:proj_est} we obtain 
  \begin{align}
   \norm{\Psi^h(U)-\Psi^h_m(U)}{\kY} & \leq  
 \norm{\exp(hA) }{\kY\to\kY}
 \norm{\Q_m U}{\kY}  +   \| \Q_m \b(hA) B(W)\|_{\kY^s} \notag \\
& \quad  
+ h \|\b(hA)\|_{\kY^s\to \kY} \norm{\P_m(B(W)- B(W_m))}{\kY^s} \notag \\
   & \leq  \norm{U}{\kY_\ell}m^{-\ell}  + 
h\|\b(hA)\|_{\kY^s\to \kY}   m^{-\ell} M_\ell[R] \notag   \\
&\quad +   h\norm{\b(hA)}{\kY^s\to \kY}  M'\norm{W(U)-W_m(U)}{\kY^s}
 \notag \\
   & \leq  \norm{U}{\kY_\ell}m^{-\ell}  +hM_\b  m^{-\ell} M_\ell[R]
\notag  +  hM_\b   M' \kO(m^{-\ell}).
\label{e.psipsimHelp}
  \end{align}
  Here we used \eqref{e.wwm} in the last line.
\end{proof} 
  Note that  an analogous result has been obtained, for A-stable Runge-Kutta methods,
in \cite[Lemma 4.3]{Wulff2016}.

%%%%%%%%%%%%%%%%%%%%%%%%%%%%%%%%%%%%%%%%%%%%%%%%%%%%%%%%%%%%%%%%%%%%%%%%%%%%%%%%%%%%%%%%%%%%%%%%%%%

\section{Trajectory error bounds for non-smooth data}
\label{sec:traj}
 
In this section we prove our main result for exponential Runge-Kutta methods:

\begin{theorem}[Trajectory error for nonsmooth data]
 \label{thm:expInt_convergence}
  Assume 
  (A1), (A2) and (B) and apply an exponential Runge-Kutta method \eqref{e.expInt} subject to 
  (EXP) to  \eqref{eqn:see}. Let
 $\ell \in I^-$, $\ell >0 $, and fix $T>0$ and $R>0$. Then there exist constants
  $h_*>0$, $c_1>0$, $c_2>0$  such that for every   $U^0 $
 with
  \begin{equation}\label{e.condPhiGlobal}
   \|\Phi^t(U^0)\|_{\kY_\ell} \leq R, \quad\mbox{for}~~t\in[0,T]
  \end{equation}
  and for all
  $h\in[0,h_*]$   we have
  \begin{equation}
  \| \Phi^{nh}(U^0) -( \Psi^h)^n(U^0)\|_\kY \leq c_1e^{c_2nh}h^{\min(\ell,p)},
 \label{e.globTrajEst}
 \end{equation}
 provided that $nh\leq T$.
 The constants $h_*$,
  $c_1$ and $c_2$ depend only on  $R$, $T$,  the bounds from (EXP) on  the numerical method
  and the bounds afforded by (B).
\end{theorem}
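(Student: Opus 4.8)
The plan is to realize the strategy announced in the introduction: compare the semiflow $\Phi$ and the integrator $\Psi$ through the Galerkin truncations $\Phi_m$, $\Psi_m$ of Section~5 and then balance the resulting errors. I would split
\begin{align*}
\Phi^{nh}(U^0)-(\Psi^h)^n(U^0)
&=\underbrace{\bigl[\Phi^{nh}(U^0)-\Phi_m^{nh}(U^0)\bigr]}_{(\mathrm I)}
+\underbrace{\bigl[\Phi_m^{nh}(U^0)-(\Psi_m^h)^n(U^0)\bigr]}_{(\mathrm{II})}\\
&\quad+\underbrace{\bigl[(\Psi_m^h)^n(U^0)-(\Psi^h)^n(U^0)\bigr]}_{(\mathrm{III})},
\end{align*}
and fix the truncation level $m\sim h^{-1}$ only at the end. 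Term $(\mathrm I)$ is handled at once: since $\|\Phi^t(U^0)\|_{\kY_\ell}\le R$ on $[0,T]$, Lemma~\ref{l.see_proj_err} gives $\|(\mathrm I)\|_\kY=\kO(m^{-\ell})$ uniformly for $nh\le T$, with the factor $e^{tM'}$ already accounted for.

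For $(\mathrm{II})$ I would exploit that on the range of $\P_m$ the operator $A_m=\P_m A$ is bounded, so \eqref{eqn:see_proj} is an ODE on which $\Psi$ runs at its full classical order $p$. Telescoping $\Phi_m^{nh}-(\Psi_m^h)^n=\sum_{j=0}^{n-1}(\Psi_m^h)^{j}\bigl[\Phi_m^h-\Psi_m^h\bigr]\Phi_m^{(n-1-j)h}$ reduces $(\mathrm{II})$ to the one-step error of $\Psi_m$ against $\phi_m$, propagated by the stable map $\Psi_m^h$. Because exponential integrators integrate the linear part exactly, the classical order-$p$ local error is $\kO(h^{p+1})$ with a constant governed by the time derivatives $\partial_t^{p}\bigl(B_m(u_m(t))\bigr)$ rather than by powers of $A_m u_m$. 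By Theorem~\ref{t.semiflow}, valid for the truncated flow uniformly in $m$ through Lemma~\ref{l.see_proj_err}, the trajectory stays in $\kB_\ell^{R'}$ with $R'$ independent of $m$; since $u_m=\P_m u_m$, the estimate $\|u_m\|_{\kY_k}\le m^{k-\ell}\|u_m\|_{\kY_\ell}$ from \eqref{eqn:proj_est} then yields $\|\partial_t^{p}B_m(u_m)\|_\kY=\kO(m^{\max(p-\ell,0)})$, the required derivatives of $B$ being supplied by (B). Summing the $n=\kO(h^{-1})$ local errors gives $\|(\mathrm{II})\|_\kY=\kO\bigl(h^{p}m^{\max(p-\ell,0)}\bigr)$.

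The delicate term is $(\mathrm{III})$, which I expect to be the main obstacle. Telescoping $(\Psi_m^h)^n-(\Psi^h)^n=\sum_{j=0}^{n-1}(\Psi^h)^{\,n-1-j}\bigl[\Psi_m^h-\Psi^h\bigr](\Psi_m^h)^{j}$, each summand carries the one-step difference $\Psi^h-\Psi_m^h$ evaluated at $(\Psi_m^h)^{j}(U^0)$. Lemma~\ref{l.reg-proj-num-method} bounds this only by $\kO(m^{-\ell})$ per step, and naively summing $n\sim h^{-1}$ terms would create a spurious factor $h^{-1}$ that destroys the estimate. The resolution is that for $j\ge1$ the iterate $(\Psi_m^h)^{j}(U^0)$ lies in the range of $\P_m$, so the dominant contribution $\|\Q_m(\cdot)\|_\kY$ in the proof of Lemma~\ref{l.reg-proj-num-method} vanishes and, since the nonlinear increment of \eqref{e.psi} carries a factor $h$, the one-step error improves to $\kO(h\,m^{-\ell})$; only the single term $j=0$, evaluated at the unprojected $U^0$, keeps the full $\kO(m^{-\ell})$. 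The $h$ gained at each step thus cancels the $h^{-1}$ from the step count, and propagating by the stable $\Psi^h$ gives $\|(\mathrm{III})\|_\kY=\kO(m^{-\ell})$.

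Throughout I would use the stability estimate $\|\Psi^h(U)-\Psi^h(\tilde U)\|_\kY\le e^{c_2 h}\|U-\tilde U\|_\kY$ (and the same for $\Psi_m^h$), which follows from $\|e^{hA}\|_{\kY\to\kY}\le1$ by (A2), the bound $M_\b$, and the Lipschitz dependence of the stage vector $W$ on $U$ furnished by the contraction in Theorem~\ref{thm:num_method_regularity}; the accumulated Lipschitz constants together with the $e^{tM'}$ of Lemma~\ref{l.see_proj_err} assemble into the factor $e^{c_2 nh}$, and a routine induction keeps all iterates in fixed balls so that the bounds apply whenever $nh\le T$. Collecting the three pieces gives $\|\Phi^{nh}(U^0)-(\Psi^h)^n(U^0)\|_\kY=\kO\bigl(m^{-\ell}+h^{p}m^{\max(p-\ell,0)}\bigr)$, and choosing $m=\lceil h^{-1}\rceil$ balances the two contributions to $\kO(h^{\min(\ell,p)})$, which is \eqref{e.globTrajEst}. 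The two steps demanding genuine care are the tracking of the $m$-dependence of the projected local error in $(\mathrm{II})$ and the cancellation of the step factor in $(\mathrm{III})$.
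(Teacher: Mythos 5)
Your proposal is correct and follows essentially the same route as the paper: the identical three-term decomposition through $\Phi_m$ and $\Psi_m$, the same use of Lemmas~\ref{l.see_proj_err} and \ref{l.reg-proj-num-method}, the same $m$-dependent local-error bound $\kO(h^{p+1}m^{[p-\ell]_+})$ for the projected system (the paper tracks this via the $(p+1)$-st $h$-derivatives of $\hat\Phi_m$ and $\hat\Psi_m$ in Lemmas~\ref{l.phimDerivs} and \ref{lem:psim-m-dep-deriv}), and crucially the same observation that for iterates in the range of $\P_m$ the one-step difference $\Psi^h-\Psi^h_m$ gains a factor $h$, which is exactly the paper's Step~3. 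The only detail you gloss over is that the $\kY_\ell$-boundedness of the numerical iterates $(\Psi^h_m)^j(U^0)$, needed to invoke Lemma~\ref{l.reg-proj-num-method}, is obtained from the already-proved estimate for term (II) combined with the inverse estimate $\|\cdot\|_{\kY_\ell}\le m^\ell\|\cdot\|_{\kY}$ on $\P_m\kY$, as in \eqref{e.psim-Ell}.
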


Before we prove this result we provide an example of a linear evolution equation and some numerical evidence
where the estimates are sharp.

\begin{example} \rm
Applying the exponential Euler method  to a linear evolution equation
$\dot U = AU + B U$ with $A$ skew-symmetric and $[A,B]=0$ gives
\[
U(nh) - U^n = \e^{nhA}(\e^{nhB} - (\id + h C(hA))^n)U^0.
\]
where $C(hA) = \varphi_1(-hA)B$. Assume that ${\rm spec}(A) = \i \Z$ and let $A e_k=\i k e_k$, $B e_k = \lambda_k e_k$, $\|e_k\|_\kY=1$. Let $h = \pi/k$, $n=k$.  
Then we compute that $C(hA) e_k = -\frac{2\i}{\pi} \lambda_k e_k$ and so 
 \[
  (\id + h C(hA))^n)e_k = 
  (\e^{-2\i\lambda_k}+ O(h) )e_k 
 \]
 which is not close to $\e^{nhB} e_k = \e^{\pi\lambda_k} e_k $, so the global error is $O(1)$ for $U^0=e_k$.
 If we choose $U =O(k^{-\ell}) e_k$ then $\|\varphi_1(-hA)U -U\|_\kY = O(k h k^{-\ell}) = O(h^\ell) $ and so
 we get an convergence error of order $h^\ell$. So the estimate of Theorem \ref{thm:expInt_convergence} 
 is sharp in this case.
 \label{ex.EstSharp}
\end{example}

\begin{figure}[h]
  \begin{center}
  \vspace*{-6cm}
% \vspace*{-5.5cm}
% \hspace*{-0.9cm}
  \includegraphics[scale=0.6]{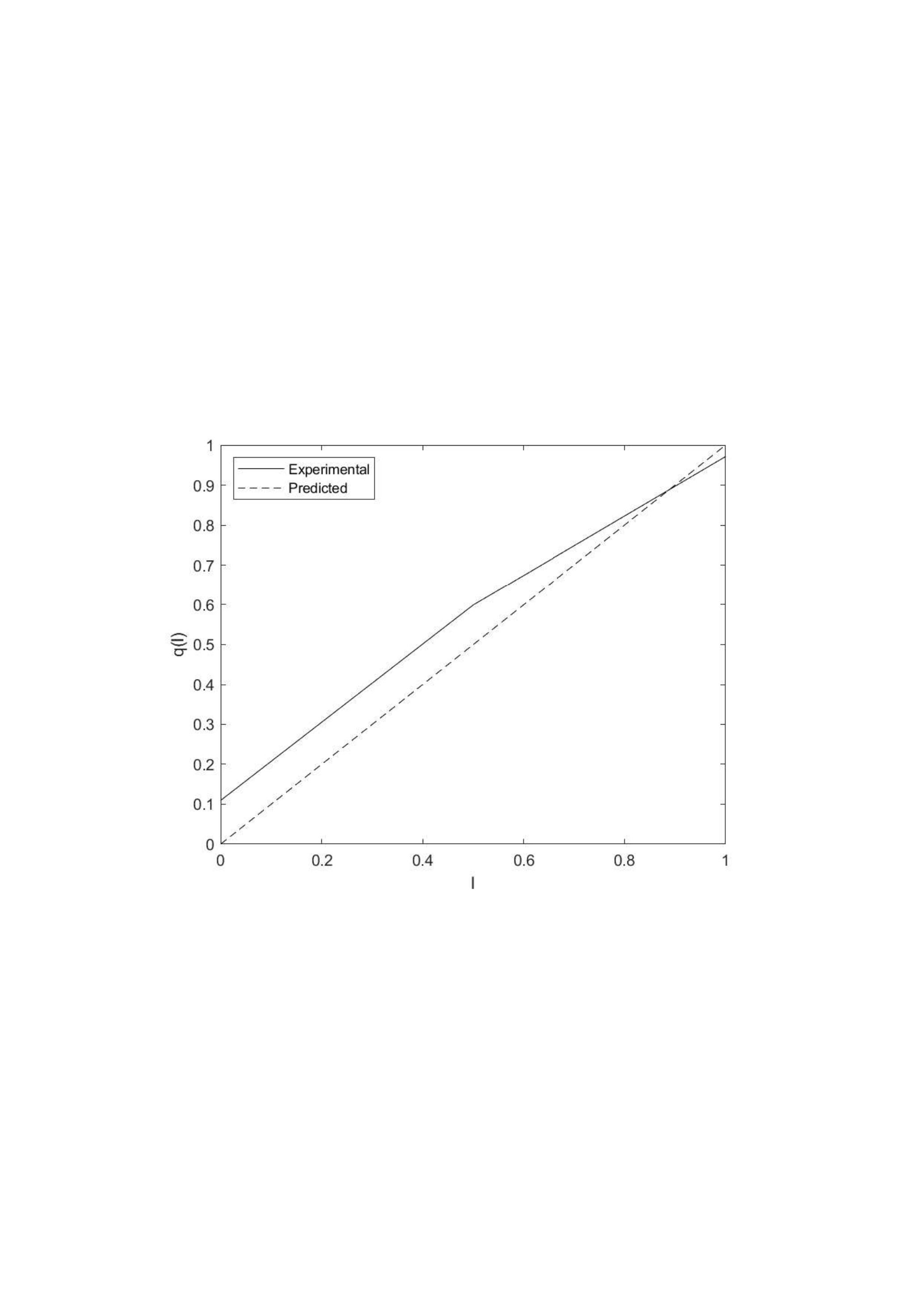}
  \end{center}
    \vspace*{-6cm}
%\vspace*{-4.9cm}
  \caption{Plot of a numerical estimate of $q(\ell)$ against $\ell$
    for the exponntial Euler method applied to the semilinear wave
    equation, with the prediction of Theorem~\ref{thm:expInt_convergence}
    for comparison.}
  \label{fig:expEuler_swe}
\end{figure}
 
 \begin{example}\rm
In  Figure \ref{fig:expEuler_swe} we display  the order of
convergence of the exponential Euler method  (see Example
\ref{ex.expRK}) which has classical order $p=1$ applied to the semilinear
wave equation $u_{tt} = u_{xx} - V'(u)$, $x\in [0,2\pi]$, with periodic boundary conditions and  $V'(u)=u-4u^2$ for   $\ell=j/2$, $j=0,\ldots, 6$, on the integration
interval $t \in [0,0.5]$,
using a fine spatial mesh (we use $N=1000$ grid points on $[0, 2\pi]$). 
As in \cite{Wulff2016}  we choose the   initial values $U^0= (u^0, v^0) \in \kY_\ell$ where
\[
u^0(x) =
 \sum_{k=0}^{N=1} \frac{c_u}{k^{\ell + 1/2+ \epsilon}} (\cos kx
+   \sin kx),
\quad
v^0(x)  =
 \sum_{k=0}^{N=1} \frac{c_v}{k^{\ell + 1/2+ \epsilon}} (\cos kx
+ \sin kx).
\]
Here $c_u$ and $c_v$ are such that $\|U^0\|_{\kY_\ell}= 1$, with $U^0 = (u^0, v^0)$, 
and  $\epsilon = 10^{-8}$.  
%We   integrate the semilinear wave equation with the above initial data for the time steps  $h = 0.1, 0.095, 0.09, 0.085,\ldots, 0.05$,  when $\ell>0$. At $\ell=0$, to reduce computational effort, we only used   the time steps  $h = 0.1,  0.09, \ldots 0.05$. To estimate the trajectory error,  we compare the numerical solution  to a solutioncalculated using a much smaller time step, $\tilde h = 10^{-3}$ for $\ell>0$ and $\tilde h = 10^{-4}$ for $\ell=0$.  From the assumption $E_n(h) = c h^{q}$ we get $\log E_n(h) = \log c + q \log h$. Fitting a line to those data, we take the gradient of the line as our estimated order of convergence of the trajectory error.
Approximating the order of convergence of the method  numerically as in \cite{Wulff2016}
we see that the numerical data confirm the theoretically predicted order of convergence
$q(\ell) = \ell$ for initial data in $\kY_\ell$, $\ell \leq p$ of Theorem \ref{thm:expInt_convergence}.
Note that the order of convergence does not decrease to exactly $0$ at $\ell=0$   because we simulate a space-time discretization rather than a time semidiscretization.  

\end{example}

To prove Theorem \ref{thm:expInt_convergence} we  analyze the dependence on $m$ of the local
error of an exponential Runge-Kutta method \eqref{thm:expInt_convergence} applied to the Galerkiin truncated equation (\ref{eqn:see_proj}) for low regularity initial data under the assumptions (A1), (A2), (B) and (EXP). 
As in \cite{Wulff2016} for $A$-stable Runge-Kutta methods, by coupling $m$ and $h$ and balancing the Galerkin truncation error and 
trajectory error of the Galerkin truncated system, we  prove our convergence result, Theorem \ref{thm:expInt_convergence}.

 %%%%%%%%%%%%%%%%%%%%%%%%%%%%%%%%
 %%%%%%%%%%%%%%%%%%%%%%%%%%%%%%%%%

\subsection{Some lemmas}\label{ss.Prelim}
We first present some  lemmas that will be needed in the proof.
Let $\hat\Phi^t_m = \Phi^t_m-\e^{tA_m}\P_m $, where $A_m = \P_m A$. In the following let $[a]_+ =  \max(a,0)$ for $a\in \R$.
 
\begin{lemma}[$m$-dependent bounds for derivatives of $\Phi_m$ and $\hat\Phi_m$]\label{l.phimDerivs}
Assume   (A1), (A2) and (B) and 
choose 
$\ell\in I^-$, $T>0$ and $R>0$.
 Then  for all $U^0$  satisfying 
\begin{subequations}
 \begin{equation}\label{e.glob-cond-proj_flow_reg}
  \Phi^t_m(U^0) \in \kB_\ell^R\quad\mbox{for}\quad t\in [0,T],~~m\geq m_*,
  \end{equation}
  %\eqref{e.glob-cond-proj_flow_reg}
 and for all $k\in \N$, $N\geq k$, $\Phi_m(U^0) \in  \kC_b^{k}( [0,T];\kB_0^R)   $ with  $m$ dependent bounds on the derivatives that satisfy
 \begin{equation}   \norm{\partial_t^k\Phi^t_m(U^0)}{\kCb([0,T];\kY )}=  \kO(m^{[k-\ell]_+}) \label{eqn:proj_flow_m_dep} 
 \end{equation}
 and
\begin{equation}\label{e.hatphimDerivs}
\|\partial^k_t \hat\Phi_m^t(U^0)\|_\kY= O(m^{[k-\ell-1]_+}) + O(t m^{[k-\ell]_+}).
\end{equation}
\end{subequations}
 The  order
constants only depend on $T$,    $R$, \eqref{eqn:semigroup_bound}
and the bounds from  (B).
\end{lemma}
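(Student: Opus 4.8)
The plan is to reduce everything to quantitative, $m$-dependent bounds on the time derivatives of $u_m(t) := \Phi^t_m(U^0)$, exploiting that $u_m$ solves the Galerkin ODE $\partial_t u_m = A u_m + B_m(u_m)$ and stays in $\operatorname{range}\P_m$, where $A$ acts as a bounded operator of norm $\le m$. The membership $\Phi_m(U^0)\in\kCb^k([0,T];\kB_0^R)$ is already furnished by Theorem \ref{t.semiflow} applied uniformly in $m$ (cf.\ Lemma \ref{l.see_proj_err}), so the only new content is the dependence of the bounds on $m$. The governing heuristic is a \emph{regularity budget}: each factor $\partial_t$ trades one unit of spatial smoothness, while $u_m$ carries $\ell$ units for free from $\|u_m(t)\|_{\kY_\ell}\le R$; a power of $m$ is spent only once this budget is exhausted, which is the source of the exponent $[k-\ell]_+$.

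For the bound \eqref{eqn:proj_flow_m_dep} I would prove, by induction on $j$, the family of estimates
\[
\|\partial_t^j u_m(t)\|_{\kY_i} = \kO\!\left(m^{[i+j-\ell]_+}\right), \qquad i\ge 0,\ 0\le j\le k,
\]
uniformly on $[0,T]$. The base case $j=0$ is the hypothesis $\|u_m\|_{\kY_\ell}\le R$ combined with the projection estimate \eqref{eqn:proj_est} (and the embedding \eqref{e.Y_ellCompare} when $i\le\ell$), using $u_m=\P_m u_m$. For the inductive step I differentiate the ODE once, $\partial_t^j u_m = A\,\partial_t^{j-1}u_m + \partial_t^{j-1}B_m(u_m)$. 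The linear term is handled by the inductive hypothesis at level $i+1$, since $\|A\,\partial_t^{j-1}u_m\|_{\kY_i}\le\|\partial_t^{j-1}u_m\|_{\kY_{i+1}}$ by \eqref{e.Y_ellCompare}. The nonlinear term is expanded by the Fa\`a di Bruno formula into a sum of contributions $\D^r B_m(u_m)[\partial_t^{a_1}u_m,\dots,\partial_t^{a_r}u_m]$ with $a_1+\dots+a_r=j-1$, $a_s\ge1$. Writing $B_m=\P_m B\P_m$ and using that $B$ preserves $\kY_{i'}$-regularity for $i'\in I$ (assumption (B)), I would evaluate $\D^rB$ at a level $i'\in I$ chosen to exploit the budget --- at level $i$ itself when $i\le\ell$, and at level $\ell$ when $i>\ell$ --- and then transfer the result to $\kY_i$ by the projection estimate \eqref{eqn:proj_est}. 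In each case the accumulated power of $m$ is controlled by the superadditivity inequality $\sum_s [x_s]_+ \le \bigl[\textstyle\sum_s x_s\bigr]_+$ (valid since $x\mapsto[x-\ell]_+$ is convex and vanishes at $0$), which yields exponent $\le [i+j-\ell]_+$. Setting $i=0$ gives \eqref{eqn:proj_flow_m_dep}.

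For \eqref{e.hatphimDerivs} I would use the Duhamel representation $\hat\Phi^t_m(U^0)=\int_0^t \e^{(t-\tau)A_m}g(\tau)\,\d\tau$ with $g(\tau):=B_m(u_m(\tau))$, equivalently $\partial_t\hat\Phi^t_m = A_m\hat\Phi^t_m + g$, which iterates to
\[
\partial_t^k\hat\Phi^t_m = A_m^{\,k}\hat\Phi^t_m + \sum_{l=0}^{k-1} A_m^{\,k-1-l}\,\partial_t^l g(t).
\]
Since $u_m(t)\in\kB_\ell^R$ and $\ell\in I$, the driving term satisfies $g(t)\in\kY_\ell$ with bounds uniform in $m$, and $\|\e^{sA_m}\|_{\kY_\ell\to\kY_\ell}\le1$ by (A2); hence $\|\hat\Phi^t_m\|_{\kY_\ell}=\kO(t)$, and applying $A_m^{\,k}$ together with \eqref{eqn:proj_est} produces the term $\kO(t\,m^{[k-\ell]_+})$. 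For the sum, the derivatives $\partial_t^l g$ obey the same $\kY_i$-estimates as $\partial_t^l u_m$ by the identical Fa\`a di Bruno analysis, so each summand is bounded by $\|\partial_t^l g\|_{\kY_{k-1-l}} = \kO(m^{[k-1-\ell]_+})$, giving the term $\kO(m^{[k-\ell-1]_+})$ and hence \eqref{e.hatphimDerivs}.

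The main obstacle is the bookkeeping of powers of $m$ in the nonlinear term, complicated by the fact that $I$ may have gaps: $B$ may be differentiated only at regularities lying in $I$, so one cannot simply evaluate $\D^rB$ at the target level $i$. The remedy is that $\ell\in I^-$ guarantees the integer-spaced levels $\ell,\ell-1,\dots,\ell-\lfloor\ell\rfloor$ all lie in $I$, which suffices to run the induction provided one always transfers between nearby scales using \eqref{eqn:proj_est} and checks, via the superadditivity inequality, that the combined exponent never exceeds $[i+j-\ell]_+$. One must also verify that the order of differentiation of $B$ stays within the smoothness $N-\lceil\,\cdot\,\rceil$ afforded by (B); this holds because $k\le N$ and $N>\lceil L\rceil$.
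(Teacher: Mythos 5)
Your proposal is correct and follows essentially the same route as the paper: the bound \eqref{eqn:proj_flow_m_dep} is quoted there from \cite[Lemma~5.1]{Wulff2016}, whose proof is the induction-plus-Fa\`a-di-Bruno argument you sketch, and for \eqref{e.hatphimDerivs} the paper uses exactly your decomposition $\partial_t^k\hat\Phi^t_m=A_m^k\hat\Phi^t_m+\sum_{j=0}^{k-1}A_m^{k-1-j}\partial_t^j B_m(\Phi^t_m)$, with the Duhamel bound $\|\hat\Phi^t_m\|_{\kY_k}\le t\,m^{[k-\ell]_+}M_{\min(\ell,k)}[R]$ for the first term and the index-shifted Fa\`a di Bruno estimate $\|\partial_t^jB_m(u_m)\|_{\kY_{k-1-j}}=\kO(m^{[k-\ell-1]_+})$ for the sum. (Only cosmetic caveat: state the superadditivity inequality for $x\mapsto[x-c]_+$ with nonnegative arguments rather than as the literal $\sum_s[x_s]_+\le[\sum_s x_s]_+$, which is false for arguments of mixed sign; your parenthetical justification already shows you mean the correct version.)
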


\begin{proof}  \eqref{eqn:proj_flow_m_dep}  is shown in \cite[Lemma 5.1]{Wulff2016}.
To prove \eqref{e.hatphimDerivs} note that we have
\begin{align*}
\partial_t \hat\Phi^t_m(U^0) & =   A_m \Phi^t_m(U^0) + B_m(\Phi^t_m(U^0)) -A_m\e^{tA_m}\P_m U^0 \\
&  =   A_m\hat\Phi^t_m(U^0)  + B_m(\Phi^t_m(U^0)).
\end{align*}
Similarly
\[
  \partial_t^2\hat\Phi^t_m(U^0) =
   A_m^2\hat\Phi^t_m(U^0) + 
    \frac{\d}{\d t} \left(  B_m(\Phi^t_m(U^0)) \right) + 
A_m B_m(\Phi^t_m(U^0))   
\]
and more generally
\[
\partial^k_t \hat\Phi^t_m(U^0) =  A_m^k \hat\Phi^t_m(U^0)
+  \sum_{j=0}^{k-1} A_m^{k-1-j} \frac{\d^j}{\d t^j}\left( B_m(\Phi^t_m(U))\right) .
\]
Hence   for $U^0 \in \kY_\ell$ by \eqref{e.Y_ellCompare}
\begin{align}\label{e.k_deriv_hatPhim}
\|\partial^k_t \hat\Phi^t_m(U^0)\|_{\kY} 
& \leq  \| \hat\Phi^t_m(U^0) \|_{\kY_k}
+  \sum_{j=0}^{k-1} \| \frac{\d^j}{\d t^j} \left( B_m(\Phi^t_m(U^0))\right)\|_{\kY_{k-1-j}}
\end{align}
Moreover
\[
\hat\Phi^t_m(U^0) = \int_0^t \e^{(t-s) A_m} B_m(\Phi^s_m(U^0))\d s 
\]
and so by (A2) and (B)
\begin{align}
\| \hat\Phi^t_m(U^0) \|_{\kY_k}  &\leq  t\max_{s \in [0,t]} \| B_m(\Phi^s_m(U^0)) \|_{\Y_k}
\leq t \max_{s \in [0,t]} m^{[k-\ell]_+} \| B_m(\Phi^s_m(U^0)) \|_{\Y_\ell}\notag \\
& \leq t m^{[k-\ell]_+} M_{\min(\ell,k)}[R].
\label{e.hatPhi_kNorm}
\end{align}
Using the Fa\`a di Bruno
  formula \cite{FaadiBruno} we find that for any $i \in \N$,  $i\leq N$, with
  $u_m = \Phi^t_m(U^0) \in \kB_\ell^R$,
  \begin{align}
     \frac{\d^i}{\d t^i} \left( B_m(u_m) \right)
    &=   \sum_{1\leq \beta \leq i}\frac{i!
      \D^\beta_uB_m(u_m)}{j_1!\cdots j_i!} 
    \prod_{\alpha=1}^i\left(\frac{\partial_t^\alpha
        u_m}{\alpha!}\right)^{j_\alpha},
    \label{e.FaaDiBruno}
  \end{align}
  where  
 $\beta=j_1+\cdots+j_i$ and the    sum is over all $j_\alpha \in \N_0$, $\alpha = 1,\ldots, i$, with 
  $j_1+2j_2+\cdots +  ij_i=i$.
Using \eqref{eqn:proj_flow_m_dep} and the Faa di Bruno formula we then get 
\begin{align*}
 \left\|   \frac{i!
      \D^\beta_uB_m(u_m)}{j_1!\cdots j_i!} 
    \prod_{\alpha=1}^i\left(\frac{\partial_t^\alpha
        u_m}{\alpha!}\right)^{j_\alpha}  \right\|_\kY= \kO(m^n)
 \end{align*}
 with 
\begin{align}\label{e.n}
n= (\lceil \ell \rceil -\ell)j_{\lceil \ell \rceil}+\cdots+
(i-\ell)j_i\leq [i-\ell]_+.
\end{align}
Hence  for $N\geq i$, $\ell\geq 0$
\[
\| \frac{\d^i}{\d t^i} \left(  B_m(u_m)\right) \|_{\kY} =O(m^{[i-\ell]_+}).
 \]
Replacing $\kY$ by $\kY_{k-1-j}$, $\ell$ by $\ell-(k-1-j)$ and
 $i$ by $j$ we see
that  for $U \in \kY_\ell$
\[
\|\frac{\d^j}{\d t^j} \left(  B_m(u_m)\right) \|_{\kY_{k-1-j}}  = O(m^{[j - (\ell-(k-1-j))]_+})=
O(m^{[k-\ell-1]_+}).
\]
Plugging this into \eqref{e.k_deriv_hatPhim} and using  \eqref{e.hatPhi_kNorm} shows \eqref{e.hatphimDerivs}.
\end{proof}

Similar to Lemma \cite[Lemma 5.2]{Wulff2016} for $A$-stable Runge Kutta methods we have:

\begin{lemma}[$m$-dependent bounds for derivatives of   $W_m$]
  Assume  (A1), (A2) and (B), and apply an exponential Runge-Kutta method $\Psi$ 
  satisfying (EXP) to \eqref{eqn:see}.  Let $R>0$ and 
 $\ell\in I^-$ and
$k\in \N_0$ with 
  $\ell \leq k\leq N$.
Then there is
  $h_*>0$   such that  for $m\geq 0$
and    $i=1,\ldots, s$, 
\begin{subequations}
  \begin{equation}
    W_m^i(U,\cdot) \in \kC_b^{k}([0,h_*];\kB_0^R)
    \quad\mbox{
      for}   \quad i=1,\ldots,s
    \label{e.wmpsim}
  \end{equation}
  with $m$-dependent bounds which are uniform in 
$U \in \kB_\ell^{R/2}$.
Moreover 
 \begin{equation}\label{e.deriv-wm-bound}
    \sup_{ \substack{  U\in \kB_\ell^{R/2}
        \\ h\in[0,h_*]  }    }\norm{\partial_h^{k}W_m(U,h)}{\kY^s} = \kO(m^{k-\ell})
  \end{equation}
and   
  \begin{equation}\label{e.derivBmbound}
    \|\partial_h^j \P_m B(W_m(U,h))\|_{\kY^s} = \kO(m^{j-\ell})
  \end{equation}
  \end{subequations}
  for all $h\in [0,h_*]$, $U \in\kB^{R/2}_\ell$ and $k\geq j
  \geq \ell$.
The  order constants in  \eqref{e.deriv-wm-bound} and \eqref{e.derivBmbound} depend only $R$, the bounds of the constants of the numerical method from (EXP)
and the bounds from (B)  for $B$ and its  derivatives on balls of radius $R$. 
 \label{lem:wm-m-dep-deriv}
\end{lemma}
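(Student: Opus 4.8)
The plan is to differentiate the stage fixed-point equation and induct on the order of the $h$-derivative. Writing the stage equation \eqref{e.Pi} for the Galerkin-truncated system \eqref{eqn:see_proj} as $W_m = \exp(h\c A_m)\P_m U\1 + h\a(hA_m)\P_m B(W_m)$ with $A_m = \P_m A$ and $W_m \in \P_m\kY^s$, I first note that $A_m$ is a \emph{bounded} normal operator on the truncated space, so $h \mapsto \exp(h\c A_m)$, $\a(hA_m)$, $\b(hA_m)$ are smooth to all orders in $h$. Hence the $\kC_b^k$ statement \eqref{e.wmpsim} (now up to $k=N$) follows from the same contraction-mapping / implicit-differentiation argument as in Theorem \ref{thm:num_method_regularity}: the only obstruction there, the unboundedness of $A$, is removed by the truncation, at the price of $m$-dependent constants. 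The real content is the sharp $m$-dependence \eqref{e.deriv-wm-bound}, \eqref{e.derivBmbound}.

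I would establish, by induction on $j = 0, 1, \dots, k$, the scale-indexed bound $\sup_{U\in\kB_\ell^{R/2},\,h\in[0,h_*]}\|\partial_h^j W_m(U,h)\|_{\kY_\nu^s} = \kO(m^{[j+\nu-\ell]_+})$ for every $\nu\in I$. Differentiating the stage equation $j$ times and measuring in $\kY_\nu$ produces three kinds of terms. The semigroup term equals $(\c A_m)^j\exp(h\c A_m)\P_m U\1$ and is bounded, via (A2) and the commutation of $\exp(h\c A_m)$ with $\P_m$, by $\|A^{j}\P_m U\|_{\kY_\nu}\le m^{[j+\nu-\ell]_+}\|U\|_{\kY_\ell}$, a consequence of \eqref{eqn:proj_est} and \eqref{e.Y_ellCompare}. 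The Leibniz terms $\partial_h^{j-i}(h\a(hA_m))\,\partial_h^i\P_m B(W_m)$ with $i<j$ are treated by reading $\partial_h^{j-i}(h\a(hA_m))$ as a bounded map $\kY_{\nu+j-i}^s\to\kY_\nu^s$ via \eqref{e.hahA}, so that one only needs $\partial_h^i\P_m B(W_m)$ in $\kY_{\nu+j-i}$.

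The crux --- and the step I expect to be the main obstacle --- is getting the powers of $m$ to combine to exactly $m^{[j+\nu-\ell]_+}$, uniformly over all Fa\`a di Bruno terms. The device is to estimate $\partial_h^i B(W_m)$ in its \emph{natural} space rather than carry high-order bounds on $W_m$ itself: by \eqref{e.FaaDiBruno} and the induction hypothesis at $\nu=\ell-i$ one gets $\partial_h^i B(W_m)=\kO(1)$ in $\kY_{\ell-i}$ when $i\le\ell$, and $\partial_h^i B(W_m)=\kO(m^{i-\ell})$ in $\kY$ when $i>\ell$; one then \emph{boosts} into $\kY_{\nu+j-i}$ using $\|\P_m\|_{\kY_\mu\to\kY_{\mu+r}}\le m^r$ from \eqref{eqn:proj_est}, and in both cases the exponents add up to $m^{[\nu+j-\ell]_+}$. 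This is precisely what lets the argument avoid any smoothness of $B$ above $\kY_L$: the truncation supplies the missing regularity through $m$-factors, while $B$ is differentiated only on spaces $\kY_{\ell'}$ with $\ell'\in\{0\}\cup\{\ell-i\}\subseteq I$, which is guaranteed by $\ell\in I^-$.

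It remains to close the induction. The single Leibniz term with $i=j$ contains, through \eqref{e.FaaDiBruno}, the only occurrence of the top derivative, namely $h\a(hA_m)\P_m\D B(W_m)\,\partial_h^j W_m$; its remaining contributions involve only $\partial_h^\alpha W_m$ with $\alpha<j$ and are bounded as above. Since $\|h\a(hA_m)\P_m\D B(W_m)\|_{\kY_\nu^s\to\kY_\nu^s}\le hM_\a M'_\nu[R]$ (here $\nu\in I$ is used), choosing $h_*$ with $h_*M_\a M'_\nu[R]\le\tfrac12$ lets this term be absorbed into the left-hand side, exactly as in the contraction estimate of Theorem \ref{thm:num_method_regularity} (in the explicit case $\a$ is strictly lower triangular and one instead solves the stages successively, with the same bounds). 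This proves the scale-indexed bound, whose $\nu=0$ case is \eqref{e.deriv-wm-bound}; \eqref{e.derivBmbound} is then the $\nu=0$ instance of the Fa\`a di Bruno estimate for $\partial_h^j\P_m B(W_m)$ just obtained. All constants depend only on the stated data, since every bound used originates from (A2), (B), (EXP), Lemma \ref{l.ahA}, and \eqref{eqn:proj_est}--\eqref{e.Y_ellCompare}.
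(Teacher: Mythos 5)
Your proof follows essentially the same route as the paper's: differentiate the stage fixed-point equation in $h$, split the Leibniz/Fa\`a di Bruno terms according to whether the derivative order exceeds $\ell$, trade regularity for powers of $m$ via \eqref{eqn:proj_est}, and absorb the single top-order term $h\a(hA)\P_m\D B(W_m)\partial_h^k W_m$ by the contraction for small $h_*$; your scale-indexed induction over $\nu$ merely repackages the paper's combination of the uniform-in-$m$ regularity on $\kY_{\ell-j}$ (its \eqref{eqn:proj_wm_indep}) with an induction on $k$ at level $\nu=0$. The only caveat is that your auxiliary claim should be restricted to $\nu\le\ell$ (i.e.\ $\nu\in\{0\}\cup\{\ell-i: i=1,\dots,\lfloor\ell\rfloor\}$), since for $\nu>\ell$ the bound $M'_\nu[R]$ on $\D B(W_m)$ and the membership $W_m\in\kB^R_\nu$ are not available uniformly in $m$ --- but these are the only levels your argument actually invokes, so the proof of the stated lemma is unaffected.
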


\begin{proof}
To prove \eqref{e.deriv-wm-bound}, 
 differentiate \eqref{e.Pi} $k$ times in
  $h$: 
  \begin{equation}\label{e.derivwmInduction}
    \partial_h^k W_m = \partial_h^{k}(\exp(\c h A))\P_m U\1
    + 
    \sum_{j=0}^k{k \choose j}\partial_h^{k-j}(h\a( h  A) \P_m) \partial_h^j B(W_m).
  \end{equation}
 By  \eqref{eqn:proj_est}, 
  for $k\geq \ell$,
  \begin{align}
  \sup_{h\in [0,h_*]}
  \| \partial_h^k(\exp(\c h  A) \P_m \|_{\kY_\ell^s\to \kY^s}  &=
  \sup_{h\in [0,h_*]} \|  \c^k A^k \exp(\c h  A) \P_m \|_{\kY_\ell^s\to \kY^s} \notag\\
  & \leq  \|  \c^k\|_{\kY^s \to \kY^s}  \|\P_m \|_{\kY_\ell^s\to \kY^s_k} =
   O( m^{k-\ell}).
   \label{e.firstterm}
  \end{align}
Next for $0\leq j\leq \ell \leq k$
 \begin{align}
\|\partial^{k-j}_h ( h\a (h A))\P_m \|_{ \kY^s_{\ell-j}\to\kY^s}& \leq
\|\partial^{k-j}_h ( h\a (h A)) \|_{ \kY^s_{k-j}\to\kY^s} \|\P_m \|_{ \kY^s_{\ell-j}\to\kY_{k-j}^s}\notag\\
& \leq M_{h,\a}^{(k-j)} \|\P_m \|_{ \kY^s_{\ell-j}\to\kY_{k-j}^s}  = M_{h,\a}^{(k-j)} m^{k-\ell}.
\label{e.k-jDerivha}
 \end{align}
where we used  \eqref{e.hahA} (with $k$ replaced by $k-j$) and \eqref{eqn:proj_est}.
Moreover the regularity of the stage vector on $\kY_{\ell-j}^s$  (see  Theorem \ref{thm:num_method_regularity} and Lemma
  \ref{l.reg-proj-num-method})  gives 
  \begin{align}
     W_m^i(U,\cdot) \in \kC_b^{j} ([0,h_*]; \kB^R_{\ell-j}),
   \label{eqn:proj_wm_indep}
  \end{align}
for $i=1,\ldots, s$, $j=1\ldots, \lfloor\ell\rfloor$,
  with bounds uniform in $m\geq 0$ and 
$U \in \kB^{R/2}_\ell$.   
Using these two estimates 
we can obtain for the $j$-th term in the
sum of \eqref{e.derivwmInduction} for $0\leq j\leq \ell \leq k$ and
$h\in [0,h_*]$ the following:
\begin{align}
\| &\partial^{k-j}_h ( h \a(hA)) \partial_h^j \P_m
  B(W_m(U,h))\|_{\kY^s}\notag \\
& \leq
\|\partial^{k-j}_h ( h(\a (h A))\P_m \|_{ \kY^s_{\ell-j}\to\kY^s} \| \partial_h^j 
  B(W_m(U,h))\|_{\kY_{\ell-j}^s} \notag\\
& \leq M_{h,\a}^{(k-j)} m^{ k-\ell}   \| \partial_h^j 
  B(W_m(U,h))\|_{\kY_{\ell-j}^s} \leq O(m^{ k-\ell}).
\label{e.jLessEll-W}
\end{align}
To estimate the $j$th term in the sum of \eqref{e.derivwmInduction}  for $j>\ell$ and hence  prove \eqref{e.deriv-wm-bound} and \eqref{e.derivBmbound}  
 we proceed inductively for  $k = \lceil \ell \rceil,\ldots,N$.  If $\ell \in \N_0$ then the start
  of the induction is $k = \ell$, and \eqref{e.deriv-wm-bound} and \eqref{e.derivBmbound}  
  follow from Theorem~\ref{l.reg-proj-num-method}. If $\ell \notin \N_0$, then the
  start of the induction is $k = \lceil \ell \rceil > \ell$.  
  If $k=  \lceil \ell \rceil$ then
 the first term in  \eqref{e.derivwmInduction}  is of order
  $\kO(m^{k-\ell})$ by \eqref{e.firstterm}, and all  terms in the sum  of \eqref{e.derivwmInduction} are  $\kO(m^{k-\ell})$ for all $h\in[0,h_*]$ due to
 \eqref{e.jLessEll-W}   
  except for the last term. Hence,  
  \begin{equation}\label{e.derivwmest}
    \sup_{\substack{h\in [0,h_*]\\ U \in \kB^{R/2}_\ell  }}
    \| \partial_h^k W_m(U,h)\|_{\kY^s} \leq \kO(m^{k-\ell}) + 
    h_* M_\a\sup_{\substack{h\in [0,h_*]\\ U \in  \kB^{R/2}_\ell }}\| \partial_h^k B(W_m(U,h))\|_{\kY^s}.
  \end{equation}
  The Fa\`a di Bruno formula \eqref{e.FaaDiBruno} gives
  \begin{equation}\label{e.Bwm}
    \partial_h^k B(W_m(U,h)) = 
    \sum_{1\leq \beta \leq k}\frac{k! \D^\beta_wB_m(W_m(U,h))}{j_1!\cdots j_k!}
      \prod_{\alpha=1}^k\left(\frac{\partial_h^\alpha W_m(U,h)}{\alpha!}\right)^{j_\alpha}
  \end{equation}
  where  $\beta=j_1+\cdots+j_k$ and the  sum is over all $j_\alpha \in N_0$, $\alpha=1,\ldots, k$ with
  $j_1+2j_2+\cdots+ kj_k=k$. All terms in the sum in  \eqref{e.Bwm} contain $h$-derivatives of order at most $k-1$
  and are therefore bounded  independent of $m$
  except when $\beta=j_k=1$ and $j_\alpha=0$ for $\alpha\neq k$.  Therefore
  \begin{align}
    \sup_{\substack{h\in [0,h_*]\\ U \in \kB^{R/2}_\ell   }}
    \| \partial_h^k B(W_m(U,h))\|_{\kY^s} &
  %   \leq \kO(m^{k-\ell}) +   \sup_{\substack{h\in [0,h_*]\\ U \in \kB^{R/2}_\ell }}   \|\D B(W_m(U,h)) \partial_h^k W_m(U,h)\|_{\kY^s} \notag \\
  %
    & \leq \kO(m^{k-\ell}) + M'_0[R] \sup_{\substack{h\in [0,h_*]\\ U \in
        \kB^r_\ell  }} \| \partial_h^k W_m(U,h)\|_{\kY^s}.
    \label{e.Bwmderivest}
  \end{align}
  Plugging this into \eqref{e.derivwmest} gives 
  \eqref{e.deriv-wm-bound} for $k = \lceil \ell \rceil$ and $h_*$ small
  enough. This estimate and \eqref{e.Bwmderivest}
  also shows \eqref{e.derivBmbound} for $k=\lceil \ell \rceil$.

  Now assume these estimates hold true for all $\hat k \in \N_0$ with $\ell \leq \hat{k}\leq
  k-1$ and let $k\leq N$. Then the first term in    \eqref{e.derivwmInduction} is
  $\kO(m^{k-\ell})$ by \eqref{e.firstterm} and by \eqref{e.jLessEll-W} the terms in the sum  of \eqref{e.derivwmInduction} with $\ell\geq j$ are  $\kO(m^{k-\ell})$ as well. For $k>j> \ell$ we estimate
  \begin{align}
\| &\partial^{k-j}_h ( h \a(hA)) \partial_h^j \P_m
  B(W_m(U,h))\|_{\kY^s}\notag \\
& \leq
\|\partial^{k-j}_h ( h(\a(h A))\P_m \|_{ \kY^s \to\kY^s} \| \partial_h^j 
  B(W_m(U,h))\|_{\kY^s} \notag \\
  &  \leq M_{h,\a}^{(k-j)} m^{k-j} \| \partial_h^j 
  B(W_m(U,h))\|_{\kY^s}  \leq O(m^{k-j}) \kO(m^{j-\ell}) = O(m^{ k-\ell}) 
\label{e.jGreaterEll-W}
\end{align}
where we used \eqref{e.k-jDerivha} and the induction hypothesis \eqref{e.derivBmbound} for $j<k$. Therefore under the induction hypothesis  all
  terms in   \eqref{e.derivwmInduction} are
  $\kO(m^{k-\ell})$ except from the last term in the sum, hence
  \eqref{e.derivwmest} holds true under the induction
  hypothesis, and so,
 each term in the sum of  the Fa\`a di Bruno formula \eqref{e.Bwm} with 
$j_k=0$ is of order
$\kO(m^n)$  in the $\kY^s$ norm  with 
$n  \leq k-\ell$ as in \eqref{e.n} (with $i$ replaced by $k$).
 Hence \eqref{e.Bwmderivest}  remains valid, and from  \eqref{e.derivwmest} we deduce
\eqref{e.deriv-wm-bound}  and   \eqref{e.derivBmbound}.
\end{proof}

  %%%%%%%%%%%%%%%%%%%%

 Let $\hat\Psi_m =\Psi_m- \e^{hA_m}$.
\begin{lemma}[$m$-dependent bounds for derivatives of $\hat\Psi_m$]
  Assume  (A1), (A2) and (B), and apply an exponential Runge-Kutta method $\Psi$
  satisfying (EXP) to  \eqref{eqn:see}. Choose 
 $\ell\in I^-$ and
$k\in \N_0$ with 
  $\ell \leq k\leq N$.
  Let $R>0$.
Then there is
  $h_*>0$   such that  for $m\geq 0$
  \begin{equation}
     \Psi_m(U,\cdot) \in \kC_b^{k}([0,h_*];\kB_0^R)
    \label{e.psim}
  \end{equation}
  with $m$-dependent bounds which are uniform in 
$U \in \kB_\ell^{R/2} $.
Moreover 
\begin{equation}\label{e.deriv-hatpsim-bound}
    \sup_{ \substack{  U\in\kB_\ell^{R/2}    \\ h\in[0,h_*] }}
\norm{\partial_h^{k}\hat\Psi^h_m(U)}{\kY} = \kO(hm^{k-\ell}) +\kO(m^{[k-\ell-1]_+}) .
 \end{equation}
The  order constants in  \eqref{e.deriv-hatpsim-bound} depend only $R$, the bounds on the numerical method from (EXP) 
and the bounds afforded by (B) on balls of radius $R$. 
 \label{lem:psim-m-dep-deriv}
\end{lemma}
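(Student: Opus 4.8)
The estimate \eqref{e.deriv-hatpsim-bound} is the one-step analogue of the bound \eqref{e.hatphimDerivs} on $\hat\Phi_m$ in Lemma~\ref{l.phimDerivs}, and I would prove it by the same bookkeeping that controls $\partial_h^k W_m$ in Lemma~\ref{lem:wm-m-dep-deriv}, now applied to the output stage \eqref{e.psi} rather than to the stage equation \eqref{e.Pi}. The regularity claim \eqref{e.psim} comes first and is immediate: differentiating \eqref{e.psi} $k$ times in $h$ and invoking the regularity of the stage vector $W_m$ from Lemma~\ref{lem:wm-m-dep-deriv} (which already gives $W_m(U,\cdot)\in\kCb^{k}$) together with Lemma~\ref{l.ahA} for the smoothness of $h\mapsto\b(hA_m)$ shows $\Psi_m(U,\cdot)\in\kCb^{k}([0,h_*];\kB_0^R)$ with bounds uniform in $U\in\kB_\ell^{R/2}$. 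Unlike in Lemma~\ref{lem:wm-m-dep-deriv}, no induction on $k$ is needed here, because $\hat\Psi_m$ is given \emph{explicitly} by \eqref{e.psi} in terms of $W_m$, whose $h$-derivatives are already under control.

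The starting point for \eqref{e.deriv-hatpsim-bound} is the representation obtained by subtracting the linear part of \eqref{e.psi}: since $\Psi_m^h(U)=\e^{hA_m}\P_m U + h\,\b^T(hA_m)\P_m B(W_m(U,h))$ and $\b(hA_m)\P_m=\b(hA)\P_m$, we get
\[
\hat\Psi_m^h(U) = h\,\b^T(hA)\,\P_m B(W_m(U,h)).
\]
I would then apply the Leibniz rule in $h$ and, crucially, use the splitting underlying Lemma~\ref{l.ahA},
\[
\partial_h^{i}\bigl(h\b(hA)\bigr) = i\,A^{i-1}\b^{(i-1)}(hA) + h\,A^{i}\b^{(i)}(hA),
\]
which separates each differentiation of the prefactor $h\b(hA)$ into a piece that loses one power of $A$ but carries no factor $h$, and a piece that keeps the full factor $h$. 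This split is exactly what produces the two distinct orders in \eqref{e.deriv-hatpsim-bound}.

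The core of the argument is a finite case analysis over the Leibniz index $j$, bounding $\partial_h^j B(W_m(U,h))$ by Lemma~\ref{lem:wm-m-dep-deriv}: for $j\le\ell$ it is bounded in $\kY_{\ell-j}^s$ uniformly in $m$ by \eqref{eqn:proj_wm_indep}, while for $j>\ell$ it is $\kO(m^{j-\ell})$ in $\kY^s$ by \eqref{e.derivBmbound}. For each term I would boost regularity with the spectral projection via \eqref{eqn:proj_est}, using $\|\P_m\|_{\kY_a\to\kY_b}\le m^{b-a}$. The $h$-carrying piece $h\,A^{k-j}\b^{(k-j)}(hA)\P_m$ needs the input lifted to $\kY_{k-j}^s$, costing $m^{(k-j)-(\ell-j)}=m^{k-\ell}$, and hence contributes $\kO(h\,m^{k-\ell})$; the piece $A^{k-j-1}\b^{(k-j-1)}(hA)\P_m$ needs the input only in $\kY_{k-j-1}^s$, costing $m^{(k-j-1)-(\ell-j)}=m^{k-\ell-1}$, and hence contributes $\kO(m^{[k-\ell-1]_+})$ (the $[\,\cdot\,]_+$ absorbing the case $k=\lceil\ell\rceil$, where no lift is needed). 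The same exponent arithmetic works verbatim in the range $\ell<j<k$ using the $\kO(m^{j-\ell})$ bound, and the case $j=k$ (undifferentiated prefactor $h\b^T(hA)$) gives $\kO(h\,m^{k-\ell})$ directly. Summing the finitely many terms yields \eqref{e.deriv-hatpsim-bound}.

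I expect the only real obstacle to be the exponent bookkeeping: one must verify that \emph{every} term of the Leibniz/splitting expansion lands in one of the two advertised orders, tracking the projection lifts through both the integer and non-integer cases of $\ell$, with the start $k=\lceil\ell\rceil$ being the delicate one, since there the lift exponent $k-\ell-1$ may be negative and must be replaced by $[k-\ell-1]_+$. All the analytic ingredients---the $\b$-derivative bounds from (EXP) and Lemma~\ref{l.ahA}, the $W_m$-derivative bounds from Lemma~\ref{lem:wm-m-dep-deriv}, and the projection estimates \eqref{eqn:proj_est}---are already available, so once the indices are organized the estimate follows.
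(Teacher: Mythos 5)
Your proposal is correct and follows essentially the same route as the paper: the same Leibniz expansion of $\partial_h^k\bigl(h\b^T(hA)\P_m B(W_m(U,h))\bigr)$, the same two-piece splitting of $\partial_h^{n}(h\b(hA))$ from Lemma~\ref{l.ahA} that produces the two orders $\kO(hm^{k-\ell})$ and $\kO(m^{[k-\ell-1]_+})$, and the same case analysis on the Leibniz index $j$ relative to $\ell$ using \eqref{eqn:proj_wm_indep}, \eqref{e.derivBmbound} and the projection lifts \eqref{eqn:proj_est}. The exponent bookkeeping you describe, including the $j=k$ term and the borderline case $k=\lceil\ell\rceil$, matches the paper's estimates \eqref{e.k-jDerivhb}--\eqref{e.j>Ell} exactly.
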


\begin{proof}
 From \eqref{e.psi} we
formally  obtain 
  \begin{equation}\label{e.dkhPsi_m}
 \partial_h^k \hat\Psi_m = \sum_{j=0}^k
{k \choose j}    \partial^{k-j}_h (h \b^T(Ah)) \partial_h^j \P_m
  B(W_m(U,h)).
  \end{equation}
 From  \eqref{e.hbhA}  and \eqref{eqn:proj_est} we obtain for $n\in \N$, $n\geq \ell$,
 \begin{align}
\|\partial^{n}_h  ( h\b^T (h A)\P_m  \|_{\kY^s_\ell \to \kY} 
& \leq  
n M_\b^{(n-1)} \|\P_m \|_{\kY^s_{\ell}\to \kY^s_{n-1} }+   h M_\b^{(n)} \|\P_m\|_{\kY^s_{\ell} \to \kY^s_{n}}   \notag\\
& \leq   O(hm^{n-\ell})  +O(m^{[n-\ell-1]_+}).
 \label{e.k-jDerivhb}
 \end{align}
Using \eqref{e.k-jDerivhb}  (with $n=k-j$ and $\ell$ replaced by $\ell-j$) and
\eqref{eqn:proj_wm_indep},
we can estimate the $j$-th term in the
sum of \eqref{e.dkhPsi_m} for $ j\leq \ell$, $j< k$,  as follows:
\begin{align}
\| &\partial^{k-j}_h ( h \b^T(hA)) \partial_h^j \P_m
  B(W_m(U,h))\|_{\kY}\notag \\
& \leq
\|\partial^{k-j}_h ( h(\b^T(h A))\P_m \|_{ \kY^s_{\ell-j}\to\kY} \| \partial_h^j 
  B(W_m(U,h))\|_{\kY_{\ell-j}^s} \notag\\
&  \leq  O(hm^{ k-\ell}) +O(m^{[ k-\ell-1]_+}).
\label{e.jLessEll}
\end{align}
 Using \eqref{e.k-jDerivhb} and \eqref{e.derivBmbound}
 we can estimate the $j$-th term in the
sum of \eqref{e.dkhPsi_m} for $k> j\geq \ell$, $U \in \kB_\ell^{R/2}$, as follows:
\begin{align}
 \|\partial^{k-j}_h  & ( h\b^T(h A)) \partial_h^j \P_m
  B(W_m(U,h))\|_{\kY}\notag \\
& \leq
\|\partial^{k-j}_h ( h \b^T(hA))\P_m \|_{ \kY^s\to \kY} \| \partial_h^j 
  B(W_m(U,h))\|_{\kY^s}\notag\\
& =(\kO(h m^{k-j}) + \kO(m^{k-j-1} )) \kO( m^{j-\ell})
= O(hm^{k-\ell}) + O(m^{[k-\ell-1]_+}).
\label{e.j>Ell}
\end{align}
When $j=k$ then the second term in the estimates of \eqref{e.jLessEll} and \eqref{e.j>Ell}
disappears.
 These estimates together with \eqref{e.dkhPsi_m} then prove \eqref{e.deriv-hatpsim-bound}.
\end{proof}

  %%%%%%%%%%%%%%%%%%%%%%%%%%%%%%%%%%%%%%%%

%\subsection{Trajectory error for nonsmooth data }\label{ss.err_nonsmooth}
\subsection{ Proof of Theorem \ref{thm:expInt_convergence} }\label{ss.err_nonsmooth}
The proof has the same format as in \cite{Wulff2016}. To estimate the error of the  time semi-discretization
 we first discretize in space by a Galerkin truncation and prove regularity of the solution  of the truncated system. 
Then we estimate the error of the time discretization
of the space-discretized system  and couple the spatial discretization parameter $m$ with the time step size $h$. Finally we  prove regularity of the space-time discretization to estimate the  truncation error of the  time discretization. 
 
 We assume without loss of generality that $\ell\leq p$ noting \eqref{e.Y_ellCompare}, i.e., we replace $\ell$ by $ \min(\ell,p)$. 
\\

\noindent
{\em Step 1 (Regularity of solution of the Galerkin truncated system)}
This step is identical to \cite{Wulff2016}.  We include it for sake of completeness.
%In a first step we  aim to prove regularity  of the continuous solution of the projected system
%$u_m(t) = \phi_m^t(\P_m U^0) =\Phi_m^t(  U^0)$ which will be needed later.
We denote $R$ from \eqref{e.condPhiGlobal} as $R_\Phi$ to indicate that it is a bound on $\Phi^t(U^0)$. Then we have 
\begin{align}
\|\Phi_m^t(U^0)) \|_{\kY_\ell}  & \leq \|\P_m \Phi^t(U^0) -\Phi_m^t(U^0) \|_{\kY_\ell} + \|\P_m \Phi^t(U^0) \|_{\kY_\ell} \notag \\
& \leq m^\ell \|\P_m \Phi^t(U^0) -\Phi_m^t(U^0) \|_{\kY} 
+  \| \Phi^t(U^0) \|_{\kY_\ell} \notag \\
& \leq R_\Phi  \e^{ M'_0[2R_\Phi] T}  + R_\Phi  =:  r_\phi 
\label{e.hatC_ell}
\end{align}
for $U^0$ satisfying \eqref{e.condPhiGlobal},  $t\in [0,T]$ and $m\geq m_*$, where $m_*\geq 0$ is sufficiently
large. Here   we used 
\eqref{eqn:proj_est} in the second estimate and  Lemma \ref{l.see_proj_err} with  $\delta = R_\Phi$ and  \eqref{e.condPhiGlobal} 
in the final estimate.  

\smallskip
\noindent
{\em Step 2 (Trajectory error of the space time discretization)}
 Next we estimate the global error of the space time discretization, for $jh\leq T$,
\begin{equation}
  E_m^j(U^0,h)=\norm{\Phi^{jh}_m(U^0)-(\Psi^h_m)^j(U^0)}{\kY}.
  \label{eqn:ge_proj}
\end{equation}
  Using   \eqref{e.hatC_ell} for any $U^0$ satisfying \eqref{e.condPhiGlobal} and  all $(n+1)h\leq T$, $h\in [0,h_*]$, $m\geq m_*$
  we have
    \begin{align}
      E_m^{n+1}(&  U^0, h) = \norm{\Phi^{(n+1)h}_m( U^0) -
        (\Psi^h_m)^{n+1}(U^0) }{\kY}
      \notag \\
      &\leq \norm{ \Phi^h_m(\Phi_m^{nh}(U^0)) - \Psi^h_m(\Phi_m^{nh}(U^0))
      }{\kY}
    + \norm{ \Psi^h_m(\Phi_m^{nh}(U^0)) -
        \Psi^h_m((\Psi^h_m)^{n}(U^0)) }{\kY}
      \notag \\
      &\leq \frac{h^{p+1}}{(p+1)!}\left(\sup_{ t\in[0,T ]}
        \norm{\partial_h^{p+1}\hat\Phi^h_m(\Phi_m^t(U^0))}{\kY}+
     \sup_{ t\in[0,T] }   \norm{\partial_h^{p+1}\hat\Psi^h_m(\Phi_m^t(U^0))}{\kY} \right) \notag \\
      &\quad +
      \sup_{U\in \kB^{ 2r_\phi}_0}\norm{\D\Psi^h_m(U)}{ \kY\to \kY}\cdot
      E_m^n(U^0,h)\label{e.helpMain}
    \end{align}
 provided that
\begin{equation}\label{e.CondmeanvalueThm}
E_m^n(U,h)\leq r_\phi, \quad nh \leq T, h\in [0,h_*].
\end{equation}
The first term in \eqref{e.helpMain} is $h O(m^{p+1-\ell }) + O(m^{p-\ell}) $   by  Lemma \ref{l.phimDerivs}, with $R$ replaced by $r_\phi$ and  Lemma~\ref{lem:psim-m-dep-deriv}, with   $R$ replaced by $2r_\phi$, respectively. To bound  the second term note  that by Lemma~\ref{l.reg-proj-num-method}  (with  $R$ replaced by $4 r_\phi $ in \eqref{eqn:glob-U-num_method_regularity})
there is $h_*>0$ such that  $
W^i_h, \Psi^h_m \in \kCb^1(\kB^{2r_\phi}_0;\kB^{4 r_\phi}_0)
$, $i=1,\ldots, s$,  for $m \geq 0$,  $h \in [0,h_*]$
with uniform bounds in $m \geq 0$,  $h \in [0,h_*]$.
Then, using \eqref{e.psi}, \eqref{eqn:semigroup_bound}, (A2) and (EXP)
  we get  for $h \in [0,h_*]$
\begin{align}
 \| \D_U\Psi_m^h(U)\|_{\kY\to\kY}  & \leq   \|\exp(hA)\|_{ \kY\to \kY} + h \|\b(hA)\|_{\kY^s\to \kY} M'_0[4 r_\phi ]  \| \D_U W_m(U)\|_{ \kY\to\kY^s}
\notag \\
& \leq   1   + h M_\b M'_0[4 r_\phi ]  \| \D_U W_m(U)\|_{\kY\to\kY^s}=1 + \sigma_\Psi[2r_\phi] h
\label{e.DPsimEst}
\end{align}
 uniformly in  $U \in \kB^{2r_\phi}_0$, $m\geq 0$, $h\in[0,h_*]$ for some constant $\sigma_\Psi[2r_\phi]>0$. Plugging these estimates into \eqref{e.helpMain} gives 
 \begin{align}
 E_m^{n+1}(  U^0, h)      \leq \rho h^{p+1} (h m^{p+1-\ell} + m^{p-\ell } )  + (1+\sigma_\Psi h)E_m^n(U^0,h),
      \label{e.mainComp}%  \notag
    \end{align}   
  for some $\rho > 0$, where $\sigma_\Psi = \sigma_\Psi[2r_\phi]$. 
 Hence
  \begin{align*}
    E_m^n&(U,h) \leq \rho h^{p+1}  (h m^{ p+1-\ell } + m^{p-\ell } )\frac{(1+\sigma_\Psi  h)^n}{\sigma_\Psi  h} 
    %\\
  %  &\leq \frac{\rho}{\sigma_\Psi } h^p (h m^{ p+1-\ell } + m^{p-\ell } )\left(1+\frac{n\sigma_\Psi  h}{n}\right)^n 
    %\\& 
    \leq \frac{\rho}{\sigma_\Psi }  h^p (h m^{p+1-\ell } + m^{p-\ell } )e^{n\sigma_\Psi h}.
  \end{align*}
Choosing $m(h) = h^{-1}$ we see that  for $nh\leq T$, $h \in [0,h_*]$  and $\ell\leq p$
\begin{equation}\label{e.glob-ge_m}
  \|(\Psi^h_m)^n(U^0) - \Phi^{nh}_m(U^0) \|_{\kY} \leq
  \frac{\rho}{\sigma_\Psi }\e^{\sigma_\Psi T} h^{p} (2 h^{\ell-p})
  = C\e^{\sigma_\Psi  T}h^{ \ell  }.
\end{equation}
%If $\ell>p$ then $m^{[p-\ell]_+} = O(1)$, so for general $\ell \geq 0$ we obtain
%\begin{equation}\label{e.glob-ge_m}
%  \|(\Psi^h_m)^n(U^0) - \Phi^{nh}_m(U^0) \|_{\kY} \leq   C\e^{\sigma_\Psi  T}h^{ \min(\ell,p)  }.
%\end{equation}
%
Using  \eqref{e.glob-ge_m} we can  ensure  \eqref{e.CondmeanvalueThm}  by possibly reducing $h_*>0$.
 
%%%%%%%%%%%%%%%%%%%%%%%%%%%%%%%%%%%%%%%%%%%%%%%%%%%%%%%%%%%%%%%%%%%%%%%%%%%%

 \smallskip
\noindent
{\em Step 3 (Global truncation error of numerical trajectory)}
We will prove that for $m(h) = h^{-1}$, $nh\leq T$, $h \in [0,h_*]$,
\begin{equation}\label{e.glob-PsiPsihm}
  e^n(U^0)  :=\| (\Psi^h)^n(U^0)-(\Psi_{m(h)}^h)^n(U^0)\|_{\kY} = 
\kO( m^{-\ell } )
\end{equation}
uniformly for initial data $U^0$ satisfying \eqref{e.condPhiGlobal}.
The proof is as in \cite{Wulff2016} for A stable Runge Kutta time discretizations, with the necessary adaptations:
For $n\in \N$, $(n+1)h\leq T$,
\begin{align}
e^{n+1}(U^0) &\leq \| (\Psi^h \circ (\Psi^h)^n)(U^0) -(\Psi^h\circ
(\Psi^h_m)^n)( U^0) \|+ e^1((\Psi^h_m)^n)( U^0)) \notag\\
& \leq    \sup_{U \in \kB^{2r_\psi}_0  } \| \D\Psi^h(U)\|_{ \kY\to \kY}\|  e^n(U^0)\|_{\kY} 
+ \| e^1((\Psi^h_m)^n(U^0))\|_{\kY}
\label{e.en}
\end{align}
provided that 
\begin{align} \label{e.psimPsiTheta}
   \|(\Psi^h_m)^n(U^0)\|_{\kY} \leq r_\psi,   \quad \|e^n(U^0)\|_{\kY} \leq r_\psi, \quad 0\leq nh\leq T.
   \end{align} 
  To obtain the first estimate of \eqref{e.psimPsiTheta}
    we use that for $m=m(h) = h^{- 1}$, 
$nh\leq T$, $h \in [0,h_*]$,   we have
\begin{align}
  \|(\Psi_{m(h)}^h)^n(U^0)\|_{\kY_{\ell}}
&\leq  \|(\Psi^h_m)^n(U^0) - \Phi^{nh}_m(U^0) \|_{\kY_{\ell} }+ \| \Phi^{nh}_m(U^0) \|_{\kY_\ell } \notag\\
&\leq  m^\ell \|(\Psi^h_m)^n(U^0) - \Phi^{nh}_m(U^0) \|_{\kY }+r_\phi
% \notag\\
%& \leq h^{-\ell} C\e^{\sigma_\Psi T }h^{\ell} +r_\phi 
\leq C\e^{\sigma_\Psi T }+r_\phi
= r_\psi
\label{e.psim-Ell}
\end{align}
for some $r_\psi>0$. Here  $r_\phi$ is as in \eqref{e.hatC_ell} and 
we used \eqref{eqn:proj_est}  in the second  and  
\eqref{e.glob-ge_m} in the third inequality.
   
To obtain the second estimate of \eqref{e.psimPsiTheta}
note that  Theorem \ref{thm:num_method_regularity},  with  $R$  replaced by $4r_\psi $,  gives
$
W^i, \Psi \in \kCb^1(\kB_{0}^{2r_\psi}; \kB_{0}^{4r_\psi}).
$
Then \eqref{e.DPsimEst} applies, with $\Psi_m$ replaced by $\Psi$, $W_m$ by $W$ and $r_\phi$ by  $r_\psi$,  
and so \eqref{e.en} for $n\in \N$,  $h\in [0,h_*]$ and  $(n+1)h\leq T$ gives
 \begin{align}
  \|e^{n+1}(U^0) \|_{\kY}
  \leq   (1+\sigma_\Psi h)\|  e^n(U^0)\|_{\kY} + h \kO(m^{-\ell}),
 \label{e.enRecursion}
\end{align} 
where $m=m(h) = h^{-1}$ and $\sigma_\Psi = \sigma_\Psi[2r_\psi]$,
with order constant uniformly in all $U^0$ satisfying \eqref{e.condPhiGlobal},  as long as the second estimate of
\eqref{e.psimPsiTheta} holds.
Here we need that for $U \in \P_m\kY$,
\begin{equation}\label{e.e1}
e^1(U) = h\b^T( hA) \left( \left( \P_m( B(W(U,h))-B(W_m(U,h)) \right) + \Q_m B(W(U,h)) \right),
\end{equation}
so that for $U \in \kB_{\ell}^{r_\psi} \cap \P_m\kY$, $h\in [0,h_*]$, by
\eqref{e.wwm} (with  $R$ replaced by    $2r_\psi$)
\begin{align} \| e^1(U)\|_{\kY }
  & \leq  h M_\b   ( M_0'[2r_\psi]\|W(U,h)-W_m(U,h)\|_{\kY^s}  +\|\Q_m B(W(U,h))\|_{\kY^s})  \notag \\
  & \leq  h M_\b(  M_0'[2r_\psi]\kO(m^{-\ell}) + m^{-\ell} M_\ell[2r_\psi] ) =  h
  \kO(m^{-\ell}),
\label{e.e1Est}
\end{align}
where $m=m(h)$.  

From \eqref{e.enRecursion} we deduce for $nh\leq T$, $h\in [0, h_*]$ and all $U^0$ satisfying \eqref{e.condPhiGlobal} that
\begin{align}
  \|   e^n(U^0)\|_{\kY} 
  %&\leq (1+\sigma_\Psi h)^{n-1}\|  e^1(U^0)\|_{\kY} +    \frac{1}{\sigma_\Psi h}\left((1+\sigma_\Psi h)^{n-1}-1\right)h\kO(m^{-\ell})  \notag\\
  &\leq    \e^{\sigma_\Psi T}(\|  e^1(U^0)\|_{\kY} + \kO(m^{-\ell })) 
  = \kO(m^{-\ell}),\label{e.enEst}
\end{align}
with $m=m(h)$.
Here  we used that    \eqref{e.psipsim} with $R = 2 R_\Phi$  implies
 that $\| e^1(U^0)\|_\kY = O(m^{-\ell})$.
By choosing a possibly  smaller $h_*$  and thereby increasing  $m=h^{-1}$,
the second estimate of \eqref{e.psimPsiTheta} is satisfied. 
This proves \eqref{e.glob-PsiPsihm}.

Hence, \eqref{e.globPhiGalError}, \eqref{e.glob-ge_m} and \eqref{e.glob-PsiPsihm}
show that
%\begin{equation}\label{e.EnEst}
\[
  \| \Phi^{nh}(U^0) -( \Psi^h)^n(U^0)\|_\kY 
  \leq   \| \Phi^t(U^0)-\Phi^t_m(U^0) \|_\kY+  E_m^n(U^0,h) +e^n(U^0) = \kO( h^{\ell} )
  \]
%\end{equation}
for $nh\leq T$, $m(h) = h^{-1}$, $h\in [0, h_*]$ and $U^0$ satisfying \eqref{e.condPhiGlobal}.
\qed

%%%%%%%%%%%%%%%%%%%%%%%%%%%%%%%%%%%%%%%%%%%%%%%%%%%%%%%%%%%%%%%%%%%%%%%%%%%%%%%%%%%%
\section{Error estimates for  exponential Rosenbrock methods}
\label{s.exp_Rosenbrock}
In this section we extend our results to exponential Rosenbrock methods.
As in \cite{SchweitzerHochbruck} we define an   exponential Rosenbrock method
as
\begin{subequations}
\begin{align}
W&= \e^{\c h J(U^0)} \1 U^0 + h  \a(h J(U^0)) G(W,U^0), \label{e.WRosen}\\
U^1 & = \e^{h J(U^0) } U^0 + h   \b^T(h J(U^0)) G(W,U^0),\label{e.U1Rosen}
\end{align}
\end{subequations}
where
\[
J(U^0)= A+ \D B(U^0),\quad  G(U,U^0)  =B(U) - \D B(U^0) U.
\]
Here we define  $(G(W,U^0))^i = G(W^i,U^0)$, $i=1,\ldots, s$, analogously to the definition of $B(W)$, see \eqref{e.defB(W)}.
%\fbox{Do we want explicit only?} and $a_{ij} = 0$ for $i\leq j$.
We need stronger conditions for $\a(z)$ and $\b(z)$ because $J(U^0)$ might  not be a normal operator, so that $\a(hJ(U^0))$ and $\b(hJ(U^0))$ are in general not defined under assumption (EXP).
But we can define $\exp(t J(U^0))$ as flow map for the  evolution equation $\dot X = J(U^0) X$
 on $\kY_\ell$, $\ell \in I$, $\ell\leq N-1$, under assumptions (A2) and (B) by \cite{Pazy}.
We therefore modify condition (EXP) following \cite{SchweitzerHochbruck}:
 \begin{enumerate}
 \item[(EXP')]  For each coefficient $\a_{ij}:  \C \to\C$, $\b_i:\C \to \C$, $i,j=1,\ldots, s$,  there is a sequence $\{\lambda_k\}_{k\in \N}$  with $\lambda_k\geq 0$,  such that these coefficents are  linear combinations of the functions $\varphi_k(\lambda_k z)$, $k\in \N_0$, with $\varphi_k(z) := \int_0^1 \e^{(1-s) z} \frac{ s^{k-1}}{(k-1)!} \d s$, $k\in \N$, and $\phi_0(z) = \e^z$.
 \end{enumerate}
 Since $G$ contains a derivative of $B$  and we need the nonlinearity $G(U,U^0)$ to be $C^1$ on all $\kY_\ell$, $\ell \in I$, we also need to modify condition (B) as follows:
 \begin{enumerate}
 \item[(B')]  (B) holds with $N > \lceil L \rceil+1$.
  \end{enumerate}

 \begin{lemma}[Bound on $\e^{hJ}$ and $\varphi_k(hJ)$] \label{l.ehJ}
 Assume (A1), (A2) and (B') and let $R>0$, $\ell \in I$. Then  for all $U^0\in \kB_\ell^{R}$, $h\geq 0$
 \[
 \| \e^{hJ(U^0)} \|_{\kY_\ell \to \kY_\ell} \leq \e^{M'_\ell[R] h} \quad\mbox{and}\quad 
 \|\varphi_k(hJ(U^0)) \|_{\kY_\ell \to \kY_\ell} \leq \varphi_k(M'_\ell[R] h)~~\mbox{for all}~k\in \N.
 \]  
 \end{lemma}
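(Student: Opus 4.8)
\textbf{Proof proposal for Lemma \ref{l.ehJ}.}

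The plan is to obtain both bounds from a single differential-inequality argument applied to the flow of $\dot X = J(U^0)X$ on $\kY_\ell$, exploiting the splitting $J(U^0) = A + \D B(U^0)$ where $A$ generates a contraction semigroup on each $\kY_\ell$ by (A2) and \eqref{e.Y_ellCompare}, while $\D B(U^0)$ is a bounded operator on $\kY_\ell$ of norm at most $M'_\ell[R]$ by (B'). First I would fix $U^0 \in \kB_\ell^R$ and let $X(h) = \e^{hJ(U^0)}V$ for arbitrary $V \in \kY_\ell$; this is well-defined on $\kY_\ell$ for $\ell \in I$, $\ell \le N-1$, by the cited result of \cite{Pazy}, using that (B') guarantees $B \in \kCb^{N-\lceil \ell\rceil}$ with $N > \lceil L\rceil + 1$ so that $\D B(U^0)$ maps $\kY_\ell$ to itself. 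Differentiating $\tfrac12 \tfrac{\d}{\d h}\|X(h)\|_{\kY_\ell}^2 = \Re \ip{J(U^0)X(h), X(h)}{\kY_\ell}$ and noting $\Re\ip{AX,X}{\kY_\ell} \le 0$ (since $\|\e^{hA}\|_{\kY_\ell\to\kY_\ell}\le 1$ makes $A$ dissipative on $\kY_\ell$), I would bound the cross term by $\Re\ip{\D B(U^0)X,X}{\kY_\ell} \le M'_\ell[R]\|X\|_{\kY_\ell}^2$, yielding $\tfrac{\d}{\d h}\|X(h)\|_{\kY_\ell} \le M'_\ell[R]\|X(h)\|_{\kY_\ell}$. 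Gr\"onwall's inequality then gives $\|X(h)\|_{\kY_\ell} \le \e^{M'_\ell[R]h}\|V\|_{\kY_\ell}$, which is the first claim.

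For the bound on $\varphi_k(hJ(U^0))$ I would use the integral representation from (EXP'), namely $\varphi_k(z) = \int_0^1 \e^{(1-s)z}\tfrac{s^{k-1}}{(k-1)!}\,\d s$, applied with $z = hJ(U^0)$ interpreted through the flow. Applying the first bound to $\e^{(1-s)hJ(U^0)}$ and using monotonicity of the scalar integrand, I would estimate
\begin{align*}
\|\varphi_k(hJ(U^0))\|_{\kY_\ell\to\kY_\ell}
&\le \int_0^1 \|\e^{(1-s)hJ(U^0)}\|_{\kY_\ell\to\kY_\ell}\,\frac{s^{k-1}}{(k-1)!}\,\d s \\
&\le \int_0^1 \e^{(1-s)M'_\ell[R]h}\,\frac{s^{k-1}}{(k-1)!}\,\d s
= \varphi_k(M'_\ell[R]h),
\end{align*}
where the final equality is just the definition of $\varphi_k$ evaluated at the nonnegative real argument $M'_\ell[R]h$. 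Since $M'_\ell[R] \ge 0$, the scalar $\varphi_k$ is increasing, so no sign subtleties arise.

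The main obstacle is justifying the energy identity $\tfrac{\d}{\d h}\|X(h)\|_{\kY_\ell}^2 = 2\Re\ip{J(U^0)X(h),X(h)}{\kY_\ell}$ rigorously, since $A$ is unbounded and $X(h)$ need not lie in $D(A)$; the inner product $\ip{AX,X}{\kY_\ell}$ must be handled via the dissipativity of $A$ on $\kY_\ell$ rather than by direct differentiation. I would circumvent this by working first with the Galerkin truncation $J_m(U^0) = \P_m J(U^0)\P_m$, where everything is finite-dimensional and the energy estimate is elementary, establishing the bound uniformly in $m$, and then passing to the limit $m\to\infty$ using strong convergence of the truncated flows on $\kY_\ell$ (as already exploited in Lemma \ref{l.see_proj_err} and throughout the paper). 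This keeps the argument self-contained and avoids delicate domain considerations for the unbounded generator.
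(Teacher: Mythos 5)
Your proposal is correct, but it reaches the first estimate by a genuinely different route than the paper. The paper's proof is a two-line Duhamel argument: writing the mild-solution identity $\e^{tJ(U^0)} = \e^{tA} + \int_0^t \e^{(t-s)A}\,\D B(U^0)\,\e^{sJ(U^0)}\,\d s$ and setting $x(t) = \|\e^{tJ(U^0)}\|_{\kY_\ell\to\kY_\ell}$, the bounds $\|\e^{(t-s)A}\|_{\kY_\ell\to\kY_\ell}\le 1$ and $\|\D B(U^0)\|_{\kY_\ell\to\kY_\ell}\le M'_\ell[R]$ give $x(t)\le 1+\int_0^t M'_\ell[R]\,x(s)\,\d s$, and Gr\"onwall finishes. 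This sidesteps the domain issue you rightly flag: the Duhamel formula is an integral identity for the mild solution, so no differentiation of $\|X(h)\|_{\kY_\ell}^2$ and no regularization are needed. Your energy/dissipativity argument uses the stronger structural fact that $A$ is dissipative with respect to the $\kY_\ell$ inner product (true here because $A$ is normal and commutes with $|A|^\ell$, but more than the semigroup bound alone), and your Galerkin detour, while salvageable, carries a mild circularity risk: the limit $\e^{hJ_m(U^0)}\P_m V\to \e^{hJ(U^0)}V$ is most naturally proved by exactly the Duhamel--Gr\"onwall comparison that would already yield the lemma directly (indeed Lemma \ref{l.proj_ahJ} in the paper establishes that convergence \emph{using} Lemma \ref{l.ehJ}), so you would need either a lower-semicontinuity argument along weak limits or an independent Trotter--Kato-type convergence statement to avoid assuming what you prove. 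Your treatment of the $\varphi_k$ bound coincides with the paper's intent (the paper merely says it ``follows from the definition of $\varphi_k$'') and correctly exploits the non-negativity of the integrand, as the paper does explicitly in the proof of Lemma \ref{l.J^j}.
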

 \begin{proof}
 We have
    \begin{equation}\label{e.etJ}
  \e^{t J(U^0)}= \e^{t A}  +  \int_0^t \e^{(t-s) A}  \D_U B(U^0)  \exp(s J(U^0))\d s.
  \end{equation}
  Let $x(t) = \| \exp(t J(U^0))\|_{\kY_\ell\to \kY_\ell}$. Then
  \[
  x(t) \leq  1 +  \int_0^t M'_\ell[R] x(s) \d s.
  \]
   Hence $x(t) \leq \e^{t M'_\ell[R]} $. The second estimate follows  from the definition 
   of $\varphi_k$.
  \end{proof}

\begin{lemma}[Bound on $J^j$] \label{l.J^j}
 Assume (A1), (A2) and (B') and let $R>0$, $\ell-1 \in I^-$. 
 Then  for all $j \in \N$, $j\leq \ell$ and all  $U^0\in \kB_\ell^{R}$, $h\geq 0$,
\begin{subequations}
 \begin{align}
 \| J^j(U^0) \|_{\kY_\ell \to \kY_{\ell-j}} &\leq  \prod_{n=1}^{j}  (1+ M'_{\ell-n}[R]),\label{e.J^j}\\
 \|\partial_h^j \varphi_k(hJ(U^0))\|_{\kY_\ell \to \kY_{\ell-j}}   &\leq  \varphi_k^{(j)}(M'_{j-\ell}[R] h) \prod_{n=1}^j  (1+ M'_{\ell-n}[R]).\label{e.partial_h^jphi}
 \end{align}
 \end{subequations}
 \end{lemma}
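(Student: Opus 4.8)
The plan is to prove both estimates by factoring the operators through the scale $\{\kY_s\}_s$, exploiting that $A$ drops exactly one degree of regularity per application while $\D B(U^0)$ preserves regularity. For \eqref{e.J^j} I would first record the single-step bound: for $n=1,\dots,j$,
\[
\|J(U^0)\|_{\kY_{\ell-n+1}\to\kY_{\ell-n}} \le 1 + M'_{\ell-n}[R].
\]
The part $\|A\|_{\kY_{\ell-n+1}\to\kY_{\ell-n}}\le 1$ follows from \eqref{eqn:inner_prod} and \eqref{e.Y_ellCompare} (recall that $A$ commutes with $\P_1$ and $\Q_1$), while $\|\D B(U^0)\|_{\kY_{\ell-n+1}\to\kY_{\ell-n}}\le M'_{\ell-n}[R]$ follows by first embedding $\kY_{\ell-n+1}\hookrightarrow\kY_{\ell-n}$ (norm $\le 1$ by \eqref{e.Y_ellCompare}) and then invoking (B) on $\kY_{\ell-n}$, noting $U^0\in\kB_\ell^R\subseteq\kB_{\ell-n}^R$. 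Composing these estimates for $n=1,\dots,j$ telescopes the chain $\kY_\ell\to\kY_{\ell-1}\to\cdots\to\kY_{\ell-j}$ and yields \eqref{e.J^j}.

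For \eqref{e.partial_h^jphi} I would start from the integral representation of $\varphi_k$ in (EXP'),
\[
\varphi_k(hJ(U^0)) = \int_0^1 \e^{(1-s)hJ(U^0)}\,\frac{s^{k-1}}{(k-1)!}\,\d s ,
\]
and differentiate $j$ times under the integral sign. Since $\e^{tJ(U^0)}$ is the flow of $\dot X=J(U^0)X$, every $\partial_h$ produces a factor $(1-s)J(U^0)$, so
\[
\partial_h^j \varphi_k(hJ(U^0)) = \int_0^1 (1-s)^j\,J^j(U^0)\,\e^{(1-s)hJ(U^0)}\,\frac{s^{k-1}}{(k-1)!}\,\d s .
\]
Because $J^j(U^0)$ commutes with $\e^{(1-s)hJ(U^0)}$, I factor the integrand as $\kY_\ell\xrightarrow{J^j(U^0)}\kY_{\ell-j}\xrightarrow{\e^{(1-s)hJ(U^0)}}\kY_{\ell-j}$ and bound its $\kY_\ell\to\kY_{\ell-j}$ norm, using \eqref{e.J^j} and Lemma \ref{l.ehJ} applied on $\kY_{\ell-j}$ (with $U^0\in\kB_{\ell-j}^R$), by
\[
\prod_{n=1}^j\bigl(1+M'_{\ell-n}[R]\bigr)\,\e^{M'_{\ell-j}[R](1-s)h}.
\]
Pulling the product out of the integral leaves $\int_0^1 (1-s)^j \e^{M'_{\ell-j}[R](1-s)h}\frac{s^{k-1}}{(k-1)!}\,\d s = \varphi_k^{(j)}(M'_{\ell-j}[R]h)$, which gives the stated bound (with argument $M'_{\ell-j}[R]h$).

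The one point needing care is the index bookkeeping: every intermediate space must satisfy $\ell-n\in I$ for $n=1,\dots,j$ so that (B) and Lemma \ref{l.ehJ} apply. This is exactly where the hypothesis $\ell-1\in I^-$ enters: by the definition \eqref{e.I}, $\ell-1\in I$ together with $(\ell-1)-k\in I$ for $k=1,\dots,\lfloor\ell-1\rfloor$ gives $\ell-1,\ell-2,\dots,\ell-(1+\lfloor\ell-1\rfloor)\in I$; and since $j\in\N$ with $j\le\ell$ forces $j\le 1+\lfloor\ell-1\rfloor$, the lowest required index $\ell-j$ indeed lies in $I$. The remaining technical step, differentiation under the integral sign, is routine given the uniform-in-$s$ bounds of Lemma \ref{l.ehJ} and the continuity of the integrand, so I expect no genuine difficulty there; the telescoping composition and the $\varphi_k^{(j)}$ identity are the substance of the argument.
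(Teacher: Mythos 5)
Your proof is correct and follows essentially the same route as the paper's: the telescoping factorization $\kY_\ell\to\kY_{\ell-1}\to\cdots\to\kY_{\ell-j}$ with $\|A+\D B(U^0)\|_{\kY_{\ell+1-n}\to\kY_{\ell-n}}\leq 1+M'_{\ell-n}[R]$ for \eqref{e.J^j}, and the identity $\partial_h^j\varphi_k(hJ)=\varphi_k^{(j)}(hJ)J^j$ together with the non-negativity of the integrand of $\varphi_k^{(j)}$ (equivalently, Lemma \ref{l.ehJ} applied under the integral) for \eqref{e.partial_h^jphi}. You are also right that the resulting bound has argument $M'_{\ell-j}[R]h$, so the subscript $M'_{j-\ell}$ in the stated \eqref{e.partial_h^jphi} is a typo, and your index bookkeeping via $\ell-1\in I^-$ is exactly the point the hypothesis is there for.
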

 \begin{proof}
  We have  
 \begin{align*}
  \| J^j(U^0) \|_{\kY_\ell\to \kY_{\ell-j}} & \leq \prod_{n=1}^j  \|A + \D B(U^0) \|_{\kY_{\ell+1-n} \to \kY_{\ell-n}}
 % \leq \prod_{n=1}^j  ( 1 + \| \D B(U^0) \|_{\kY_{n-1} \to \kY_{n-1}} )\\&
  \leq \prod_{n=1}^j   (1 +M'_{\ell-n}[R]).
 \end{align*}
  The integrand of $\varphi_k^{(j)}(z) = \int_0^1 (1-s)^j \e^{(1-s) z} \frac{ s^{k-1}}{(k-1)!} \d s$
  is   non-negative so that 
  \[
  \|\varphi_k^{(j)}(hJ(U^0)) \|_{\kY_{\ell-j}\to \kY_{\ell-j}} \leq \varphi_k^{(j)}(hM'_{\ell-j}[R]).
  \]
  Hence 
  \begin{align*}
  \|\partial_h^j \varphi_k(hJ(U^0))\|_{\kY_\ell \to \kY_{\ell-j}} &  = \|\varphi_k^{(j)}(t J(U^0))J^j(U^0)\|_{\kY_{\ell} \to \kY_{\ell-j}}\\
  & \leq \varphi_k^{(j)}(hM'_{\ell-j}[R])\| J^j(U^0)\|_{\kY_\ell \to \kY_{\ell-j}}.
  \end{align*}
 \end{proof}
 
 \begin{lemma}[Bounds on $h$ derivatives of $\a(hJ)$ and $\b(hJ)$]\label{l.ahA-rosenbrock}
 Assume (A1), (A2),  (B') and  (EXP') and let $R>0$, $\ell\in I^-$. Then Lemma \ref{l.ahA} on the $h$ derivatives of $\a(hJ(U^0))$ and $\b(hJ(U^0))$ holds true    
 for $U^0 \in \kB^R_\ell$ and all $k\in \N_0$ such that $\ell\geq k$ with bounds $M^{(k)}_{\a,,\ell}[R]$, $M^{(k)}_{\b,\ell}[R]$,  $M^{(k)}_{h,\a,\ell}[R]$, $M^{(k)}_{h,\b,\ell}[R]$ that also depend on $\ell$ and $R$.
 \end{lemma}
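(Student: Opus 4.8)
The plan is to reduce everything to the scalar building blocks $\varphi_k$ furnished by (EXP') and then to mimic the proof of Lemma~\ref{l.ahA}, replacing the functional calculus for the normal operator $A$ by the Duhamel-type bounds of Lemmas~\ref{l.ehJ} and~\ref{l.J^j} for the non-normal operator $J(U^0)$. By (EXP') each entry $\a_{ij}(z)$ and $\b_i(z)$ is a finite linear combination of functions $z \mapsto \varphi_k(\lambda_k z)$ with $\lambda_k \geq 0$, so by linearity it suffices to establish the required $h$-derivative estimates for a single block $h \mapsto \varphi_k(\lambda_k h J(U^0))$, both with and without the extra factor $h$; the general bounds then follow by summing the finitely many contributions and absorbing the coefficients of the linear combinations into the new constants $M^{(k)}_{\a,\ell}[R]$, $M^{(k)}_{\b,\ell}[R]$, $M^{(k)}_{h,\a,\ell}[R]$, $M^{(k)}_{h,\b,\ell}[R]$.

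Next I would differentiate each block. Exactly as in the derivation of $\partial_h^k \a(hA) = A^k \a^{(k)}(hA)$, but now using the integral representation of $\varphi_k$ and the fact that $J(U^0)$ commutes with $\e^{sJ(U^0)}$, one obtains
\begin{equation*}
\partial_h^j \varphi_k(\lambda_k h J(U^0)) = \lambda_k^j\, J^j(U^0)\, \varphi_k^{(j)}(\lambda_k h J(U^0)).
\end{equation*}
This is precisely the quantity estimated in Lemma~\ref{l.J^j}: combining the bound on $J^j(U^0)\colon \kY_\ell \to \kY_{\ell-j}$ with the bound on $\varphi_k^{(j)}(\lambda_k h J(U^0))\colon \kY_{\ell-j}\to\kY_{\ell-j}$ coming from the non-negativity of the integrand of $\varphi_k^{(j)}$ shows that $\partial_h^j \varphi_k(\lambda_k h J(U^0))$ is bounded $\kY_\ell \to \kY_{\ell-j}$ for $j \leq \ell$, with a bound depending on $\ell$ and $R$ through the factors $M'_{\ell-n}[R]$. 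This plays the role of the uniform estimate $\|A^\ell\|_{\kY_\ell\to\kY}\leq 1$ used in Lemma~\ref{l.ahA}; the difference is only that the constant is no longer universal but $\ell$- and $R$-dependent, which is exactly why the conclusion is stated with $\ell$- and $R$-dependent constants.

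Finally I would reproduce the product-rule step of Lemma~\ref{l.ahA} verbatim at the level of the blocks. Writing
\begin{equation*}
\partial_h^k\bigl( h\, \varphi_k(\lambda_k h J(U^0)) \bigr) = k\, \partial_h^{k-1}\varphi_k(\lambda_k h J(U^0)) + h\, \partial_h^{k}\varphi_k(\lambda_k h J(U^0)),
\end{equation*}
and bounding the two terms by the estimate of the previous paragraph (with $j = k-1$ and $j = k$ respectively) yields the analogues of \eqref{e.hahA} and \eqref{e.hbhA}; strong continuity and $\kCb^k$-regularity in $h$ follow by differentiating the integral representation of $\varphi_k$ under the integral sign, which replaces the functional-calculus and $\P_m$-smoothing argument used in the normal case. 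Summing over the finitely many blocks and over the entries of $\a$ and $\b$ and collecting constants then gives the stated bounds.

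The main obstacle is the loss of functional calculus: since $J(U^0)$ need not be normal, neither $\a(hJ(U^0))$ nor its $h$-derivatives can be controlled by a single uniform norm, and one cannot invoke $\|A^\ell\|_{\kY_\ell\to\kY}\leq 1$ as before. The structural hypothesis (EXP'), together with the requirement $\ell \in I^-$ (so that the intermediate spaces $\kY_{\ell-n}$ lie in the scale and $\D B(U^0)$ maps boundedly between consecutive ones by (B')), is precisely what allows one to substitute the Duhamel-type bounds of Lemmas~\ref{l.ehJ} and~\ref{l.J^j} for the functional calculus, at the cost of $\ell$- and $R$-dependent constants.
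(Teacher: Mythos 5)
Your proposal is correct and follows essentially the same route as the paper: the paper's (one-line) proof likewise reduces via (EXP') to the blocks $\varphi_k(\lambda_k hJ(U^0))$, invokes Lemma~\ref{l.ehJ} for $k=0$, and uses the estimate \eqref{e.partial_h^jphi} of Lemma~\ref{l.J^j} together with \eqref{e.Y_ellCompare} for $k>0$. Your write-up merely makes explicit the product-rule step for $h\a(hJ)$, $h\b(hJ)$ and the summation over the finitely many $\varphi_k$-blocks, which the paper leaves implicit in the phrase ``Lemma~\ref{l.ahA} holds true.''
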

 \begin{proof}
 Using (EXP')  this follows from Lemma \ref{l.ehJ} in the case $k=0$ and from \eqref{e.partial_h^jphi} and \eqref{e.Y_ellCompare} in the case $k>0$.
 \end{proof}
 
 We also need bounds on $\D^j_U \e^{h J(U)}$, $\a(h J(U)$ and $\b(h J(U)$:
 \begin{lemma}[Bounds on $U$ derivatives of $\e^{h J(U)}$, $\a(h J(U)$ and $\b(h J(U)$] \label{l.D_UJ}
 Assume (A1), (A2) and (B),  let $R>0$ and $\ell \in I$. Then  for all $j \in \N$, $j+\ell\leq N-1$, and all $U\in \kB_\ell^R$, 
 \begin{equation}\label{e.D_etJ}
  \|  \D^j_U  \e^{t J(U)}\|_{\kY_\ell\to\kY_\ell} \leq P_j(t)  e^{M'_\ell[R] t}
  \end{equation}
  where $P_j(t)$ is a polynomial of degree $j$ in $t$ with $P_j(0)=0$ and coefficients
  which are polynomials in $M^{(n)}_\ell [R]$, $2\leq n\leq j+1$. 
  
  These kinds of bounds also hold for 
  $ \|  \D^j_U  \a(t J(U))\|_{\kY_\ell^s\to\kY_\ell^s}$ and $ \|  \D^j_U  \b({t J(U)})\|_{\kY^s_\ell\to\kY_\ell}$.
 \end{lemma}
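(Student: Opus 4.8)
The plan is to treat $\e^{tJ(U)}$ through its defining linear evolution equation $\partial_t\,\e^{tJ(U)}=J(U)\,\e^{tJ(U)}$, $\e^{0\cdot J(U)}=\id$, and to prove \eqref{e.D_etJ} by induction on $j$ via a variation-of-constants representation for the $U$-derivatives. Since $J(U)=A+\D B(U)$ and $A$ is independent of $U$, we have $\D^i_U J(U)=\D^{i+1}B(U)$ for $i\ge 1$, which by (B) is a bounded $(i+1)$-linear map on $\kY_\ell$ with norm $\le M^{(i+1)}_\ell[R]$ as long as $i+1\le N-\lceil\ell\rceil$. The hypothesis $j+\ell\le N-1$ gives $j+1\le N-\lceil\ell\rceil$, so all derivatives of $B$ up to order $j+1$ that occur below are available and bounded on $\kB_\ell^R$.

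First I would differentiate the evolution equation $j$ times in $U$. Applying the Leibniz rule to the operator product $J(U)\,\e^{tJ(U)}$ and separating the $i=0$ term yields
\begin{equation*}
\partial_t\,\D^j_U\e^{tJ(U)} = J(U)\,\D^j_U\e^{tJ(U)} + \sum_{i=1}^{j}\binom{j}{i}\,\D^{i+1}B(U)\cdot\D^{j-i}_U\e^{tJ(U)},
\end{equation*}
with $\D^j_U\e^{tJ(U)}\big|_{t=0}=0$ for $j\ge 1$, the inhomogeneity involving only derivatives of $\e^{tJ(U)}$ of order $\le j-1$. Duhamel's formula with propagator $\e^{(t-s)J(U)}$ then gives
\begin{equation*}
\D^j_U\e^{tJ(U)} = \int_0^t \e^{(t-s)J(U)}\Bigl[\,\sum_{i=1}^{j}\binom{j}{i}\,\D^{i+1}B(U)\cdot\D^{j-i}_U\e^{sJ(U)}\Bigr]\,\d s,
\end{equation*}
which simultaneously establishes the existence of $\D^j_U\e^{tJ(U)}$ as a bounded operator on $\kY_\ell$ and furnishes its bound, once the lower-order derivatives are under control.

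Next I would close the induction. Setting $P_0\equiv 1$ and using the Gronwall bound of Lemma~\ref{l.ehJ}, namely $\|\e^{(t-s)J(U)}\|_{\kY_\ell\to\kY_\ell}\le\e^{(t-s)M'_\ell[R]}$ and $\|\e^{sJ(U)}\|\le\e^{sM'_\ell[R]}$, together with the inductive bound $\|\D^{j-i}_U\e^{sJ(U)}\|_{\kY_\ell\to\kY_\ell}\le P_{j-i}(s)\,\e^{sM'_\ell[R]}$, the exponential factors combine to $\e^{tM'_\ell[R]}$ and I am left with
\begin{equation*}
\|\D^j_U\e^{tJ(U)}\|_{\kY_\ell\to\kY_\ell}\le \e^{tM'_\ell[R]}\int_0^t\sum_{i=1}^{j}\binom{j}{i}M^{(i+1)}_\ell[R]\,P_{j-i}(s)\,\d s =: P_j(t)\,\e^{tM'_\ell[R]}.
\end{equation*}
Integrating a polynomial of degree $\le j-1$ (attained for $i=1$) produces $P_j$ of degree $\le j$ with $P_j(0)=0$, whose coefficients are polynomials in $M^{(n)}_\ell[R]$ for $2\le n\le j+1$ (the top index entering through the $i=j$ term), exactly as claimed. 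For $\a$ and $\b$ I would invoke (EXP'), which expresses each coefficient as a linear combination of functions $\varphi_k(\lambda_k hJ(U))$; using $\varphi_k(z)=\int_0^1\e^{(1-\sigma)z}\tfrac{\sigma^{k-1}}{(k-1)!}\,\d\sigma$ (and $\varphi_0=\e^{z}$), interchanging $\D^j_U$ with the $\sigma$-integral, and applying the bound just obtained for $\D^j_U\e^{(1-\sigma)\lambda_k hJ(U)}$ against the nonnegative kernel $\sigma^{k-1}/(k-1)!$ yields the same type of estimate, hence for $\D^j_U\a(hJ(U))$ and $\D^j_U\b(hJ(U))$ by linearity.

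The main obstacle is the first, bookkeeping-heavy step: justifying the interchange of $U$-differentiation with the time evolution and organizing the Leibniz expansion of the operator product $J(U)\,\e^{tJ(U)}$, with its multilinear structure, so that the Duhamel representation is rigorous rather than merely formal. This is handled by letting the induction do double duty --- each step uses the already-established boundedness and differentiability of the lower-order derivatives to legitimize differentiating under the integral sign at the next order, so existence and the polynomial bound are obtained together.
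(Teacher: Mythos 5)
Your proposal is correct and follows essentially the same route as the paper: differentiate the defining evolution equation $j$ times in $U$, apply the Leibniz rule to $J(U)\,\e^{tJ(U)}$, represent $\D^j_U\e^{tJ(U)}$ by Duhamel's formula, and close an induction on $j$ using the bound of Lemma~\ref{l.ehJ}, with the statements for $\a$ and $\b$ reduced to the exponential case via (EXP'). The only cosmetic difference is that you carry the binomial coefficients explicitly while the paper absorbs them into the polynomial coefficients $a_{k,j}$, which it tracks by an explicit recursion.
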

% \fbox{Only really needed for $n=1$}
 \begin{proof}
 The last statement follows from \eqref{e.D_etJ} due to  (EXP').  To prove   \eqref{e.D_etJ}, note that for $j\in \N$, $j +\ell \leq N-1$,
  \begin{align*}
  \partial_t \D^j_U  \e^{t J(U)}& 
  %= \D^j_U  \partial_t  \e^{t J(U)}
  = \D^j_U\left(J(U)  \e^{t J(U)}\right)  
%  = \sum_{i=0}^{j}  \D^{j-i}_U J(U) \D^{i}_U   \e^{t J(U)}\\ &
  =  \sum_{i=0}^{j-1}  \D^{j+1-i}_U B(U) \D^{i}_U  \e^{t J(U)} + J(U) \D^j_U   \e^{t J(U)}.
  \end{align*}
 Assume that $\D^i_U   \e^{t J(U)}$ exist for $i\leq j-1$. Then  
  \[
  \D^j_U  \e^{t J(U)} =  \int_0^t \e^{(t-s) J(U)} \sum_{i=0}^{j-1}  \D^{j+1-i}_U B(U) \D^{i}_U   \e^{s J(U)}\d s
  \]
  is well defined on $\kY_\ell$. Moreover,  for $U \in \kB_\ell^R$,
  \[
  \|  \D^j_U  \e^{t J(U)}\|_{\kY_\ell\to\kY_\ell}  \leq   \int_0^t  e^{M'_\ell[R](t-s)}\sum_{i=0}^{j-1}  M^{(j+1-i)}_\ell[R] \|\D^{i}_U  \e^{s J(U)}\|_{\kY_\ell\to\kY_\ell}  \d s
  \]
  For $j=1$ Lemma \ref{l.ehJ} yields
$
  \|  \D_U  \e^{t J(U)}\|_{\kY_\ell\to\kY_\ell}  \leq  
   M^{(2)}_\ell[R] t  e^{M'_\ell[R] t}
 $
 and inductively this gives \eqref{e.D_etJ}:
 Assume that for some $a_{k,n}$ and all $1\leq n\leq j-1$
   \[
  \|  \D^{n}_U  \e^{t J(U)}\|_{\kY_\ell\to\kY_\ell}  \leq e^{M'_\ell[R]t}\sum_{k=1}^n \frac{ a_{k,n} t^k}{k!}.
  \]
  Then 
   \begin{align*}
  \|  \D^j_U  \e^{t J(U)}\|_{\kY_\ell\to\kY_\ell}   &   \leq  e^{M'_\ell[R]t}\int_0^t M^{(j+1)}_\ell[R] \sum_{i=1}^{j-1}  M^{(j+1-i)}_\ell[R]\sum_{k=1}^i\frac{a_{k,i} s^k}{k!} \d s 
 \\
 & =  e^{M't}  (t M^{(j+1)}_\ell[R]   \sum_{i=1}^{j-1}  M^{(j+1-i)}\sum_{k=1}^i\frac{a_{k,i} t^{k+1}}{(k+1)!} )\\
 & =  e^{M'_\ell[R]t}  (t M^{(j+1)}_\ell[R] \sum_{k=1}^j (\sum_{i=k}^{j-1}  M^{(j+1-i)}_\ell[R]\frac{a_{k,i} t^{k+1}}{(k+1)!} ) \\
 & =  e^{M'_\ell[R]t}\sum_{k=1}^{j} \frac{ a_{k,j} t^k}{k!}
  \end{align*}
  with $a_{1,j}= M^{(j+1)}_\ell[R]$ and 
 $
  a_{k,j} = \sum_{i=k-1}^{j-1}  M^{(j+1-i)}_\ell[R] a_{k-1,i} 
 $
  for $k\geq 2$.
   \end{proof}
   
Due to this  lemma Theorem \ref{thm:num_method_regularity} on the regularity of the numerical method holds
true, if $N$ is replaced by $N-1$ in \eqref{eqn:glob-U-num_method_regularity} and (EXP'), (B') is assumed.
 To show that Lemma 
\ref{l.reg-proj-num-method} on the projection error of the numerical method also remains
true under these assumptions we need the following:

\begin{lemma}[Projection error of $\e^{h J(U^0)}$, $\a(h J(U^0))$ and $\b(h J(U^0)) $]\label{l.proj_ahJ} Assume (A1), (A2), (B'), (EXP') and let $R>0$, $h_*>0$, $\ell \in I$. Let  $J_m(U) = \P_m J(\P_m U) \P_m$. Then 
\begin{align*}
\| \e^{h J(U^0)} - \e^{h J_m(U^0)}\|_{\kY_\ell\to \kY} &= \kO(m^{-\ell})\\
\| \a(h J(U^0)) - \a(h J_m(U^0))\|_{\kY_\ell^s\to \kY^s} &= \kO(m^{-\ell}),
\\
\| \b(h J(U^0)) - \b(h J_m(U^0))\|_{\kY_\ell^s\to \kY} &= \kO(m^{-\ell})
\end{align*}
with order constants uniform in  for all  $U^0 \in \kB_{\ell}^{R}$ and $h\in [0,h_*]$.
\end{lemma}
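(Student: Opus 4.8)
The plan is to reduce all three estimates to the first one, on $\e^{hJ(U^0)}-\e^{hJ_m(U^0)}$. By (EXP') every entry of $\a$ and $\b$ is a finite linear combination of functions $\varphi_k(\lambda_k z)$ with $\varphi_0(z)=\e^z$ and, for $k\geq 1$, $\varphi_k(z)=\int_0^1\e^{(1-\tau)z}\frac{\tau^{k-1}}{(k-1)!}\,\d\tau$. Writing
\begin{equation*}
\varphi_k(\lambda_k hJ(U^0))-\varphi_k(\lambda_k hJ_m(U^0))=\int_0^1\bigl(\e^{(1-\tau)\lambda_k hJ(U^0)}-\e^{(1-\tau)\lambda_k hJ_m(U^0)}\bigr)\frac{\tau^{k-1}}{(k-1)!}\,\d\tau,
\end{equation*}
and bounding the integrand uniformly in $\tau$ by the first estimate (applied with $h$ replaced by $(1-\tau)\lambda_k h\in[0,\lambda_k h_*]$), the $\kO(m^{-\ell})$ bounds for $\a(hJ)-\a(hJ_m)$ and $\b(hJ)-\b(hJ_m)$ follow by taking finite linear combinations and noting that $\a,\b$ act entrywise on $\kY^s$, so the block operator inherits the $\kO(m^{-\ell})$ estimate of each scalar entry $\kY_\ell\to\kY$.

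To prove the first estimate I would write $J(U^0)=A+D$ with $D=\D B(U^0)$, and $J_m(U^0)=A_m+D_m$ with $A_m=\P_m A\P_m=A\P_m$ and $D_m=\P_m\D B(\P_m U^0)\P_m$. The main obstacle is that $J-J_m$ contains the term $A\Q_m$, which is unbounded from $\kY_\ell$ to $\kY$, so the naive Duhamel identity $\e^{hJ}-\e^{hJ_m}=\int_0^h\e^{(h-s)J_m}(J-J_m)\e^{sJ}\,\d s$ is useless. Instead I would expand both exponentials by Duhamel relative to the \emph{linear} parts $A$ and $A_m$,
\begin{equation*}
\e^{hJ}=\e^{hA}+\int_0^h\e^{(h-s)A}D\,\e^{sJ}\,\d s,\qquad \e^{hJ_m}=\e^{hA_m}+\int_0^h\e^{(h-s)A_m}D_m\,\e^{sJ_m}\,\d s,
\end{equation*}
so that $A$ only ever appears inside the bounded semigroups $\e^{tA},\e^{tA_m}$. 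Since $\P_m$ is the spectral projection of $A$, one has $\e^{tA_m}=\e^{tA}\P_m+\Q_m$, whence $\e^{tA}-\e^{tA_m}=(\e^{tA}-\id)\Q_m$ has norm $\le 2$ on $\kY$ and, by \eqref{eqn:proj_est}, is $\kO(m^{-\ell})$ as a map $\kY_\ell\to\kY$; likewise $\|\e^{tA_m}\|_{\kY\to\kY}\le 1$.

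Subtracting the two Duhamel identities and setting $E(h)=\e^{hJ(U^0)}-\e^{hJ_m(U^0)}$, I would split the integrand into $(\e^{(h-s)A}-\e^{(h-s)A_m})D\e^{sJ}$, $\e^{(h-s)A_m}(D-D_m)\e^{sJ}$ and $\e^{(h-s)A_m}D_m E(s)$. The first is $\kO(m^{-\ell})$ by the semigroup-difference bound together with Lemma \ref{l.ehJ}, which keeps $\e^{sJ}U$ in $\kB_\ell^R$ and $D\e^{sJ}U$ bounded in $\kY_\ell$ by $M'_\ell[R]\e^{M'_\ell[R]s}$. For the second I would establish the key estimate $\|(D-D_m)V\|_\kY=\kO(m^{-\ell})\|V\|_{\kY_\ell}$ from the decomposition $D-D_m=\Q_m\D B(U^0)+\P_m\D B(U^0)\Q_m+\P_m(\D B(U^0)-\D B(\P_m U^0))\P_m$ (read as operator compositions): the first and second summands are handled by $\|\Q_m\|_{\kY_\ell\to\kY}\le m^{-\ell}$, and the third by the Lipschitz bound $\|\D B(U^0)-\D B(\P_m U^0)\|_{\kY\to\kY}\le M^{(2)}_0[R]\|\Q_m U^0\|_\kY\le M^{(2)}_0[R]m^{-\ell}R$, which is exactly where assumption (B') (guaranteeing $B\in\kCb^2(\kB_0^R;\kY)$) enters. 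The third term is controlled by $\|D_m E(s)\|_\kY\le M'_0[R]\|E(s)\|$.

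Collecting these, I would obtain $\|E(h)\|_{\kY_\ell\to\kY}\le C\,m^{-\ell}+M'_0[R]\int_0^h\|E(s)\|_{\kY_\ell\to\kY}\,\d s$ with $C$ uniform in $U^0\in\kB_\ell^R$ and $h\in[0,h_*]$, and Gronwall's inequality then yields $\|E(h)\|_{\kY_\ell\to\kY}=\kO(m^{-\ell})$, completing the proof. The one point requiring care is the uniformity of all constants: the factors $M'_\ell[R]$, $M'_0[R]$, $M^{(2)}_0[R]$ are uniform over $\kB_\ell^R$ (note $\kB_\ell^R\subseteq\kB_0^R$ by \eqref{e.Y_ellCompare}) and the exponential factors are bounded on $[0,h_*]$, so all order constants are uniform as claimed.
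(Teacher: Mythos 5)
Your proof is correct and follows essentially the same route as the paper: reduce $\a,\b$ to the exponential via (EXP'), expand $\e^{hJ}$ and $\e^{hJ_m}$ by Duhamel relative to the linear parts $A$, $A_m$, estimate $\D B(U^0)-\D B_m(U^0)$ using $\|\Q_m\|_{\kY_\ell\to\kY}\le m^{-\ell}$ and the Lipschitz bound on $\D B$ from (B'), and close with Gronwall. Your version is in fact slightly more careful than the paper's (explicit integral representation for the $\varphi_k$ differences, the full three-term decomposition of $D-D_m$ including the $\P_m\D B(U^0)\Q_m$ piece), but the substance is identical.
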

\begin{proof}
Due to (EXP') it is enough to prove the first estimate.  Note that \eqref{e.etJ}
also holds for the Galerkin truncated system
%\begin{align*}
%  \e^{h J(U^0)} - \e^{h J_m(U^0)} =  \Q_m \e^{h J(U^0)} + \P_m ( \e^{h J(U^0)} - \e^{h J_m (U^0)})
 % \end{align*}
%  We have
\begin{align*}
 \e^{tJ_m(U^0)} = \e^{tA_m} + \int_0^t \e^{(t-s)A} \D B_m(U^0) \e^{s J_m(U^0)}\d s,
   \end{align*} 
 where $A= \P_m A$, Hence with $x(t) =  \|\e^{t J(U^0)} -  \e^{tJ_m(U^0)} \|_{\kY_\ell \to \kY}$ 
  \begin{align*}
  x(t) &\leq  \|\e^{tA} -  \e^{tA_m} \|_{\kY_\ell \to \kY}
  + \int_0^t \| (\D B(U^0) -\D B_m(U^0))\e^{s J(U^0)}\|_{\kY_\ell  \to \kY} \d s \\
  &\quad + \int_0^t \| \D B_m(U^0)\|_{\kY\to \kY} x(s)  \d s\\
&  \leq \| \Q_m\|_{\kY_\ell \to \kY} + \int_0^t \| (\D B(U^0) -\D B_m(U^0))\|_{\kY_\ell \to \kY} \e^{s M'_\ell} \d s   + M'_0\int_0^t   x(s)  \d s
  \end{align*}
  where we used Lemma \ref{l.ehJ} and (A2) and $M'_n=M'_n[R]$ for $n\in I$. From \eqref{eqn:proj_est} we know that $\| \Q_m\|_{\kY_\ell \to \kY}  = m^{-\ell}$. Moreover 
  \begin{align*}
  \| (\D B(U^0) -\D B_m(U^0))\|_{\kY_\ell \to \kY}
 & \leq  \|\P_m (\D B(U^0) -\D  B( \P_m U^0))\|_{\kY_\ell \to \kY}\\
 & \quad +  \| \Q_m \D B(U^0)\|_{\kY_\ell \to \kY} \\
 & \leq   M'' \| \Q_m U^0\|_\kY + m^{-\ell}M'_\ell = m^{-\ell}(M'' R+ M'_\ell )
  \end{align*}
  with $M'' = M''_0[R]$.
  Hence
  \begin{align}\label{e.x(t)}
  x(t) \leq  m^{-\ell} + \int_0^t  m^{-\ell}(M'' R+ M'_\ell ) \e^{s M'_\ell} \d s + M'_0\int_0^t   x(s)  \d s
  \end{align}
  and  by Gronwall's lemma
  \[
  \|\e^{t J(U^0)} -  \e^{tJ_m(U^0)} \|_{\kY_\ell \to \kY}= x(t) =\left( m^{-\ell} + m^{-\ell}(M'' R+ M'_\ell )\frac{(\e^{t M'_\ell}-1)}{M'_\ell}\right) \e^{t M'}.
  \]
\end{proof}

\begin{lemma}[Projection error of the exponential Rosenbrock method]
  Assume (A1), (A2), (B') and (EXP').  Then Lemma \ref{l.reg-proj-num-method} remains valid.
 \end{lemma}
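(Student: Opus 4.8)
The plan is to rerun the proof of Lemma~\ref{l.reg-proj-num-method} almost verbatim, the only genuinely new feature being that the operators $\e^{\c h A}$, $\a(hA)$ and $\b(hA)$ occurring there are now replaced by the $U^0$-dependent operators $\e^{\c h J(U^0)}$, $\a(hJ(U^0))$, $\b(hJ(U^0))$ and their Galerkin counterparts $\e^{\c h J_m(U^0)}$, $\a(hJ_m(U^0))$, $\b(hJ_m(U^0))$ with $J_m(U^0)=\P_m J(\P_m U^0)\P_m$. In contrast to the (EXP) setting these operators really do differ between the full and the projected system, and the corresponding projection errors are exactly what Lemma~\ref{l.proj_ahJ} supplies, while their uniform operator bounds come from Lemma~\ref{l.ehJ} and Lemma~\ref{l.ahA-rosenbrock}. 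The regularity statement (that Theorem~\ref{thm:num_method_regularity}, with $N$ replaced by $N-1$, applies to $W_m$ and $\Psi_m$ uniformly in $m\geq 0$) is immediate from its proof, as in Lemma~\ref{l.reg-proj-num-method}. I would first record that, running this contraction argument on the scale $\kY_\ell$, the full stage vector obeys $W(U,h)\in\kB^R_\ell$ whenever $U\in\kB^{R/2}_\ell$; together with (B') this gives $G(W,U)=B(W)-\D B(U)W\in\kY^s_\ell$ with a bound uniform in $h\in[0,h_*]$, which will be used throughout.

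For \eqref{e.wwm} I would subtract \eqref{e.WRosen} for the full and for the projected method and sort the result into three families. The initial-value contribution $\e^{\c h J(U)}\1 U-\e^{\c h J_m(U)}\1\P_m U$ I would write as $(\e^{\c h J(U)}-\e^{\c h J_m(U)})\1 U+\e^{\c h J_m(U)}\1\Q_m U$; the first summand is $\kO(m^{-\ell})$ by Lemma~\ref{l.proj_ahJ}, the second by the bound of Lemma~\ref{l.ehJ} and $\norm{\Q_m U}{\kY}\leq m^{-\ell}\norm{U}{\kY_\ell}$ from \eqref{eqn:proj_est}. The nonlinear contribution $h[\a(hJ(U))G(W,U)-\a(hJ_m(U))G_m(W_m,U)]$ splits as $h[\a(hJ(U))-\a(hJ_m(U))]G(W,U)+h\,\a(hJ_m(U))[G(W,U)-G_m(W_m,U)]$, the first piece being $\kO(m^{-\ell})$ by Lemma~\ref{l.proj_ahJ} and the $\kY_\ell$-bound on $G(W,U)$. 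The core of the estimate is the nonlinearity difference $G(W,U)-G_m(W_m,U)=[B(W)-\P_m B(W_m)]-[\D B(U)W-\P_m\D B(\P_m U)W_m]$ (using $\P_m W_m=W_m$), whose two brackets I would expand as $B(W)-\P_m B(W_m)=\Q_m B(W)+\P_m[B(W)-B(W_m)]$ and $\D B(U)W-\P_m\D B(\P_m U)W_m=\Q_m\D B(U)W+\P_m[\D B(U)-\D B(\P_m U)]W+\P_m\D B(\P_m U)[W-W_m]$. Each $\Q_m$-term is $\kO(m^{-\ell})$ as before, the term $\P_m[\D B(U)-\D B(\P_m U)]W$ is $\kO(m^{-\ell})$ since $\norm{\D B(U)-\D B(\P_m U)}{\kY\to\kY}\leq M''\norm{\Q_m U}{\kY}$, and the two remaining terms are bounded by $C\norm{W-W_m}{\kY^s}$. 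Collecting the pieces yields $\norm{W-W_m}{\kY^s}\leq\kO(m^{-\ell})+h\,C'\norm{W-W_m}{\kY^s}$, so absorbing the last term for $h_*$ small gives \eqref{e.wwm}.

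For \eqref{e.psipsim} I would apply the same three-family decomposition to the difference of \eqref{e.U1Rosen} for the two systems: the $\e^{hJ}$-term is treated by Lemma~\ref{l.proj_ahJ} and \eqref{eqn:proj_est} exactly as above, the factor $h[\b^T(hJ(U))-\b^T(hJ_m(U))]G(W,U)$ by Lemma~\ref{l.proj_ahJ} and the $\kY_\ell$-bound on $G(W,U)$, and the remaining term $h\,\b^T(hJ_m(U))[G(W,U)-G_m(W_m,U)]$ by the expansion of the previous paragraph, this time invoking the already-proved \eqref{e.wwm} to estimate the Lipschitz terms by $\kO(m^{-\ell})$. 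This establishes \eqref{e.psipsim}.

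The main obstacle is the nonlinearity difference $G(W,U)-G_m(W_m,U)$. Because $G$ carries the derivative term $\D B(U^0)U$, its decomposition uses the additional smoothness of (B') and the second-derivative bound $M''=M''_0[R]$ to control $\norm{\D B(U)-\D B(\P_m U)}{\kY\to\kY}$, and it relies on the regularity fact $W(U,h)\in\kB^R_\ell$ so that $B(W)$ and $\D B(U)W$ lie in $\kY_\ell$ and their $\Q_m$-projections are $\kO(m^{-\ell})$. Once this expansion is set up, the projection errors of the operator functions are entirely absorbed by Lemma~\ref{l.proj_ahJ}, so no analytic input beyond the lemmas already proved is required.
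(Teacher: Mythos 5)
Your proposal is correct and follows essentially the same route as the paper: the same three-way splitting of the differences of \eqref{e.WRosen} and \eqref{e.U1Rosen} into operator projection errors (handled by Lemma~\ref{l.proj_ahJ}), $\Q_m$-terms (handled by \eqref{eqn:proj_est} and the $\kY_\ell$-bound on $G$), and Lipschitz terms absorbed by a small-$h_*$ contraction argument. The only difference is cosmetic — you expand $G(W,U)-G_m(W_m,U)$ slightly more explicitly, in particular isolating the $\D B(U)-\D B(\P_m U)$ term via $M''\norm{\Q_m U}{\kY}$, where the paper packages the same estimate into the Lipschitz bound $\norm{\D_U G(U,\hat U)}{\kY\to\kY}\leq 2M'[R]$.
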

 \begin{proof}
 We have, with $J=J(U^0)$, $J_m=J_m(U^0)=\P_m J(\P_m U^0)$,
 \[
  \e^{h\c  J} \1 U - \e^{h\c J_m} \1 \P_m U =
   (\e^{h\c   J} \1 U - \e^{h\c  J } \1 \P_m U) +   
   (\e^{h\c  J } \1 \P_m U- \e^{h\c  J_m} \1 \P_m U)
 \]
 and so by Lemma  \ref{l.ehJ} and Lemma \ref{l.proj_ahJ} for $U\in \kB_\ell^{R/2}$
 \begin{align*} 
 \| \e^{\c h J} \1 U - \e^{h J_m} \1 \P_m U\|_{\kY^s} 
 &\leq \| \e^{\c h J}\|_{\kY^s\to \kY^s} \| \Q_m U\|_{\Y} 
 + \|\e^{h J }  - \e^{h J_m}\|_{\kY_\ell\to \kY} \|  \P_m U\|_{\Y_\ell}\\
 & \leq  \e^{h M'[R/2]} m^{-\ell}\frac{R}2 + \kO(m^{-\ell}) = \kO(m^{-\ell}).
 \end{align*}
 Similarly,   with 
 $G_m(U) = \P_m G(\P_m U)$,
  \begin{align*}
  \a(h J)G -& \a(h J_m) G_m  =  \a(h J)(G-G_m) +( \a(h J)- \a(hJ_m)) G_m\\
  & = \a(h J)(G-\P_m G) + \a(h A)( \P_m G-G_m)+( \a(h J)- \a(hJ_m)) G_m\
   \end{align*}
   so that by Lemma \ref{l.ahA-rosenbrock} and Lemma \ref{l.proj_ahJ}
   using \eqref{eqn:proj_est} and that for $U, \hat U \in \kB_\ell^R$ we have $\|G(U,\hat U)\|_{\kY_\ell} 
   \leq M_\ell + M'_\ell  R$ (with $M_\ell=M_\ell[R]$, $M'_\ell=M'_\ell[R]$) we get for $W=W(U^0)$, $W_m = W_m(U^0)$, $U^0 \in \kB_\ell^{R/2}$,
 \begin{align*} 
\| \a(h J)G(W) - \a(h J_m) G_m(W_m)\|_{\kY^s} & \leq \| \a(h J)\|_{\kY^s\to \kY^s}  (M_\ell + M'_\ell R )m^{-\ell} \\
& \quad + \| \a(h J)\|_{\kY^s\to \kY^s} \| G(W) - G(W_m)\|_{\kY^s}  \\
& \quad+\| \a(h J)- \a(hJ_m)\|_{\kY_\ell^s\to \kY^s} (M_\ell + M'_\ell R )\\
& \leq \kO(m^{-\ell})  + M_{\a,0}[R] \| G(W) - G(W_m)\|_{\kY^s}.
  \end{align*}
  Hence, from $\| \D_U G(U,\hat U)\|_{\kY\to \kY} =  \| \D_U B(U)- \D_U B(\hat U)\|_{\kY\to \kY}\leq 2M'[R]$ for $U,\hat U\in \kB_0^R$  we get, using \eqref{e.WRosen} for both $W$ and $W_m$, similarly as in \eqref{e.WWm-est},
  \[
  \| W-W_m\|_{\kY^s} \leq \kO(m^{-\ell})  + 2h M_{\a,0}[R] M'_0[R]\| W-W_m\|_{\kY^s}
  \]
  which for $h\in [0,h_*]$ with $h_*$ small  enough shows \eqref{e.wwm} for exponential Rosenbrock methods. 
 For $\b(hJ)$ we obtain in the same way that 
 \begin{align*} 
\| \b(h J)G - \b(h J_m) G_m\|_{\kY} &  \leq 
 \kO(m^{-\ell})  + M_{\b,0}[R] \| G(W) - G(W_m)\|_{\kY^s} \\
 & \leq  \kO(m^{-\ell})  + 2M_{\b,0}[R] M'_0[R] \| W-W_m\|_{\kY^s}.
 \end{align*}
 Using \eqref{e.U1Rosen} for both $\Psi$ and $\Psi_m$  and    \eqref{e.wwm} for exponential Rosenbrock methods gives \eqref{e.psipsim} for exponential Rosenbrock methods as well.
 \end{proof}
 
 \begin{lemma}[$m$ dependent bounds on $W_m$   for exponential Rosenbrock methods]
 Lemma \ref{lem:wm-m-dep-deriv} remains true for exponential Rosenbrock methods if (A1), (A2), (B') and (EXP') hold.
 \end{lemma}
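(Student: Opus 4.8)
The plan is to transcribe the proof of Lemma~\ref{lem:wm-m-dep-deriv} almost verbatim, making the substitutions $A\rightsquigarrow J_m(U^0)=\P_m J(\P_m U^0)\P_m=A_m+\D B_m(U^0)$, $\a(hA)\rightsquigarrow\a(hJ_m(U^0))$, and $\P_m B(W_m)\rightsquigarrow G_m(W_m,U^0)=B_m(W_m)-\D B_m(U^0)W_m$, so that the projected stage equation becomes
\begin{equation*}
 W_m=\exp(\c h J_m(U^0))\1\,\P_m U^0+h\,\a(h J_m(U^0))\,G_m(W_m,U^0).
\end{equation*}
Differentiating $k$ times in $h$ gives, exactly as in \eqref{e.derivwmInduction}, a first term $\partial_h^{k}\big(\exp(\c hJ_m(U^0))\P_m\big)U^0\1$ plus a Leibniz sum over $\binom{k}{j}\partial_h^{k-j}\big(h\,\a(hJ_m(U^0))\P_m\big)\,\partial_h^{j}G_m(W_m,U^0)$. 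Everything after these operator factors have been estimated---the splitting into $j\le\ell$ and $j>\ell$ as in \eqref{e.jLessEll-W}--\eqref{e.jGreaterEll-W}, the Fa\`a di Bruno expansion \eqref{e.Bwm}, the absorption of the self-referential last term as in \eqref{e.derivwmest} for $h_*$ small, and the induction on $k$ from $\lceil\ell\rceil$ to $N-1$---is unchanged, so I would only verify the two operator estimates and the nonlinearity bookkeeping.

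The single genuinely new ingredient is the Rosenbrock replacement for \eqref{e.firstterm} and \eqref{e.k-jDerivha}, namely the bound $\|J_m^{n}(U^0)\P_m\|_{\kY_\ell^s\to\kY^s}=\kO(m^{n-\ell})$, uniformly in $m$ and in $U^0\in\kB_\ell^{R/2}$. I would prove it by expanding $J_m=A_m+\D B_m(U^0)$ and noting that the leading monomial $A_m^{n}=(\P_m A)^n$ satisfies $\|A_m^{n}\P_m\|_{\kY_\ell\to\kY}\le\|A^{n}\|_{\kY_n\to\kY}\|\P_m\|_{\kY_\ell\to\kY_n}\le m^{n-\ell}$ by \eqref{e.Y_ellCompare} and \eqref{eqn:proj_est}, exactly as in \eqref{e.firstterm}, whereas any monomial containing at least one factor $\D B_m(U^0)$ is $\kO(m^{n-1-\ell})$: the perturbation $\D B_m(U^0)$ is $m$-uniformly bounded on $\kY$ at regularity level $0$, with $\|\D B_m(U^0)\|_{\kY\to\kY}\le M'_0[R]$, so the $\kY_\ell$-smoothness can be cashed in only through the topmost block of consecutive $A_m$-factors. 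Hence the $m$-scaling is governed solely by the highest $A_m$-power and equals $\kO(m^{n-\ell})$. Combining this with the $m$-uniform bounds on $\exp(\c hJ_m(U^0))$ and $\varphi_k^{(n)}(\lambda_k hJ_m(U^0))$ in $\kY\to\kY$ (from Lemma~\ref{l.ehJ} and the integral representation of $\varphi_k^{(n)}$ used in Lemma~\ref{l.J^j}, valid for $J_m$ uniformly in $m$ as in Lemma~\ref{l.proj_ahJ}) together with the representation of $\a,\b$ through the $\varphi_k(\lambda_k z)$ afforded by (EXP') yields $\|\partial_h^{n}\big(h\,\a(hJ_m(U^0))\big)\P_m\|_{\kY_{\ell-j}^s\to\kY^s}=\kO(m^{n-\ell})$ with $n=k-j$, and the analogue for $\b$---precisely the Rosenbrock forms of \eqref{e.firstterm} and \eqref{e.k-jDerivha}.

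Since $U^0$ is frozen in $h$, the $h$-derivatives of $G_m(W_m(U,h),U^0)$ see only the $h$-dependence of $W_m$; the Fa\`a di Bruno coefficients are $\D^{\beta}_wG_m=\D^{\beta}_wB_m$ for $\beta\ge2$ and $\D_wG_m=\D B_m(W_m)-\D B_m(U^0)$, still $m$-uniformly bounded (by $2M'_0[R]$), so \eqref{e.Bwm} and the absorption argument go through with $B$ replaced by $G$. The $\kY_{\ell-j}$-regularity of $W_m$ and of $\partial_h^jG_m(W_m,U^0)$ needed in the case $j\le\ell$ comes from the Rosenbrock versions of Theorem~\ref{thm:num_method_regularity} and Lemma~\ref{l.reg-proj-num-method}, using $\D B_m(U^0)\colon\kY_{\ell-j}\to\kY_{\ell-j}$ boundedly, which is valid because $U^0\in\kY_\ell$ and $\ell-j\in I$ by $\ell\in I^-$. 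The main obstacle is exactly the operator estimate of the second paragraph: because $J(U^0)$ is not normal, $\partial_h^{n}\a(hJ_m)$ cannot be read off from functional calculus as in Lemma~\ref{l.ahA}/Lemma~\ref{l.ahA-rosenbrock}, and the delicate point is to split the $m$-gain correctly---routing the $\kY_\ell$-smoothness through the $A_m$-block while treating the perturbation $\D B$ at level $0$ so that it stays $m$-uniformly bounded and strictly lower order. Here (B') supplies the one extra derivative that lets the induction reach order $N-1$, and (EXP') is what makes the derivative bounds on $\a(hJ_m),\b(hJ_m)$ $m$-uniform.
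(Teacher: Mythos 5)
Your proposal is correct and follows the paper's overall strategy — transcribe the proof of Lemma \ref{lem:wm-m-dep-deriv} with $A\rightsquigarrow J_m(U^0)$ and $\P_m B(W_m)\rightsquigarrow G_m(W_m,U^0)$, keeping the Leibniz expansion, the Fa\`a di Bruno bookkeeping and the induction intact — but you establish the one genuinely new ingredient, the operator estimate $\|J_m^k(U^0)\P_m\|_{\kY_\ell^s\to\kY^s}=\kO(m^{k-\ell})$, by a different decomposition. You expand $(A_m+\D B_m(U^0))^k$ into noncommuting monomials and argue that only the pure power $A_m^k$ contributes at top order; the paper instead keeps $J_m$ intact and factors $J_m^{k}=J_m^{\lfloor\ell\rfloor}\cdot J_m^{k-\lfloor\ell\rfloor}$, bounding the first factor $\kY_\ell\to\kY_{\ell-\lfloor\ell\rfloor}$ by Lemma \ref{l.J^j} and the second via $\|J_m^{k-\lfloor\ell\rfloor-1}\|_{\kY\to\kY}\,\|J_m\|_{\kY_{\ell-\lfloor\ell\rfloor}\to\kY}=\kO(m^{k-\lfloor\ell\rfloor-1})\,\kO(m^{1-(\ell-\lfloor\ell\rfloor)})$. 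Both routes give $\kO(m^{k-\ell})$; the paper's factorization avoids the monomial combinatorics and reuses Lemma \ref{l.J^j} directly, while yours makes the perturbative structure more transparent. Two small imprecisions in your version, neither fatal: (i) for monomials in which a $\D B_m(U^0)$ factor is applied \emph{before} the block of $A_m$'s you cannot treat the perturbation ``at level $0$'' — you must let $\D B_m(U^0)$ act as a bounded map on the intermediate fractional space $\kY_{\ell-i}$ (legitimate since $\ell\in I^-$ and $U^0\in\kB^{R}_\ell$), otherwise the $\kY_\ell$-smoothness is destroyed before it can be cashed in against the remaining $A_m$-powers and those monomials are no longer lower order; (ii) the exponent in your bound for $\partial_h^{k-j}\bigl(h\,\a(hJ_m(U^0))\bigr)\P_m$ on $\kY^s_{\ell-j}$ should be $m^{(k-j)-(\ell-j)}=m^{k-\ell}$, not $m^{(k-j)-\ell}$; the former is what matches \eqref{e.k-jDerivha} and is what your argument actually delivers once applied with $\ell$ replaced by $\ell-j$. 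The remaining substitutions you list ($\D_wG_m$ bounded by $2M_0'[R]$, the use of (B') to reach order $N-1$, and (EXP') to reduce everything to $\varphi_k(\lambda_k hJ_m)$) coincide with the paper's.
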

 \begin{proof}
 We have to replace $A$ by $J_m=J_m(U)$ and $B(W_m)$ by $G=G(W_m, U)$ throughout the proof of  Lemma \ref{lem:wm-m-dep-deriv}.
 \eqref{e.firstterm} becomes for $\ell \in I^-$, $k\geq \ell$
 \begin{align*}
  \sup_{h\in [0,h_*]}&
  \| \partial _h^k(\exp(\c h J_m) \P_m \|_{\kY_\ell^s\to \kY^s}  =
  \sup_{h\in [0,h_*]} \|  \c^k J_m^k \exp(\c h  J_m) \P_m \|_{\kY_\ell^s\to \kY^s} \notag\\
  &\leq  \|  \c^k\|  \sup_{h\in [0,h_*]} \|\exp(\c h  J_m)\|_{\kY^s\to \kY^s} \|   J_m^k   \|_{\kY^s_\ell \to \kY^s} 
  \\
  & \leq  \|  \c^k\| \e^{ \|\c\| M' h_*}\|   J_m^{\lfloor \ell\rfloor}   \|_{\kY_\ell \to \kY_{\ell-\lfloor \ell\rfloor} }\|   J_m^{k-\lfloor \ell\rfloor} \|_{\kY_{\ell-\lfloor \ell\rfloor}\to \kY}=
   O( m^{k-\ell}).
   \end{align*}
    Here we used \eqref{e.J^j} and Lemma  \ref{l.ehJ} with $J$ replaced by $J_m$ noting that the proof holds true with the same bounds. Moreover we used that for $k\geq  \ell$
   \begin{align*}
\|   J_m^{k-\lfloor \ell\rfloor} \|_{\kY_{\ell-\lfloor \ell\rfloor}\to \kY} & 
 \leq   \|   J_m^{k-\lfloor \ell\rfloor-1} \|_{\kY\to \kY}
 \|   J_m \|_{\kY_{\ell-\lfloor \ell\rfloor}\to \kY}
 = O( m^{k-\lfloor \ell\rfloor-1})O(m^{1-(\ell-\lfloor \ell\rfloor)}) 
 \\
 & =  O( m^{k-\ell}).
   \end{align*}
  We replace the bound $M_{h,\a}^{(k-j)}$ by $M_{h,\a,k-j}^{(k-j)}[R]$ in \eqref{e.k-jDerivha}, \eqref{e.jLessEll-W}  and by $M_{h,\a,0}^{(k-j)}[R]$ in \eqref{e.jGreaterEll-W} and  replace the bound $M_\a$      by $M_{\a,0}[R]$ in \eqref{e.derivwmest}.
 Then \eqref{e.k-jDerivha} holds true with $A$ replaced by $J_m$  by Lemma \ref{l.ahA-rosenbrock}
 which also applies to $J_m$ with the same bounds.  In the analogue of \eqref{e.Bwmderivest} we have to replace $M_0'[R]$ by $2M_0'[R]$ since this is the required bound for $\D G(W_m(U,h), U)$.
 \end{proof}

  \begin{lemma}[$m$ dependent bounds on $\Psi_m$   for exponential Rosenbrock methods]
 Lemma \ref{lem:psim-m-dep-deriv} remains true for exponential Rosenbrock methods if (A1), (A2), (B') and (EXP') hold.
 \end{lemma}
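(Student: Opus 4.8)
The plan is to transcribe the proof of Lemma~\ref{lem:psim-m-dep-deriv} essentially line by line, making the same two substitutions used in the preceding lemma for the stage vector $W_m$: replace the linear operator $A$ by $J_m = J_m(U) = \P_m J(\P_m U)\P_m$, and replace the nonlinearity $B(W_m)$ by $\P_m G(W_m(U,h),U)$ throughout. With the natural Rosenbrock analogue $\hat\Psi_m := \Psi_m - \e^{hJ_m}\P_m$, differentiating \eqref{e.U1Rosen} gives the counterpart of \eqref{e.dkhPsi_m},
\[
\partial_h^k \hat\Psi_m = \sum_{j=0}^k \binom{k}{j}\,\partial_h^{k-j}\bigl(h\b^T(hJ_m)\bigr)\,\partial_h^j \P_m G(W_m(U,h),U),
\]
so that the task reduces to reproducing the two key ingredients of the original proof — the smoothing estimate \eqref{e.k-jDerivhb} for $h\b^T(hJ_m)$ and its $h$-derivatives, and the $\partial_h^j$-bounds on the nonlinearity — in the Rosenbrock setting. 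The regularity assertion $\Psi_m(U,\cdot)\in\kC_b^{k}([0,h_*];\kB_0^R)$ follows from the Rosenbrock version of Theorem~\ref{thm:num_method_regularity} (valid with $N$ replaced by $N-1$ under (B') and (EXP')) together with the corresponding stage-vector regularity.

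First I would establish the analogue of \eqref{e.k-jDerivhb}. As in Lemma~\ref{l.ahA} one has $\partial_h^n\bigl(h\b^T(hJ_m)\bigr) = n J_m^{n-1}\b^{(n-1)}(hJ_m) + h J_m^{n}\b^{(n)}(hJ_m)$; Lemma~\ref{l.ahA-rosenbrock} controls $\b^{(n)}(hJ_m)$ with the $\ell$-dependent constants $M_{\b,\ell}^{(n)}[R]$, $M_{h,\b,\ell}^{(n)}[R]$ (it applies to $J_m$ with the same bounds), and the powers of $J_m$ are handled exactly as the first term was in the preceding Rosenbrock lemma: chaining $J_m^{\lfloor\ell\rfloor}\colon\kY_\ell\to\kY_{\ell-\lfloor\ell\rfloor}$ and $J_m^{\,n-\lfloor\ell\rfloor}\colon\kY_{\ell-\lfloor\ell\rfloor}\to\kY$ through \eqref{e.J^j} and the $\P_m$-smoothing of \eqref{eqn:proj_est}. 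This yields $\|\partial_h^n\bigl(h\b^T(hJ_m)\P_m\bigr)\|_{\kY_\ell^s\to\kY} \le \kO(hm^{n-\ell}) + \kO(m^{[n-\ell-1]_+})$, tracking the extra factor of $h$ on the $J_m^{n}\b^{(n)}$ summand.

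Next I would split the sum over $j$ at $j=\ell$, precisely as in \eqref{e.jLessEll} and \eqref{e.j>Ell}. For $j\le\ell$, $j<k$, I combine the above estimate (with $\ell$ replaced by $\ell-j$) with the stage-vector regularity $W_m^i(U,\cdot)\in\kC_b^{j}([0,h_*];\kB_{\ell-j}^R)$ — the analogue of \eqref{eqn:proj_wm_indep} furnished by the already-proved Rosenbrock version of Lemma~\ref{lem:wm-m-dep-deriv} — using that $G$ is $C^1$ on every $\kY_{\ell-j}$ under (B'). For $k>j\ge\ell$ I invoke the $\kO(m^{j-\ell})$ bound on $\partial_h^j \P_m G(W_m(U,h),U)$, which is the Rosenbrock counterpart of \eqref{e.derivBmbound}. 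Summing the two families of contributions reproduces \eqref{e.deriv-hatpsim-bound}, with the order constants now additionally depending on $\ell$ and $R$ through Lemmas~\ref{l.ehJ}, \ref{l.J^j} and \ref{l.ahA-rosenbrock}.

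The main obstacle is the analogue of \eqref{e.k-jDerivhb}: since $J_m$ is not normal and does not commute with $\P_m$, one cannot read off the smoothing estimate by functional calculus as for $\b(hA)$. Instead one must carefully chain the mapping bounds \eqref{e.J^j} for the powers $J_m^{n}$ across the scale $\{\kY_{\ell-n}\}$ and splice them with Lemma~\ref{l.ahA-rosenbrock} and the $\P_m$-smoothing of \eqref{eqn:proj_est}, keeping track of how the two summands $nJ_m^{n-1}\b^{(n-1)}$ and $hJ_m^{n}\b^{(n)}$ distribute the powers of $m$ and the factor of $h$. Once this estimate is in hand the remaining steps are a routine, term-by-term transcription of the proof of Lemma~\ref{lem:psim-m-dep-deriv}.
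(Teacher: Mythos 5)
Your proposal is correct and follows essentially the same route as the paper, whose own proof is just the one-line instruction to substitute $A\mapsto J_m(U)$ and $B(W_m(U,h))\mapsto G(W_m(U,h),U)$ throughout the proof of Lemma~\ref{lem:psim-m-dep-deriv} and to replace the constants $M_\b^{(i)}$ in \eqref{e.k-jDerivhb} by $M_{\b,i}^{(i)}[R]$. Your additional detail on the non-normality of $J_m$ and the chaining of $\|J_m^n\|$ across the scale via Lemma~\ref{l.J^j} is exactly the mechanism the paper delegates to Lemmas~\ref{l.ahA-rosenbrock} and \ref{l.J^j} (and to the preceding stage-vector lemma), so it is a faithful, more explicit rendering of the same argument.
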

 \begin{proof}
 The proof of Lemma \ref{lem:psim-m-dep-deriv} remains valid if we replace $A$ by $J_m(U)$ and  $B(W_m(U,h))$ by $G(W_m(U,h), U) $ throughout the proof  and in \eqref{e.k-jDerivhb} replace $M_{\b}^{(i)}$ by $M_{\b,i}^{(i)}[R]$, $i=n-1,n$.
  \end{proof}
 
 \begin{theorem}[Trajectory error for nonsmooth data  of exponential Rosenbrock methods]  Theorem \ref{thm:expInt_convergence}     remains  true  for exponential Rosenbrock methods under conditions (A1), (A2), (B') and (EXP').
 \end{theorem}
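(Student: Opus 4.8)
The plan is to replay, essentially verbatim, the three-step argument of Subsection~\ref{ss.err_nonsmooth}, substituting at each occurrence the exponential-Rosenbrock analogue of the supporting result. All the flow-side ingredients are untouched, since they concern only the exact semiflow $\Phi_m$ and not the integrator: Theorem~\ref{t.semiflow}, Lemma~\ref{l.see_proj_err}, and the derivative bounds of Lemma~\ref{l.phimDerivs} for $\Phi_m$ and $\hat\Phi_m$ continue to hold under (A1), (A2), (B'). On the method side, every estimate used in the proof has just been re-established for the Rosenbrock scheme: the projection errors \eqref{e.wwm} and \eqref{e.psipsim} (Rosenbrock version of Lemma~\ref{l.reg-proj-num-method}), the $m$-dependent bounds for $W_m$ (Rosenbrock version of Lemma~\ref{lem:wm-m-dep-deriv}), the $m$-dependent bound \eqref{e.deriv-hatpsim-bound} for $\hat\Psi_m$ (Rosenbrock version of Lemma~\ref{lem:psim-m-dep-deriv}), and the regularity of the integrator (Theorem~\ref{thm:num_method_regularity} with $N$ replaced by $N-1$). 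The strategy is thus to check that, granted these replacements, the recursions and the balancing $m(h)=h^{-1}$ carry over without change.

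Steps~1 and~3 are then routine. Step~1 (regularity of the Galerkin-truncated semiflow, giving the uniform bound \eqref{e.hatC_ell} with constant $r_\phi$) is literally unchanged, as it invokes only the exact flow. For Step~3 I would reproduce the recursion \eqref{e.en}--\eqref{e.enRecursion} for $e^n=\|(\Psi^h)^n(U^0)-(\Psi^h_{m})^n(U^0)\|_\kY$: the one-step error \eqref{e.e1}--\eqref{e.e1Est} is controlled by the Rosenbrock projection-error estimates \eqref{e.wwm} and \eqref{e.psipsim}, and the contraction constant $1+\sigma_\Psi h$ by the Lipschitz bound \eqref{e.DPsimEst} on $\D_U\Psi$, which remains valid because $\Psi$ and $W$ are again $\kCb^1$ with uniform constants and $\D_U W$ is uniformly bounded. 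This yields $e^n=\kO(m^{-\ell})$, and combining with Lemma~\ref{l.see_proj_err} and the Step-2 bound \eqref{e.glob-ge_m} through the triangle inequality produces $\|\Phi^{nh}(U^0)-(\Psi^h)^n(U^0)\|_\kY=\kO(h^\ell)$, i.e.\ \eqref{e.globTrajEst} with $\ell\le p$.

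The main obstacle, and the only place requiring genuine care, is the local-error decomposition in Step~2. For an exponential Runge--Kutta method both $\Phi^h_m$ and $\Psi^h_m$ carry the \emph{same} exact linear part $\e^{hA_m}\P_m$, so that $\Phi^h_m-\Psi^h_m=\hat\Phi^h_m-\hat\Psi^h_m$ and the favourable $m$-dependence of \eqref{e.helpMain} is inherited at once. For a Rosenbrock method the integrator's linear part is $\e^{hJ_m(U)}\P_m$ rather than $\e^{hA_m}\P_m$, so this cancellation is not immediate. I would resolve this by keeping the definition $\hat\Psi_m=\Psi_m-\e^{hA_m}$ and writing the discrepancy via the variation-of-constants identity \eqref{e.etJ} for the projected operators,
\[
\e^{hJ_m(U)}\P_m-\e^{hA_m}\P_m=\int_0^h\e^{(h-s)A_m}\,\D B_m(U)\,\e^{sJ_m(U)}\P_m\,\d s .
\]
This term solves the same evolution equation as $\hat\Phi_m$, but with the \emph{bounded} operator $\D B_m(U)$ in place of the nonlinearity. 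Since $\D B_m(U)$ maps each $\kY_{\ell-j}$ to itself with a uniform bound and $J_m^{\,j}$ obeys the mapping estimates of Lemma~\ref{l.J^j}, the $A_m^{\,k}$/Fa\`a di Bruno computation of Lemma~\ref{l.phimDerivs} applies verbatim and gives this extra term the bound $\kO(hm^{p+1-\ell})+\kO(m^{[p-\ell]_+})$. Hence $\hat\Psi_m=\Psi_m-\e^{hA_m}$ satisfies \eqref{e.deriv-hatpsim-bound} with the same right-hand side as in the exponential-RK case, so that the local-error recursion \eqref{e.mainComp} and the balancing $m(h)=h^{-1}$ reproduce \eqref{e.glob-ge_m} unchanged.

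Finally I would record the bookkeeping of the smoothness budget: the differentiated nonlinearity $G(U,U^0)=B(U)-\D B(U^0)U$ costs one derivative of $B$, which is precisely why (B') strengthens (B) to $N>\lceil L\rceil+1$ and why the regularity statement holds with $N$ replaced by $N-1$. The number of $h$-derivatives needed for the order-$p$ Taylor remainder in Step~2 is therefore still available, and with all three steps in place \eqref{e.globTrajEst} follows exactly as in Theorem~\ref{thm:expInt_convergence}.
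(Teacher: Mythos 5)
Your proposal is correct and follows the same overall route as the paper: replay the three-step argument of Subsection~\ref{ss.err_nonsmooth}, substituting the Rosenbrock versions of the supporting lemmas. Two points of detail differ. First, the paper's proof is silent on the mismatch between the integrator's linear part $\e^{hJ_m(U)}$ and the $\e^{hA_m}\P_m$ subtracted in $\hat\Phi_m$; you single this out as the main obstacle and resolve it by bounding $\partial_h^{p+1}\bigl(\e^{hJ_m(U)}\P_m-\e^{hA_m}\P_m\bigr)$ through the variation-of-constants identity \eqref{e.etJ}, arriving at the same $\kO(hm^{p+1-\ell})+\kO(m^{[p-\ell]_+})$ bound as for $\hat\Psi_m$ — a genuine and welcome addition that the paper leaves implicit in the assertion that Lemma~\ref{lem:psim-m-dep-deriv} ``remains true'' with $A$ replaced by $J_m(U)$. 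Second, and conversely, the one modification the paper does spell out is the Lipschitz bound \eqref{e.DPsimEst}: for a Rosenbrock method $\D_U\Psi^h_m$ acquires the extra terms involving $\D_U\e^{hJ_m(U)}$ and $\D_U\b(hJ_m(U))$, and the essential form $1+\sigma_\Psi h$ (rather than merely a constant larger than $1$, which would destroy the Gronwall recursion over $n\le T/h$ steps) rests on Lemma~\ref{l.D_UJ}, specifically on $\|\e^{hJ_m}\|_{\kY\to\kY}\le\e^{M'_0h}$ and on $P_j(0)=0$ so that the $U$-derivative terms are $\kO(h)$. Your stated justification that ``$\Psi$ and $W$ are again $\kCb^1$ with uniform constants'' does not by itself yield this; you should invoke Lemma~\ref{l.D_UJ} at that point. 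With that small repair the argument is complete and matches the paper's.
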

 \begin{proof}
 Most of the proof of  Theorem \ref{thm:expInt_convergence} carries over
In the analogue of \eqref{e.DPsimEst}  we   use Lemma \ref{l.D_UJ}  to obtain for $U \in \kB_0^{2r_\phi}$, 
\begin{align*}
 \| \D_U\Psi_m^h(U)\|_{\kY\to\kY}  & \leq   \|\exp(hJ_m)\|_{ \kY\to \kY} + \| \D_U\e^{h J_m}\|_{ \kY\to \kY} \|U\|_\kY \\
 & \quad + h \|\D_U \b(hJ_m )\|_{\kY^s\to \kY}2M_0 + h  M_{\b,0}  M'_0   \| \D_U W_m(U)\|_{ \kY\to\kY^s}
\\
&  =: 1 + \sigma_\Psi[2r_\phi] h
\end{align*}
where constants are evaluated with radius $R=4 r_\phi $.
In \eqref{e.e1} and \eqref{e.e1Est} we have to replace $B(W(U,h))$ by $G(W(U,h))$, and in \eqref{e.e1Est} also $M_\b$  by $M_{\b,0}$ and $M'_0$ by $2M'_0$.  
\end{proof}

%%%%%%%%%%%%%%%%%%%%%%%%%%%%%%%%%%%%%%%%%%%%%%%%%%%%%%%%%%%%%%%%%%%%%%%%%%%%

%\begin{remark}
%\fbox{maybe to do:} exponential Euler-Rosenbrock order 2 result also applies.
%\end{remark}

\section*{Acknowledgement}
I would like to thank Andrew Sadler for preparing the code to test the accuracy 
of the exponential Euler method for non-smooth initial data.

%%%%%%%%%%%%%%%%%%%%%%%%%%%%%%%%%%%%%%%%%%%%%%%%%%%%%%%%%%%%%%%%%%%%%%%%%%%%%%%%%%%%%%%%%%%%%%%%%%%


\begin{thebibliography}{99}

\bibitem{Owren} {\sc H.  Berland, B.  Owren, B.  Skaflestad} (2005).
{\em B-series and order conditions for exponential
integrators,}
SIAM J. Numer. Anal.  43(4), pp. 1715-1727.

\bibitem{FaadiBruno} {\sc G.M. Constantine, T.H. Savits} (1996).
 {\em A multivariate
  Fa\`a di Bruno formula with applications, } Trans. Amer. Math. Soc. 348,
 pp. 503--520.
  
\bibitem{Ostermann2014}{\sc L. Einkemmer, A. Ostermann} (2014).
 {\em Convergence analysis of Strang splitting for
Vlasov-type equations,} SIAM J. Numer. Anal.   52(1), pp. 140-155. 

\bibitem{Gauckler2015}{\sc L. Gauckler} (2015).
{\em Error analysis of trigonometric integrators
for semilinear wave equations,}
SIAM J. Numer. Anal., 53(2),  pp. 1082-1106.

\bibitem{Gauckler2017} 
{\sc L. Gauckler} (2018).
{\em On a splitting method for the Zakharov system,}
Numerische Mathematik  139(2), pp. 349-379.


\bibitem{SchweitzerHochbruck}
{\sc M. Hochbruck, A. Ostermann,  J. Schweitzer} (2009).
{\em Exponential Rosenbrock-Type Methods,}
SIAM Numer. Anal.  47(1), pp. 786-803.

\bibitem{Hochbruck}
{\sc M. Hochbruck and A. Ostermann} (2010). {\em Exponential integrators,}
Acta Numer. 19, pp. 209--286.
 
\bibitem{Ostermann_expBseries}
{\sc V.T. Luan, A. Ostermann} (2013). {\em Exponential B-series: the stiff case,}
SIAM J. Numer. Anal. 51, pp. 3431--3445.

\bibitem{LubichNSE} 
{\sc C. Lubich} (2008).
{\em On splitting methods for Schr{\"o}dinger-Poisson and cubic nonlinear Schr{\"o}dinger
equations,} Math. Comp. 77, pp. 2141--2153.

\bibitem{OW10A} {\sc M. Oliver, C. Wulff} (2012).  
{\em A-stable Runge--Kutta
    methods for semilinear evolution equations}, J. Functional Anal. {263}, pp. 1981--2023.
    
    \bibitem{OstermannBoussinesq2019}
  {\sc  A. Ostermann, C. Su} (2019).
   {\em  Two exponential-type integrators for the “good” Boussinesq equation}.
    Numer. Math. 143(3), pp. 683–712.


\bibitem{Pazy}{\sc A. Pazy} (1983). \emph{Semigroups of Linear Operators and
    Applications to Partial Differential Equations}, Springer-Verlag,
  New York.
  
\bibitem{Schweitzer} {\sc J. Schweitzer} (2015).
{\em The exponential Rosenbrock-Euler method for
nonsmooth initial data}, Preprint.

\bibitem{ConvAnalysisHigherOrdSplitSchroe2012}
{\sc M. Thalhammer} (2012).
{\em Convergence analysis of high-order time-splitting
pseudospectral methods for nonlinear Schr{\"o}dinger
equations,}
SIAM J. Numer. Anal. 50(6), pp. 3231-3258.  
    
\bibitem{Wulff2016}
{\sc C. Wulff, C. Evans } (2016).
{\em Runge-Kutta time semidiscretizations of semilinear PDEs with non-smooth data},
 Numer. Math. 134(2), pp. 413-440. 
 

\end{thebibliography}
\end{document}